\DeclareMathOperator{\esssup}{\ensuremath{ess\,sup}}
\def\sqr#1#2{{\vcenter{\vbox{\hrule height.#2pt
				\hbox{\vrule width.#2pt height#1pt \kern#1pt \vrule width.#2pt}
				\hrule height.#2pt}}}}
\newcommand{\norm}[1]{\left\Vert#1\right\Vert}
\def\sqr#1#2{{\vcenter{\vbox{\hrule height.#2pt
				\hbox{\vrule width.#2pt height#1pt \kern#1pt \vrule width.#2pt}
				\hrule height.#2pt}}}}
\begin{document}

\newtheorem{thm}{Theorem}[section]
\newtheorem{lem}[thm]{Lemma}
\newtheorem{remark}[thm]{Remark}
\newtheorem{pf}{proof}
\newtheorem{assumption}[thm]{Assumption}
\newtheorem{prop}[thm]{Proposition}
\newtheorem{defn}[thm]{Definition}
\newtheorem{cro}[thm]{Corollary}
\newtheorem{example}[thm]{Example}
\newcommand{\tabincell}[2]{\begin{tabular}{@{}#1@{}}#2\end{tabular}}
\newcommand{\mathbbm}[1]{\text{\usefont{U}{bbm}{m}{n}#1}}
\let\oldnorm\norm   
\let\norm\undefined 
\DeclarePairedDelimiter\norm{\lVert}{\rVert}
\hypersetup{colorlinks,%
	citecolor=blue,%
	filecolor=blue,%
	linkcolor=blue,%
	urlcolor=blue
}

\newcommand{\E}{\mathbb{E}}
\newcommand{\EE}{\mathbb{E}}
\newcommand{\W}{\dot{W}}
\newcommand{\ud}{\ensuremath{\mathrm{d}}}
\newcommand{\Ceil}[1]{\left\lceil #1 \right\rceil}
\newcommand{\Floor}[1]{\left\lfloor #1 \right\rfloor}

\newcommand{\Lad}{\text{L}_{\text{ad}}^2}
\newcommand{\SI}[1]{\mathcal{I}\left[#1 \right]}
\newcommand{\SIB}[2]{\mathcal{I}_{#2}\left[#1 \right]}
\newcommand{\Indt}[1]{1_{\left\{#1 \right\}}}
\newcommand{\LadInPrd}[1]{\left\langle #1 \right\rangle_{\text{L}_\text{ad}^2}}
\newcommand{\LadNorm}[1]{\left|\left|  #1 \right|\right|_{\text{L}_\text{ad}^2}}
\newcommand{\Norm}[1]{\left|\left|  #1   \right|\right|}
\newcommand{\Ito}{It\^{o} }
\newcommand{\Itos}{It\^{o}'s }
\newcommand{\spt}[1]{\text{supp}\left(#1\right)}
\newcommand{\InPrd}[1]{\left\langle #1 \right\rangle}
\newcommand{\mr}{\textbf{r}}
\newcommand{\Ei}{\text{Ei}}
\newcommand{\arctanh}{\operatorname{arctanh}}
\newcommand{\ind}[1]{\mathbb{I}_{\left\{ {#1} \right\} }}

\newcommand{\steps}[1]{\vskip 0.3cm \textbf{#1}}
\newcommand{\calB}{\mathcal{B}}
\newcommand{\calD}{\mathcal{D}}
\newcommand{\calE}{\mathcal{E}}
\newcommand{\calF}{\mathcal{F}}
\newcommand{\calG}{\mathcal{G}}
\newcommand{\calK}{\mathcal{K}}
\newcommand{\calH}{\mathcal{H}}
\newcommand{\calI}{\mathcal{I}}
\newcommand{\calL}{\mathcal{L}}
\newcommand{\calM}{\mathcal{M}}
\newcommand{\calN}{\mathcal{N}}
\newcommand{\calO}{\mathcal{O}}
\newcommand{\calT}{\mathcal{T}}
\newcommand{\calP}{\mathcal{P}}
\newcommand{\calR}{\mathcal{R}}
\newcommand{\calS}{\mathcal{S}}
\newcommand{\bbN}{\mathbb{N}}
\newcommand{\bbZ}{\mathbb{Z}}
\newcommand{\myVec}[1]{\overrightarrow{#1}}
\newcommand{\sincos}{\begin{array}{c} \cos \\ \sin \end{array}\!\!}
\newcommand{\CvBc}[1]{\left\{\:#1\:\right\}}
\newcommand*{\one}{{{\rm 1\mkern-1.5mu}\!{\rm I}}}
\newcommand{\R}{\mathbb{R}}
\newcommand{\myEnd}{\hfill$\square$}
\newcommand{\ds}{\displaystyle}
\newcommand{\Shi}{\text{Shi}}
\newcommand{\Chi}{\text{Chi}}
\newcommand{\Daw}{\ensuremath{\mathrm{daw}}}
\newcommand{\Erf}{\ensuremath{\mathrm{erf}}}
\newcommand{\Erfi}{\ensuremath{\mathrm{erfi}}}
\newcommand{\Erfc}{\ensuremath{\mathrm{erfc}}}
\newcommand{\He}{\ensuremath{\mathrm{He}}}
\newcommand{\RR}{\mathbb{R} }
\def\crtL{\theta}
\def\sign{\hbox{sign}}
\newcommand{\conRef}[1]{(#1)}

\newcommand{\cov}{\text{\bf{Cov}}}
\newcommand{\var}{\text{\bf{Var}}}
\newcommand{\crb}{\color{blue}}
\newcommand{\di}{\diamond}
\newcommand{\dom}{\mbox{Dom}}
\newcommand{\hb}[1]{\textcolor{blue}{#1}}
\newcommand{\id}{\mbox{Id}}
\newcommand{\iou}{\int_{0}^{1}}
\newcommand{\iot}{\int_{0}^{t}}
\newcommand{\iott}{\int_0^T}
\newcommand{\ist}{\int_{s}^{t}}
\newcommand{\ot}{[0,t]}
\newcommand{\ott}{[0,T]}
\newcommand{\ou}{[0,1]}
\newcommand{\1}{{\bf 1}}
\newcommand{\2}{{\bf 2}}
\newcommand{\tss}{\textsuperscript}
\newcommand{\txx}{\textcolor}

\newcommand{\D}{\mathbb D}
\newcommand{\N}{\mathbb N}
\newcommand{\Q}{\mathbb Q}
\newcommand{\Z}{\mathbb Z}

\newcommand{\bb}{\mathbf{B}}
\newcommand{\be}{\beta}
\newcommand{\bg}{\mathbf{G}}
\newcommand{\bp}{\mathbf{P}}

\newcommand{\ca}{\mathcal A}
\newcommand{\cb}{\mathcal B}
\newcommand{\cac}{\mathcal C}
\newcommand{\hcac}{\hat {\mathcal C}}
\newcommand{\cd}{\mathcal D}
\newcommand{\ce}{\mathcal E}
\newcommand{\cf}{\mathcal F}
\newcommand{\cg}{\mathcal G}
\newcommand{\ch}{\mathcal H}
\newcommand{\ci}{\mathcal I}
\newcommand{\cj}{\mathcal J}
\newcommand{\ck}{\mathcal K}
\newcommand{\cl}{\mathcal L}
\newcommand{\cm}{\mathcal M}
\newcommand{\cn}{\mathcal N}
\newcommand{\cq}{\mathcal Q}
\newcommand{\cs}{\mathcal S}
\newcommand{\ct}{\mathcal T}
\newcommand{\cu}{\mathcal U}
\newcommand{\cv}{\mathcal V}
\newcommand{\cw}{\mathcal W}
\newcommand{\cz}{\mathcal Z}
\newcommand{\crr}{\mathcal R}
\newcommand{\XX}{\mathfrak X}
\newcommand{\HH}{\mathfrak H}
\newcommand{\al}{\alpha}
\newcommand{\dde}{\theta}
\newcommand{\ep}{\varepsilon}
\newcommand{\io}{\iota}
\newcommand{\ga}{\gamma}
\newcommand{\gga}{\Gamma}
\newcommand{\ka}{\kappa}
\newcommand{\la}{\lambda}
\newcommand{\laa}{\Lambda}
\newcommand{\om}{\omega}
\newcommand{\oom}{\Omega}
\newcommand{\ssi}{\Sigma}
\newcommand{\si}{\sigma}
\newcommand{\te}{\theta}
\newcommand{\tte}{\Theta}
\newcommand{\vp}{\varphi}
\newcommand{\vt}{\vartheta}
\newcommand{\ze}{\zeta}
\newcommand{\lp}{\left(}
\newcommand{\lt}{\left }
\newcommand{\rt}{\right}
\newcommand{\rp}{\right)}
\newcommand{\lc}{\left[}
\newcommand{\rc}{\right]}
\newcommand{\lcl}{\left\{}
\newcommand{\rcl}{\right\}}
\newcommand{\lln}{\left|}
\newcommand{\rrn}{\right|}
\newcommand{\lla}{\left\langle}
\newcommand{\rra}{\right\rangle}
\newcommand{\lan}{\left\langle}
\let\Section=\section
\def\section{\setcounter{equation}{0}\Section}
\newcommand{\Hh}{\mathcal{H}}
\newcommand{\e}{\varepsilon}
\def\theequation{\thesection.\arabic{equation}}
\def\RR{\mathbb{R} }
\def\NN{\mathbb{N}}
\def\EE{\mathbb{E}}
\def\Tr{{\rm Tr}}
\def\barh{B^H }

\def\problem{{\bf Problem\ \  \ }}
\def\wt{\widetilde}
\def\bfa{{\bf a}}
\def\bfb{{\bf b}}
\def\bfc{{\bf c}}
\def\bfd{{\bf d}}
\def\bfe{{\bf
e}}
\def\bff{{\bf f}}
\def\bfg{{\bf g}}
\def\bfh{{\bf h}}
\def\bfi{{\bf
i}}
\def\bfj{{\bf j}}
\def\bfk{{\bf k}}
\def\bfl{{\bf l}}
\def\bfm{{\bf
m}}
\def\bfn{{\bf n}}
\def\bfo{{\bf o}}
\def\bfp{{\bf p}}
\def\bfq{{\bf
q}}
\def\bfr{{\bf r}}
\def\bfs{{\bf s}}
\def\bft{{\bf t}}
\def\bfu{{\bf
u}}
\def\bfv{{\bf v}}
\def\bfw{{\bf w}}
\def\bfx{{\bf x}}
\def\bfy{{\bf y}}
\def\bfz{{\bf z}}
\def\cA{{\cal A}}
\def\cB{{\cal B}}
\def\cD{{\cal D}}
\def\cC{{\cal C}}
\def\cE{{\cal E}}
\def\cF{{\cal F}}
\def\cG{{\cal G}}
\def\cH{{\cal H}}
\def\fin{{\hfill $\Box$}}
\def\iint{{\int_0^t\!\!\!\int_0^t }}
\def\de{{\delta}}
\def\la{{\lambda}}
\def\si{{\sigma}}
\def\bbE{{\bb E}}
\def\rF{{\cal F}}
\def\De{{\Delta}}
\def\et{{\eta}}
\def\cL{{\cal L}}
\def\Om{{\Omega}}
\def\Al{{\Xi}}
\def\Ga{{\Gamma}}
\def\th{{\theta}}
\def\Exp{{\hbox{Exp}}}

\def\Si{{\Sigma}}
\def\tr{{ \hbox{ Tr} }}
\def\esssup {{ \hbox{ ess\ sup} }}
\def\La{{\Lambda}}
\def\vare{{\varepsilon}}
\def \eref#1{\hbox{(\ref{#1})}}
\def\bart{{\underline  t}}
\def\barf{{\underline f}}
\def\barg{{\underline  g}}
\def\barh{{\underline h}}
\def\th{{\theta}}
\def\Th{{\Theta}}
\def\Om{{\Omega}}
\def\cS{{\cal S}}
\def\cP{{\cal P}}
\def\bfone{{\bf 1}}
\def\var{{\rm Var}}
\def\diag{{\rm Diag}}
\newcommand{\red}{\textcolor{red}}
\def\blue{\color{blue}}

\makeatletter
\renewcommand{\theequation}{%
	\thesection.\arabic{equation}}
\@addtoreset{equation}{section}
\makeatother

\title{ BSDEs  generated by fractional space-time noise and related SPDEs}
\author{Yaozhong Hu$^{1,*}$,\,\,Juan Li$^{2,\dag}$,\,\, Chao Mi$^{2,\ddag}$ \\
 {\small Department of Math  and Stat  Sciences, University of Alberta, Edmonton, AB T6G 2G1, Canada.$^1$}\\
 {\small School of Mathematics and Statistics, Shandong University, Weihai, Weihai 264209, P. R. China.$^2$}\\
 {\small{\it E-mails: yaozhong@ualberta.ca,\,\ juanli@sdu.edu.cn,\,\ michao94@mail.sdu.edu.cn.}}
 \date{\today}
	}
\renewcommand{\thefootnote}{\fnsymbol{footnote}}
\footnotetext[1]{Y.Hu is supported by an NSERC discovery grant and a centennial fund of University of Alberta.}
\footnotetext[2]{J.Li  is supported by the NSF of P.R. China (NOs. 12031009, 11871037), National Key R and D Program
of China (NO. 2018YFA0703900), and NSFC-RS (No. 11661130148; NA150344).}
\footnotetext[3]{Corresponding authors. C.Mi is supported by China Scholarship Council. }
\maketitle

\textbf{Abstract}. This paper  is concerned with    the backward stochastic differential equations  whose generator is a weighted fractional Brownian field:
$Y_t=\xi+\int_t^T Y_s    W  (ds,B_s)  -\int_t^T Z_sdB_s$, $0\le t\le T$,     where $W$ is a $(d+1)$-parameter weighted fractional Brownian field of Hurst parameter  $H=(H_0, H_1, \cdots, H_d)$, which   provide probabilistic interpretations
(Feynman-Kac formulas) for certain linear stochastic partial differential
equations with colored space-time noise. Conditions on the Hurst parameter  $H$ and on the decay rate of the weight are given to ensure the existence and uniqueness of the solution
pair. Moreover, the explicit expression for both components $Y$ and $Z$ of the solution pair  are given.

\textbf{Keywords}. Backward stochastic differential equations; stochastic partial differential
equations; Feynman-Kac formulas; fractional space-time noise; explicit solution, Malliavin calculus.

\section{Introduction  and main result}
Let
$\RR^d$  be the $d$-dimensional Euclidean space.
Let $W=(W(t,x)\,, t\ge 0\,, x\in \RR^d)$ be a weighted
fractional Brownian field. Namely, $W$ is a mean-zero Gaussian random field with the following
covariance structure:
\begin{equation}
\EE \left[   W(t,x)   W(s,y)\right]
= R_{H_0}(s,t)  \rho(x)\rho(y)   \prod_{i=1}^dR_{H_i}(x_i, y_i)   \,,\label{e.1.1}
\end{equation}
where and throughout the paper, we assume $H_i\in (1/2, 1)$ for all $i=0, 1, \cdots, d$, and $R_H(\xi,\eta)=
\left[ |\xi|^{2H }+|\eta |^{2H }-|\xi-\eta|^{2H }\right]/2$, for all $\xi, \eta\in \RR$ and  $\rho(x)$ is a continuous function from $\RR^d$ to $\RR$ satisfying some properties which will be specified later.
We  consider the following (one dimensional)  linear backward  stochastic differential equation
(BSDE for short) with fractional noise generator:
\begin{equation}\label{e1}
  \begin{split}
  Y_t=\xi+\int_t^T Y_s    W  (ds,B_s)  -\int_t^T Z_sdB_s,\quad  t\in [0,T]\,,
  \end{split}
\end{equation}
where
$B$ is a $d$-dimensional
standard Brownian motion.  Our interest in this equation is motivated  from the following three aspects.
\begin{enumerate}
\item[(a)] The  first aspect  is the nonlinear Feynman-Kac formula (in our special case) which   relates the following two stochastic differential equations: the first  one is  the backward doubly stochastic differential equation (BDSDE for short)
\begin{eqnarray*}
Y^{t,x}_s
&=& \phi(X^{t,x}_T)+\int_s^T f(r, X^{t,x}_r, Y^{t,x}_r, Z^{t,x}_r)dr\\
&&\qquad\quad  +\int_s^T g(r, X^{t,x}_r, Y^{t,x}_r, Z^{t,x}_r)W(dr, X^{t,x}_r)-\int_s^t
Z^{t,x}_r dB_r\,,
\end{eqnarray*}
where $X_s^{t,x}$ is the solution to the following stochastic differential equation
\[
dX_s^{t,x}=b(X_s^{t,x}) ds+\si(X_s^{t,x}) dB_s\,,\quad s\in [t, T]\,, \quad X_t^{t,x} =x\in \RR^d\,.
\]
The second one is the stochastic partial differential equation (SPDE for short)
\begin{eqnarray}\label{spde}
\begin{cases}-d u(t,x)
 =  \left[ \cL u(t,x) +f(t, x, u(t,x), \nabla u(t,x) \si(x) )\right] dt \\
  \qquad\quad  +g (t, x, u(t,x), \nabla u(t,x) \si(x) )W(dt, x)\,,
\quad (t,x)\in [0, T]\times \RR^d\,, \\
u(T, x) =  \phi(x)\,,
\end{cases}
\end{eqnarray}
where $\cL$ is the generator associated with the Markov process $X_s^{t,x}$.
There are many articles  along this direction since the work of
\cite{PP1994}. Most scholars studied the BDSDEs under various conditions, whose solution can be used as the nonlinear Feynman--Kac formula to represent the solution to  the correlated semi-linear SPDEs driven by white noise.
We refer   to   \cite[Theorem 5.1]{SSZ2019}  and the references therein for the exact relation between the solutions of these two equations. It is worth noting that, BDSDEs and probabilistic interpretation (nonlinear Feynman-KAC formula) of SPDEs driven only by temporal white noise have been studied extensively in several directions, see e.g.  \cite{BM2001}, \cite{BM2001-1}, \cite{BM2001-2}  and \cite{JL2011}.  Although  Feynman-Kac formulas of (linear or non-linear) SPDE with spatial-temporal noise is obtained in \cite{HNS2011}, \cite{HHLNT2017}and \cite{HNS2013} for example, there are limited works to characterize the solution of SPDEs by using the solution of BSDEs. To the best of our knowledge  only    \cite{SSZ2019} and \cite{J2012} dealt with such problems.

\item[(b)] If  $b=0$ and $\si=1$, then  $\cL=\frac12 \Delta=\frac12\sum_{i=1}^d \frac{\partial ^2}{\partial x_i^2} $
is the half of the Laplacian. If further $g(r,x, u, p)=u$,  then
the  above SPDE \eqref{spde} becomes
\begin{equation}
-d u(t,x)=\frac12 \Delta u dt+u(t,x) W(dt, x),\ u(T, x) =  \phi(x)\,.\label{anderson}
\end{equation}
This equation has  enjoyed  a great attention  in recent decade (when the terminal condition is replaced by the initial condition and the noise $W(dt,x)$ is replaced by   more singular one
$\frac{\partial ^d}{\partial x_1\cdots \partial x_d}W(dt,x)$),
 often in the name of parabolic Anderson model.  We refer to a survey work \cite{H2019} and references therein for further study. Let us only point out that  many works  do not require that the noise is white in time in their study: For the SPDE in   the above case (b), the associated BDSDEs becomes
\begin{equation}\label{BSDE_Mar}
Y_s^{t,x}= \phi(B_T^{t,x})+\int_s^T
Y_r^{t,x} W(dr, B_{r-t}^{t,x})  -\int_s^t
Z^{t,x}_r dB_r\,,
\end{equation}
where $B_s^{t,x} = x+ (B_s-B_t) $ is a $d$-dimensional Brownian motion starting at time $t$ from the point $x$. This  equation is of the form \eqref{e1}. Its   probabilistic interpretation, the explicit form  and some sharp properties
of solution will be the main focus  of  this paper.
\end{enumerate}

To  illustrate our main results of finding  the explicit representation of the solution pair using     partial Malliavin  derivatives we shall follow the idea of \cite{HNS2011}.
Define (we shall justify it in the next section)
\begin{equation}
\al_s^t=\exp\left[\int_s^t {W} (dr,B_r)\right]
\label{e.1.5}
\end{equation}
and denote  by
 $\cF_t =\si(B_s, 0\le s\le t\,; W(t,x), t\ge 0, x\in \RR^d )$ the $\si$-algebra generated by
 Brownian motion up to time instant $t\ge 0$ and $W(t,x)$ for all $t$ and $x\in \RR^d$.
Then   we  have formally   the  following  candidate for the solution pair
\begin{equation}
\begin{cases}
Y_t=(\al_0^t)^{-1}\EE  \left[\xi \al_0^T  \,\big|\,\mathcal{F}_t\right]
=\EE  \left[\xi   \exp\big(\int_t^T   {W}(dr,B_r)\big)\,\big|\,\mathcal{F}_t \right]
 &  \hbox{  by \cite[Equation (2.11)]{hu2011malliavin} },\\ \\
Z_t=D_t^BY_t=D_t^B\EE \left[\xi   \exp\big(\int_t^T  {W}(dr,B_r)\big)\,\big|\,\mathcal{F}_t \right]
&  \hbox{  by \cite[Equation (2.23)]{hu2011malliavin} }\,,
\end{cases}\label{e.1.6}
\end{equation}
where $D_t^B$ is the Malliavin gradient with respect to the Brownian motion $B$ (see next section for the  definition and properties), and $\EE^B$ is the expectation with respect to $B$ (explained in detail in the proof of Proposition \ref{3.1}).
Here is the main result of this paper.
\begin{thm}\label{th1}
  Suppose $\sum_{i=1}^d (2H_i - \beta_i)<2$ and $\xi\in \mathbb{D}^{1,q}_B$ is measurable w.r.t. $\sigma$-field $\cF_T^B$, for $\displaystyle q>\frac{2}{2\underline{H} -1}$, where $\underline{H}=\min\{H_0,\ldots,H_d\}$.
Then we have the following results:
\begin{enumerate}
\item[(1)]   The processes $\{(Y_t, Z_t), 0\le t\le T\}$ formally
 defined  by \eqref{e.1.6}  are well-defined and square integrable, and they are the solution pair to  the BSDE \eqref{e1}. Moreover, $Z$ has the following alternative expression:
 \begin{equation}
 Z_t=\EE \bigg[   e^{\int_t^T  {W} (d\tau, B_\tau) } D_t^B\xi
 +\int_t^T e^{\int_t^s { {W}} (d\tau, B_\tau)
      }  Y_s  (\nabla_x  W) (ds, B_s)
 \Big|\cF_t\bigg] \, .
 \end{equation}
\item[(2)] If for all  $\displaystyle q >2$, $\EE |D_t^B\xi-D_s^B\xi|^q\le
 C|t-s|^{\kappa q/2}$ for some $\kappa \in (0, 2) $, then
 for any $a>1$ and for any $\vare>0$,  we have
 the following H\"older continuity for $Y$ and $Z$:
 \begin{equation}
 \EE  |Y_t-Y_s|^a  \leq C_a \,|t-s|^{a/2},\ \quad  \EE  \big| Z_t  - Z_s \big|^2  \leq C_\vare |t-s|^{
 (2H_0+ \underline{H} -1 -\vare)\wedge \kappa      }\,, \ \
 \forall \  s, t\in[0,T].
 \end{equation}
\item[(3)] If a  pair $(Y, Z)$ satisfies   (2) for some $a, \kappa >0$,  then $(Y,Z)$ is  represented by \eqref{e.1.6} and hence the BSDEs \eqref{e1} has a unique solution.
\item[(4)] If $(Y,Z)\in\mathcal{S}^2_{\mathbb{F} }(0,T;\RR) \times\mathcal{M} ^2_{\mathbb{F} }(0,T;\RR^d)$ is  the solution pair of BSDEs \eqref{e1} so that  $Y, D^B Y$ are  $ \mathbb{D}^{1,2}$ then the solution  also has the  explicit expression \eqref{e.1.6} and hence the BSDEs \eqref{e1} has a unique solution.
\end{enumerate}
 \end{thm}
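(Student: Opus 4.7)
The plan is to treat the four assertions of Theorem~\ref{th1} in order, using the Feynman--Kac candidate \eqref{e.1.6} and combining partial Malliavin calculus in the Brownian direction $B$ (as developed in \cite{hu2011malliavin}) with Skorokhod/divergence integration against the weighted fractional field $W$.

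\textbf{Part (1): existence and the alternative expression for $Z$.} I would first give meaning to $\int_0^T W(dr,B_r)$ as a divergence integral in $W$ conditionally on $B$, so that, conditionally on $B$, this integral is centred Gaussian with variance
\[
V(B)=\iint_{[0,T]^2}\rho(B_r)\rho(B_s)\prod_{i=1}^d R_{H_i}(B_r^i,B_s^i)\,|r-s|^{2H_0-2}\,dr\,ds.
\]
An exponential--moment bound of Hu--Nualart type then yields $\alpha_0^T\in L^q$ for every $q\ge 1$, the assumption $\sum_{i=1}^d(2H_i-\beta_i)<2$ being precisely what keeps $\EE^B\exp(cV(B))<\infty$; together with $\xi\in\D^{1,q}_B$ this makes $Y_t$ well defined and square integrable. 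To show that $(Y,Z)$ solves \eqref{e1}, I would apply a Skorokhod-type Itô formula to $M_t:=\alpha_0^tY_t=\EE[\xi\alpha_0^T\mid\cF_t]$; since $M$ is a martingale, the Clark--Ocone formula \cite[Eq.~(2.23)]{hu2011malliavin} identifies its $dB$-integrand as $D_t^B M_t$, while $d\alpha_0^t=\alpha_0^tW(dt,B_t)$ accounts for the generator term. Undoing the product gives $Z_t=D_t^BY_t$. The alternative expression in~(1) then follows from the Malliavin chain rule applied to $\xi\exp(\int_t^T W(dr,B_r))$ --- differentiating the exponent produces $\int_t^T(\nabla_x W)(dr,B_r)$ --- combined with Fubini and the tower identity $\EE[\xi e^{\int_s^TW(dr,B_r)}\mid\cF_s]=Y_s$.

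\textbf{Part (2): Hölder continuity.} For $Y$ I would split $Y_t-Y_s=\EE[\xi(\alpha_t^T-\alpha_s^T)\mid\cF_t]+(\EE[\xi\alpha_s^T\mid\cF_t]-\EE[\xi\alpha_s^T\mid\cF_s])$; the first piece is controlled by the conditional Gaussian bound $\EE|\int_s^tW(dr,B_r)|^{2a}\le C|t-s|^{aH_0}$, while the second is a martingale increment whose $L^a$ norm, via Clark--Ocone and $\xi\in\D^{1,q}_B$, produces the rate $|t-s|^{1/2}$, yielding the $a/2$ exponent. For $Z$ I would work from the alternative expression of Part~(1): the $D^B\xi$ piece inherits the regularity $\kappa/2$ from the hypothesis on $\xi$, while the $(\nabla_xW)$ piece has a conditional variance computed directly from \eqref{e.1.1}, whose Hölder exponent is $2H_0+\underline{H}-1-\vare$ for every $\vare>0$ (with the losses coming from the spatial increments of $B$); the minimum of the two rates is what appears in the theorem.

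\textbf{Parts (3) and (4): uniqueness.} Given any candidate $(Y,Z)$ satisfying the regularity of (2), I would apply an Itô--Skorokhod product formula to $Y_t\alpha_0^t$ and use $dY_t=-Y_tW(dt,B_t)+Z_t\,dB_t$ from \eqref{e1} together with $d\alpha_0^t=\alpha_0^tW(dt,B_t)$ to show that the drift cancels in the Skorokhod sense; hence $Y_t\alpha_0^t$ is a martingale with terminal value $\xi\alpha_0^T$, forcing \eqref{e.1.6}. The Hölder exponent $\kappa>0$ of $Z$ enters through the Malliavin trace and cross-variation terms appearing when the product formula is rigorously expanded. For (4) the same program is executed under the weaker hypothesis $Y,D^BY\in\D^{1,2}$, now using the Malliavin duality between divergence and derivative to handle the $Y_sW(ds,B_s)$ term in $L^2$. \emph{The main obstacle} I expect is precisely this Itô--Skorokhod product formula in the mixed setting, in which Brownian integration against $B$ is combined with divergence integration against a fractional field $W$ sampled along the Brownian path: one must simultaneously control cross-terms, the Malliavin traces of $W$ against $B$, and the spatial sampling of $W$ at $B_s$. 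The two standing hypotheses $\sum_{i=1}^d(2H_i-\beta_i)<2$ and $H_i>1/2$ are used throughout exactly to make all these objects summable and to give meaning to their limits.
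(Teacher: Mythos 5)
Your outline reproduces the paper's candidate solution \eqref{e.1.6}, the same two-term decompositions for the H\"older estimates, and the same product $\alpha_0^tY_t$ for uniqueness, but it leaves unaddressed exactly the points where the paper's real work lies, and two of these are genuine gaps rather than omitted routine detail. First, the nonlinear integral $\int_t^T Y_s\,W(ds,B_s)$ appearing in \eqref{e1} has no a priori meaning: in the paper it is \emph{defined} (Definition \ref{def stra}) as the limit in probability of $\int_t^T Y_s\dot W_{\varepsilon,\eta}(s,B_s)\,ds$ for the mollified noise \eqref{nsp}, and Theorem \ref{yw converge} is devoted to proving that this limit exists and equals $Y_t-\xi+\int_t^T Z_s\,dB_s$. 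Your plan to ``apply a Skorokhod-type It\^o formula to $M_t=\alpha_0^tY_t$'' and to read off $d\alpha_0^t=\alpha_0^t\,W(dt,B_t)$ presupposes this object and this chain rule; the paper has to construct both by hand (the identity $\alpha_0^t=\alpha_0^s+\int_0^t\alpha_0^r\,W(dr,B_r)$ is itself a lemma proved by a Taylor-expansion/Riemann-sum argument in Section 5). Without the approximation scheme, the verification that $(Y,Z)$ solves \eqref{e1} is circular.

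Second, you do not establish that $Z\in\mathcal{M}^2_{\mathbb{F}}(0,T;\RR^d)$, which is the hardest estimate in the paper (Theorem \ref{z}): because $Z$ involves $\int_t^T e^{\int_t^s W(d\tau,B_\tau)}Y_s(\nabla_xW)(ds,B_s)$, one must control the conditional expectation of a product of a Gaussian exponential with two derivative factors of the field. The paper does this by introducing two independent copies $B^1,B^2$ of the Brownian motion to remove the conditional expectation, invoking the Gaussian identity $\EE[X_1X_2e^{Y}]=(\EE(X_1Y)+\EE(X_2Y)+\EE(X_1X_2))e^{\EE(Y^2)/2}$ (Lemma \ref{x1x2y}), and then bounding the resulting singular kernels $|B^{1,i}_{s_1}-B^{2,i}_{s_2}|^{2H_i-2}$; the related $\mathcal{S}^p$ bound for $Y$ requires Talagrand's majorizing measure theorem and the Borell--TIS inequality. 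None of this is visible in your proposal, and ``the conditional variance computed directly from \eqref{e.1.1}'' does not by itself yield integrability of these quantities against $\xi(B^1)\xi(B^2)$ times the exponential factor. Finally, for parts (3) and (4) you correctly identify the It\^o--Skorokhod product formula as the main obstacle but then assume it; the paper proves it by a discrete partition argument with five remainder terms $R_{1,i},\dots,R_{5,i}$, each estimated using either the H\"older continuity from part (2) or the hypotheses $Y,D^BY\in\mathbb{D}^{1,2}$ together with the second-moment formula \eqref{F2split} from the Appendix. As it stands the proposal is a faithful map of the theorem but not a proof of it.
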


\begin{remark} Since we assume $H_0>1/2$,
we see $2H_0+ \underline{H} -1>0$.
We
can only obtain the H\"{o}lder continuity of $Z$ in the mean square sense. We encounter the difficulty to deal with high moments for $Z$.
\end{remark}

Now let us  point out the novelty compared to two relevant works.      In the  work \cite{J2012},  the generator
$W$ is a fractional Brownian motion (the generator
$W$ does not depend on $x$).
In the work    \cite{SSZ2019}  $W$ can depend on the space $x$,   but
it is assumed that    that it  is   a backward martingale with to time variable $t$ so that the
backward martingale technology   can be used. In our above theorem,
neither the assumption   that $W$ is independent of $x$,
nor the assumption   that $W$ is  a backward martingale is assumed.
In particular, we can obtain the explicit solution (for linear equation)
and use this expression to obtain the some kind sharp  H\"older continuity
for the solution pair which, to our best knowledge, are new.

Here is the organization of this work.  In next section, we shall
show that the quantity $\int_s^t {W} (dr,B_r)$ in \eqref{e.1.6} is
well-defined and is exponentially integrable so that $Y_t$ is well-defined.
In Section  \ref{s.3}, we obtain some properties of the process $Y_t$ and show that it is Malliavin differentiable and $Z_t$ is well-defined.
We show that the pair  $(Y_t, Z_t)$ is the solution to the linear BSDE
\eqref{e1}.  A great difficulty
 is that we need to show that the process $Y$
is in $\mathcal{S}_{\mathbb{F}}^{p}(0, T ; \mathbb{R})$ and $ Z$ is in
 $\mathcal{M}_{\mathbb{F}}^{p}(0, T ; \mathbb{R})$ due to the singularity of the noise $W$ in the generator.  We overcome this difficulty by
 Talagrand theorem \ref{majorizing}, Borell theorem \ref{Borell},   and a new Lemma \ref{x1x2y}.     In Section \ref{s.4}, we use the explicit expression to obtain   H\"older continuity of the solution pair. The H\"older continuity of the process $Z_t$ is always a difficult problem (see e.g. \cite{hu2011malliavin,
zhang2004, mazhang2002}) however plays a critical role in numerical
method. In Section \ref{s.5} we discuss the relation between the linear BSDE \eqref{BSDE_Mar}  and the stochastic PDE
\eqref{anderson}.



\section{Exponential integrability of
$\int_t^T {{W}}(ds,B_s)$ }
Let $T>0$ be a fixed time horizon and let $(\Omega, \mathcal{F}, P)$ be a   complete probability space, on which the expectation is denoted by $\EE$.
Let $\left\{B_{t}, 0 \leq t \leq T\right\}$
 be a $d$-dimensional standard Brownian motion defined on $(\Omega, \mathcal{F}, P)$.
Suppose $   W=\{  W(t,x),t\geq 0,\ x\in\RR^d\}$ is a weighted fractional   Brownian space-time  field
whose covariance is given by \eqref{e.1.1}.
The stochastic integral with respect to $W$ is
well-defined in many references,   and we refer to \cite{H2019} and references therein for more details.
We shall use this concept  freely.
For example, we denote  $W(\phi)=\int_{\RR_+\times \RR^d} \phi(t, x) W(dt, x)dx$ for any $\phi\in \cD=\cD( \RR_+\times \RR^d,\RR)$, where $\cD$ is the set of all smooth functions with compact support from  $\RR_+\times \RR^d$ to $\RR$.
We denote the spatial covariance as
\begin{equation}
q(x,y)=\rho(x)\rho(y)\prod_{i=1}^d R_{H_i}(x_i,y_i)\,,\quad \forall \ x=(x_1, \cdots, x_d)^T\,,\quad
y=(y _1, \cdots, y_d)^T\in\RR^d\,,
\label{e.1.7}
\end{equation}
where $\rho:\RR^d\to \RR $ is  a  continuous    function of power decay, and we will specify the conditions that $\rho$ are satisfied  later.   It is known that
\begin{equation}\label{inprod func true}
  \begin{split}
    \EE\big[ {W}(h) {W}(g) \big ]
    &=   \int_{\RR_+^2 \times \RR^{2d} } h(t,x)g(s,y)|s-t|^{2H_0-2}
    \rho( x)\rho(y )\prod_{i=1}^d R_{H_i}(x_i,y_i) dsdtdxdy
  \end{split}
\end{equation}
for all $ h, g\in \cD$. It is clear that $\big\langle h,g \big\rangle _\cH$ is a scalar product on $\cD$. We denote $\cH$ the Hilbert space by completing $\cD$ with respect to  this  scalar product.

Let  $F$ be  a  cylindrical random variable of the form
$$F=f\left(W\left(\phi^{1}\right), \ldots, W\left(\phi^{n}\right)\right),$$
where $\phi^i \in \cD\,, i=1, \cdots, n$  and  $f\in C^{\infty}_p (\RR^n)$, i.e., $f$ and all its partial derivatives have polynomial growth.   The set of all such cylindrical random variables is denoted by
$\cP$.   If $F\in \cP$ has the above form, then  $D^WF$ is the $\mathcal{H}$-valued
random variable defined by
$$D^W F=\sum_{j=1}^{n} \frac{\partial f}{\partial x_{j}}\left( {W}\left(\phi_{1}\right), \ldots,  {W}\left(\phi_{n}\right)\right) \phi_{j}.$$
The operator $D^W$ is closable from $L^{2}(\Omega)$ into $L^{2}(\Omega, \mathcal{H})$, namely $D^W$ is the Malliavin derivative operator with respect to the fractional Brownian motion $W$. We define the Sobolev space $\mathbb{D}^{1,p}_W$ as the closure of $\cP$  under the following norm :
$$\|D^W F\|_{1,p}=\left(\EE\left|F\right|^{p}+\EE\|D^W F\|_{\mathcal{H}}^{p}\right)^{1/p}.$$
Let us denote by $\delta$ the adjoint of the derivative operator given by duality formula
$$\EE(\delta(u) F)=\EE\left(\langle D^W F, u\rangle_{\mathcal{H}}\right) \quad \text { for any } F \in \mathbb{D}^{1,2}_W,$$
where $\delta(u)$ is also called the Skorohod integral of $u$. We refer to   \cite{H2016} and \cite{N2006} for a detailed account on the Malliavin calculus. For any random
variable $F \in \mathbb{D}^{1,2}_W$ and $\phi \in \mathcal{H}$, we will often use the following formula in the text:
$$F  {W}(\phi)=\delta(F\phi)+\langle D^W F, \phi\rangle_{\mathcal{H}}.$$
Accordingly, we can define $D^B$ the Malliavin derivative operator with respect to the standard Brownian motion $B$ and $\mathbb{D}^{1,p}_B$ the Sobolev space in the same way. We say a random field $F\in\mathbb{D}^{1,p}$ if $F$ is an element both in $\mathbb{D}^{1,p}_B$ and $\mathbb{D}^{1,p}_W$.

%
The stochastic integral studied earlier is
useful in this paper but is  not sufficient for our purpose.
We also need to introduce a new kind of nonlinear stochastic integral
similar to that of Kunita (\cite{kunita1997}).
 To this end,  we
introduce the  approximation of ${\dot{W}}$
as follows.
\begin{equation}\label{nsp}
\begin{split}
  {\dot{W}}_{\varepsilon,\eta}(s,B_s) =& \int_0^s \int_{R^d} \varphi_\eta (s-r) p_\varepsilon (B_s -y) W(dr,y)dy\,,
\end{split}
\end{equation}
where $\varphi_\eta$ and $p_\varepsilon$ are the approximation of the Dirac delta functions:
$$\varphi_\eta(t) = \frac{1}{\eta}\mathbf{1}_{[0,\eta]} (t),\ p_\varepsilon(x) = (2\pi\epsilon)^{-d/2} e^{-|x|^2/2\varepsilon},  \quad \mbox{for all}\ \eta,\ \varepsilon>0.$$

\begin{prop}\label{3.1}
Let $\rho:\RR^d\to \RR $ be  a  continuous  function of power decay, i.e., $\rho$ satisfying $ 0\le \rho(x )\le C\prod_{i=1}^d (1+|x_i|)^{-\beta_i}$, where $\beta_i \in (0,2) $ and $2H_i>\beta_i$ for all $i=1,2,\ldots d$ and suppose
\begin{equation}\label{e.2.7}
\al:=\sum_{i=1}^d (2H_i - \beta_i)<2\,.
\end{equation}
 Then
 the stochastic integral $V_t^{\ep,\eta}:=\int_t^T
  {\dot{W}}_{\varepsilon,\eta}(s,B_s)ds$ converges in $L^2(\Omega)$ to a limit denoted by
\begin{eqnarray}\label{vt}
  V_t = \int_t^T {{W}}(ds,B_s).
\end{eqnarray}
Moreover, conditioning  on $\cF^B$, $V_t$ is a mean-zero Gaussian random variable with variance
\begin{eqnarray}\label{vtv}
  \var^W (V_t) =   \int_t^T\int_t^T |s-s'|^{2H_0-2} \rho(B_s )\rho(B_{s'} )\prod^d_{i=1} R_{H_i}(B_s^i,B_{s'}^i) dsds'\,.
\end{eqnarray}
\end{prop}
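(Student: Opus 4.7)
The plan is to condition on the Brownian $\sigma$-field $\cF^B$: once a path of $B$ is frozen, the integrand $\W_{\varepsilon,\eta}(s,B_s)$ is a Wiener integral of $W$, so $V_t^{\varepsilon,\eta}$ becomes a centered Gaussian functional of $W$ whose variance is $\cF^B$-measurable. I will then (i) compute this conditional covariance explicitly via \eqref{inprod func true}, (ii) pass to the limit pathwise as the mollifiers shrink, (iii) upgrade to $L^2(\Omega)$ convergence by supplying an $\omega$-integrable dominant, and (iv) transfer conditional Gaussianity to the limit.

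For steps (i)--(ii), writing $V_t^{\varepsilon,\eta}=W(h_{\varepsilon,\eta})$ with
$h_{\varepsilon,\eta}(r,y)=\int_{\max(t,r)}^{T}\varphi_\eta(s-r)\,p_\varepsilon(B_s-y)\,ds$,
formula \eqref{inprod func true} gives
\[
\EE^W\bigl[V_t^{\varepsilon,\eta}V_t^{\varepsilon',\eta'}\bigr]=\int_{[t,T]^2}\!\int\!\int\!\int\!\int \varphi_\eta(s{-}r)\varphi_{\eta'}(s'{-}r')|r{-}r'|^{2H_0-2}p_\varepsilon(B_s{-}y)p_{\varepsilon'}(B_{s'}{-}y')\,q(y,y')\,dr\,dr'\,dy\,dy'\,ds\,ds'.
\]
Standard mollifier identities give, for almost every Brownian path and for a.e.\ $(s,s')\in[t,T]^2$, the convergence $\int\!\int\varphi_\eta(s-r)\varphi_{\eta'}(s'-r')|r-r'|^{2H_0-2}\,dr\,dr'\to |s-s'|^{2H_0-2}$; by continuity of $q$ on $\RR^d\times\RR^d$, we also have $\int\!\int p_\varepsilon(B_s-y)p_{\varepsilon'}(B_{s'}-y')q(y,y')\,dy\,dy'\to q(B_s,B_{s'})$. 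Hence the conditional covariance converges pathwise to $\Sigma(B):=\int_{[t,T]^2}|s-s'|^{2H_0-2}q(B_s,B_{s'})\,ds\,ds'$.

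For step (iii), Cauchy-ness in $L^2(\Omega)$ follows from
\[
\EE|V_t^{\varepsilon,\eta}-V_t^{\varepsilon',\eta'}|^2=\EE\bigl[\EE^W|V_t^{\varepsilon,\eta}|^2\bigr]+\EE\bigl[\EE^W|V_t^{\varepsilon',\eta'}|^2\bigr]-2\,\EE\bigl[\EE^W\bigl(V_t^{\varepsilon,\eta}V_t^{\varepsilon',\eta'}\bigr)\bigr]
\]
provided an $\omega$-integrable dominant for the conditional covariance exists. Using $\rho(x)\le C\prod_i(1+|x_i|)^{-\beta_i}$ together with $R_{H_i}(x_i,y_i)\le C(|x_i|^{2H_i}+|y_i|^{2H_i})$, one bounds $q(y,y')$ by a polynomial whose effective degree in the diagonal directions is $\alpha=\sum_i(2H_i-\beta_i)$; the Gaussian mollifiers $p_\varepsilon,p_{\varepsilon'}$ preserve such bounds under convolution, producing a dominant of the form $C\int_{[t,T]^2}|s-s'|^{2H_0-2}(1+|B_s|+|B_{s'}|)^{\alpha}\,ds\,ds'$ that is uniform in $\varepsilon,\eta,\varepsilon',\eta'$. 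Since $H_0>1/2$ makes $|s-s'|^{2H_0-2}$ integrable on $[t,T]^2$ and every Gaussian moment of $B$ is finite, the dominant lies in $L^1(\Omega)$, and Fubini together with dominated convergence finish the $L^2(\Omega)$-convergence.

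Finally, for step (iv), since the conditional characteristic function $\EE^W[e^{iuV_t^{\varepsilon,\eta}}]=\exp(-u^2\EE^W|V_t^{\varepsilon,\eta}|^2/2)$ converges pointwise in $\omega$ to $\exp(-u^2\Sigma(B)/2)$, the $L^2$-limit $V_t$ is conditionally centered Gaussian with variance $\Sigma(B)$, which is exactly \eqref{vtv}. The main obstacle is step (iii): building the $\omega$-uniform dominant requires simultaneously exploiting the weight decay ($\beta_i<2H_i$) to tame the spatial growth of $R_{H_i}$ and the sum condition $\alpha<2$ (which will play its more decisive role in the next section for exponential integrability of $V_t$), together with a careful use of the mollifier structure of $p_\varepsilon$ to control $q(y,y')$ at large $|y|,|y'|$ without introducing spurious singularities.
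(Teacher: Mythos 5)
Your proposal is correct and follows essentially the same route as the paper: condition on $\cF^B$, compute the conditional covariance of $V_t^{\varepsilon,\eta}$ via \eqref{inprod func true}, identify its pathwise limit $\int_{[t,T]^2}|s-s'|^{2H_0-2}q(B_s,B_{s'})\,ds\,ds'$, and establish the $L^2(\Omega)$ Cauchy property by dominating the conditional covariance with $C\,|s-s'|^{2H_0-2}$ times a polynomial in $\sup_s|B_s|$ (the paper phrases the spatial mollification as an expectation over auxiliary standard Gaussians $X,X'$, but this is the same estimate). Your observation that only finiteness of Gaussian polynomial moments is needed here, with $\alpha<2$ reserved for the exponential integrability, matches the paper's use of these hypotheses.
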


\begin{proof}
Suppose $\ep,\ep',\eta,\eta'\in(0,1)$. 
Throughout the paper denote   by $\EE^W$  the expectation with respect to the random field $W$  which considers other random elements as    fixed ``constant".  For example, if $ F(W, B)$ is a functional of $W$ and $B$, then $\EE^W(F(W, B) )=\EE(F(W, B)|\cF^B)$.
By Fubini's theorem and \eqref{nsp}, we have
\begin{align}
    &\EE^W  \Bigg[\int_t^T{\dot{W}}_{\varepsilon,\eta}(s,B_s)ds \int_t^T {\dot{W}}_{\varepsilon',\eta'}(s,B_s)ds\Bigg]\nonumber\\
    =\,& \alpha_{H_0}  \int_t^T\int_t^T\int_t^s \int_t^{s'}\int_{\RR^{2d}}\varphi_\eta (s-r)\varphi_{\eta'} (s'-r')p_\varepsilon (B_s -y) p_{\varepsilon'} (B_{s'} -y')\nonumber \\
    &\qquad\qquad\qquad\qquad\qquad |r-r'|^{2H_0-2}\rho(y )\rho(y' ) \prod^d_{i=1} R_{H_i}(y_i,y'_i) dydy'drdr'dsds'\nonumber  \\
    =\,& \alpha_{H_0}\int_t^T\int_t^T\int_t^s \int_t^{s'} \varphi_\eta (s-r)\varphi_{\eta'} (s'-r')|r-r'|^{2H_0-2}
    \\
        &\qquad
     \EE^{X, X'}  \left\{ \rho(\sqrt\varepsilon X  + B_s  )\rho(\sqrt{\varepsilon'} X'  +B_{s'} ) \prod^d_{i=1} \Big[  R_{H_i}(
     \sqrt\varepsilon X_i + B_s^i, \sqrt\varepsilon X'_i + B_{s'}^i)  \Big] \right\} 
        drdr'dsds'\nonumber\\
=:&I(\varepsilon, \varepsilon', \eta, \eta')\,,\nonumber
\end{align}
where $X=(X_1, \cdots, X_d),X'=(X_1', \cdots, X_d')$ are  independent  standard  random variables,
which are also independent of $\cF^B$.


To study the  limit of  above $I(\varepsilon, \varepsilon', \eta, \eta')$  as $\varepsilon, \varepsilon', \eta, \eta'\to 0$, we observe that, firstly \cite[Lemma A.3]{HNS2011} directly yields
\begin{equation}\label{estphieta}
\int_t^s \int_t^{s'}\varphi_\eta (s-r)\varphi_{\eta'} (s'-r')|r-r'|^{2H_0-2}drdr' \leq |s-s'|^{2H_0-2}.
\end{equation}
Moreover,
  \begin{align}
  \left| q(y, y') \right|&
   = \frac{1}{2}\rho(y )\rho(y' ) \prod^d_{i=1} \left| |y_i|^{2H_i} + |y'_i|^{2H_i}-|y_i-y_i'|^{2H_i}\right|
  \nonumber \\
    &\leq C \rho(y )\rho(y' )\prod^d_{i=1}(|y_i|^{2H_i} + |y'_i| ^{2H_i}) \nonumber\\
&\leq C \prod^d_{i=1} (1+|y_i| ^{2H_i})
     (1+|y'_i|^{2H_i})(1+|y_i|)^{-\beta_i}
     (1+|y_i'|)^{-\beta_i}\label{R split}\\
     &\leq C \prod^d_{i=1} (1+|y_i|)^{2H_i-\beta_i}
     (1+|y'_i|)^{2H_i-\beta_i} \,,\nonumber
  \end{align}
where and throughout this paper  $C$  is a generic constant
  depending  only on $H_i,\ i=1,\ldots,d.$\\
This can be used to show that
\begin{align}\label{Eep}
I_1(\varepsilon, \varepsilon', s, s')
:=\EE^{X, X'}  \Big[ \rho(\sqrt\varepsilon X  + B_s  )\rho(\sqrt{\varepsilon'} X' +B_{s'} )\prod^d_{i=1} R_{H_i}(
     \sqrt\varepsilon X_i + B_s^i, \sqrt\varepsilon X'_i + B_{s'}^i)\big]
\end{align}
is a pathwise bounded continuous function of
$\varepsilon, \varepsilon', s, s'$ in the concerned
domain (almost surely with respect to
$B$).   Thus, we have
\begin{equation}\label{EIinfyty}
  \begin{split}
\EE [I&(\varepsilon, \varepsilon', \eta, \eta')]\\
   & =\alpha_{H_0}\EE\int_t^T\int_t^T\int_t^s \int_t^{s'} \varphi_\eta (s-r)\varphi_{\eta'} (s'-r')|r-r'|^{2H_0-2}
I_1(\varepsilon, \varepsilon', s, s')
        drdr'dsds'\\
         &\leq  \alpha_{H_0}\int_t^T\int_t^T |s-s'|^{2H_0-2} \prod^d_{i=1}\EE\, (1+|B_s^i|)^{2H_i-\beta_i}(1+|B_{s'}^i|)^{2H_i-\beta_i}dsds'\\
         &\leq C |T-t|^{2H_0} <\infty\,.
  \end{split}
\end{equation}
Moreover, for $s\neq s'$, as $\ep,\ep',\eta,\eta'$ tend to zero we have
\begin{equation*}
  \begin{split}
 &\lim_{ \varepsilon, \varepsilon', \eta, \eta'\to 0}
I(\varepsilon, \varepsilon', \eta, \eta')\\
    &\qquad =\alpha_{H_0}\lim_{ \varepsilon, \varepsilon', \eta, \eta'\to 0}\int_t^T\int_t^T\int_t^s \int_t^{s'} \varphi_\eta (s-r)\varphi_{\eta'} (s'-r')|r-r'|^{2H_0-2}
I_1(\varepsilon, \varepsilon', s, s')
        drdr'dsds'\\
         &\qquad = \alpha_{H_0} \int_t^T\int_t^T |s-s'|^{2H_0-2} \rho(B_s)\rho(B_{s'})\prod^d_{i=1} R_{H_i}(B_s^i,B_{s'}^i)dsds' \,.
  \end{split}
\end{equation*}
Therefore, if we put $\ep=\ep',\ \eta=\eta'$ and use the estimates \eqref{R split} and \eqref{Eep}, and with the help of Lebesgue's convergence theorem we have
$$
\EE\left(V_{t}^{\varepsilon, \eta}-V_{t}^{\varepsilon^{\prime}, \eta^{\prime}}\right)^{2}=\EE\left(V_{t}^{\varepsilon, \eta}\right)^{2}-2 \EE\left(V_{t}^{\varepsilon,\eta} V_{t}^{\varepsilon^{\prime}, \eta^{\prime}}\right)+\EE\left(V_{t}^{\varepsilon^{\prime}, \eta^{\prime}}\right)^{2} \rightarrow 0,\ \mbox{as}\ \ep,\ep',\eta,\eta'\rightarrow 0.
$$
As a consequence we have $V_{t}^{\varepsilon_n, \eta_n}$ is a Cauchy sequence in $L^2(\Om)$. It has then a limit denoted by $V_t$,
proving the proposition.
\end{proof}
%

\begin{prop}\label{exp int V}
 Let $\rho:\RR^d\to \RR $ be  a  continuous    function satisfying \eqref{e.2.7}.
Then, for all $\lambda\in\RR$,
$$\displaystyle \EE \left[\exp\big(\lambda\int_t^T  {W}(dr,B_r)\big)\right]<\infty.$$
\end{prop}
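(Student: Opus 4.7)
The plan is to exploit the conditional Gaussianity of $V_t:=\int_t^T W(dr,B_r)$ established in Proposition \ref{3.1}. Conditional on the Brownian filtration $\cF^B$, $V_t$ is a centered Gaussian variable with variance $\sigma^2:=\var^W(V_t)$ given explicitly by \eqref{vtv}, so the Gaussian moment generating function yields
\[
\EE\big[\exp(\la V_t)\big] \;=\; \EE\big[\EE^W[\exp(\la V_t)]\big] \;=\; \EE\big[\exp(\la^2\sigma^2/2)\big],
\]
reducing the problem to showing that $\EE[\exp(c\sigma^2)]<\infty$ for every $c>0$.

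For the key pointwise upper bound on $\sigma^2$, I would \emph{not} use the crude estimate \eqref{R split} directly, but instead exploit that $R_{H_i}$ is a positive semidefinite covariance kernel, so by Cauchy--Schwarz $|R_{H_i}(x_i,y_i)|\leq |x_i|^{H_i}|y_i|^{H_i}$. Combined with the power decay of $\rho$ this produces a factorized bound $|q(y,y')|\leq g(y)g(y')$ with $g(y)\leq C\prod_{i=1}^d(1+|y_i|)^{H_i-\beta_i}$. Applying the symmetric Cauchy--Schwarz inequality in the kernel $|s-s'|^{2H_0-2}$ together with $\int_t^T|s-s'|^{2H_0-2}ds'\leq C(T-t)^{2H_0-1}$ then gives
\[
\sigma^2 \;\leq\; C(T-t)^{2H_0-1}\int_t^T g(B_s)^2\,ds \;\leq\; C\,\Big(1+\sup_{s\in[0,T]}|B_s|\Big)^{A_+},
\]
where $A_+:=\sum_{i:\,H_i\geq\beta_i}2(H_i-\beta_i)$. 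Since $\beta_i>0$, $2(H_i-\beta_i)<2H_i-\beta_i$, and hence $A_+<\sum_i(2H_i-\beta_i)=\al<2$.

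The concluding step is an application of Fernique's theorem: the reflection principle yields $P(\sup_{[0,T]}|B_s|>x)\leq Ce^{-x^2/(2T)}$, so any strictly sub-quadratic power of the supremum has exponential moments of every order. Combined with $A_+<2$, this yields $\EE[\exp(c\sigma^2)]<\infty$ for every $c>0$, which completes the argument (the approximation $V_t^{\ep,\eta}$ from Proposition \ref{3.1} satisfies the same bound on its conditional variance, so uniform integrability justifies passing to the limit). The main obstacle is the sharp bound on $\sigma^2$: working only with the pointwise estimate \eqref{R split} (which sufficed for $L^2$-convergence) would give $\sigma^2\lesssim (1+\sup|B|)^{2\al}$ and thus force the strictly stronger hypothesis $\al<1$; it is essential to exploit the positive semidefiniteness of $R_{H_i}$, equivalently its origin as a two-point covariance of fractional Brownian motion, in order to recover the borderline condition $\al<2$ of the statement.
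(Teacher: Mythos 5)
Your proof is correct, and its skeleton is the same as the paper's: condition on $\cF^B$, use the Gaussian moment generating function together with \eqref{vtv} to reduce the claim to $\EE[\exp(c\,\var^W(V_t))]<\infty$, bound the conditional variance pathwise by a constant times $(1+\sup_{[0,T]}\sum_i|B^i|)^{\gamma}$ with $\gamma<2$, and conclude by Fernique's theorem. The one substantive difference is the pointwise bound on the spatial covariance: you invoke positive semidefiniteness of $R_{H_i}$ (it is the covariance of a two-sided fBm) to get the factorized estimate $|R_{H_i}(x,y)|\le |x|^{H_i}|y|^{H_i}$ and hence the exponent $A_+=\sum_{i:H_i\ge\beta_i}2(H_i-\beta_i)\le\al$, whereas the paper uses the elementary bound $|R_{H_i}(x,y)|\le C(|x|^{2H_i}+|y|^{2H_i})$ (the first inequality in \eqref{R split}) and reaches the exponent $\al$ in \eqref{estrhoR}. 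Your closing assertion --- that relying on \eqref{R split} would force the stronger hypothesis $\al<1$, so that positive semidefiniteness is \emph{essential} --- is not correct. The exponent doubles to $2\al$ only if one uses the final, fully factorized line of \eqref{R split}; the paper instead keeps the sum form $\prod_i(|B^i_s|^{2H_i}+|B^i_{s'}|^{2H_i})$, expands it into $2^d$ monomials, and absorbs each factor $|B^i_u|^{2H_i}$ into the decay $(1+|B^i_u|)^{-\beta_i}$ of the copy of $\rho$ evaluated at the \emph{same} time point $u$, so that each coordinate contributes $2H_i-\beta_i$ exactly once and the total exponent is $\al<2$. (Executed this way the argument is airtight; the displayed intermediate step in \eqref{estrhoR}, which first discards one factor of $\rho$ and takes suprema coordinatewise, is written loosely, but the conclusion is the one just described.) So your Cauchy--Schwarz bound buys a marginally smaller exponent $A_+\le\al$, which is a mild sharpening rather than a necessity; everything else, including the reduction via conditional Gaussianity and the Fernique step, matches the paper's proof.
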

\begin{proof}
From \eqref{vtv}  and  the first inequality in   \eqref{R split}, it follows
  \begin{align}\label{3.4star}
  I:&= \notag\EE \left[\EE^W\exp\big(\lambda\int_t^T {W}(dr,B_r)\big)\right]\\
    &= \EE \left[\exp\Big((\alpha_{H_0}\lambda^2) /2 \int_t^T \int_t^T \Big|s-r\Big|^{2H_0-2} \rho( B_s  )\rho( B_r  )\prod_{i=1}^d R_{H_i}(B_s^i,B_r^i)dsdr\Big)\right]\\
    &\notag \leq  \EE \left[\exp\Big(C(\alpha_{H_0}\lambda^2) /2\int_t^T \int_t^T \Big|s-r\Big|^{2H_0-2} \rho( B_s  )
    \rho( B_r  )\prod_{i=1}^d \Big(\big|B_s^i\big|^{2H_i}+\big|B_r^i\big|^{2H_i}\Big) dsdr\Big)\right]\,.
  \end{align}
Note that
  \begin{eqnarray}\label{estrhoR}
   &&\notag \left|\rho( B_s  )\rho( B_r )\prod_{i=1}^d \Big(\big|B_s^i\big|^{2H_i}+\big|B_r^i\big|^{2H_i}\Big) \right|  \le
    2^d\rho( B_s  )\prod_{i=1}^d\sup_{s\in[t,T]}\big|B_s^i\big|^{2H_i} \,\\
    &&   \leq 2^d \prod_{i=1}^d \big(1+ \sup_{s\in[t,T]} |B_s^i |\big)^{ 2H_i-\beta_i} \le 2^d
     \big(1+ \sup_{s\in[t,T]} \sum_{i=1}^d |B_s^i |\big)^{\sum_{i=1}^d 2H_i-\beta_i}\\
    &&\notag  =:  C_d\big(1+ \sup_{s\in[t,T]} \sum_{i=1}^d |B_s^i |\big)^{\alpha}.
  \end{eqnarray}
We have
\begin{align*}
I \leq \EE\left[\exp\left(C \int_t^T \int_t^T \Big|s-r\Big|^{2H_0-2} dsdr \cdot \Big(1+   \sup_{s\in[t,T]} \sum_{i=1}^d |B_s^i |\Big)^{\alpha} \right)  \right]\,,
\end{align*}
which is finite thanks to Fernique's theorem (e.g. \cite[Theorem 4.14]{H2016}) since $\alpha<2$,
completing the proof of the proposition.
\end{proof}

\section{Linear backward stochastic differential equation}
\label{s.3}
Now we   consider  the backward
stochastic differential equation \eqref{e1}. In order to study the regularity of $(Y,Z)$,  we approximate
 it by \eqref{nsp} and obtain    the following approximation of   \eqref{e1}:
\begin{equation}\label{e appro1}
  \begin{split}
    Y_t^{\varepsilon,\eta}=\xi+\int_t^T Y_s^{\varepsilon,\eta} {\dot{W}}_{\varepsilon,\eta}(s,B_s)ds -\int_t^T Z_s^{\varepsilon,\eta}dB_s,\quad  t\in [0,T]\,.
  \end{split}
\end{equation}
  Due to the regularity
of the approximated noise $ \dot{W}  _{\varepsilon,\eta}$ and Proposition  \ref{exp int V}, we can explicitly  express  its  solution    as follows  (see e.g.
\cite{HNS2011} and references therein):
\begin{equation}
\begin{cases}
Y_t^{\varepsilon, \eta}  =\EE \left[\xi   \exp\big(\int_t^T   \dot{W}_{\varepsilon, \eta} (r,B_r)dr\big)\,\big|\,\mathcal{F}_t\right]
 &  \hbox{  by \cite[Equation (2.11)]{hu2011malliavin} }\,,
 \\
Z_t^{\varepsilon, \eta}   =D_t^B
\EE \left[\xi   \exp\big(\int_t^T  \dot {W}_{\varepsilon, \eta} (r,B_r)dr\big)\,\big|\,\mathcal{F}_t\right]
&  \hbox{  by \cite[Equation (2.23)]{hu2011malliavin} }\,,
\end{cases}  \label{so appro}
\end{equation}
where  $
D^B  =(D^{B^1} , \cdots,  D^{B^d})^T$ is the Malliavin gradient
 operator   with respect to the Brownian motion $B $, so that $Z^{\varepsilon, \eta}_t$ is a $d$-dimensional vector.

We have proved
   that $\int_t^T {{W}}(ds,B_s)$ is  exponentially  integrable  in Proposition \ref{exp int V}.  Then we can define
\begin{equation}\label{so}
Y_t := \EE\Big[\xi \exp\big(\int_t^T {W}(dr,B_r)
    \big)\,\big|\,\mathcal{F}_t\Big].
\end{equation}

\begin{lem}\label{Y converge}
Assume  $\xi\in L^q(\Om)$ for some $q>2$.
  Then for any $t\in [0,T]$, we have $Y_t^{\ep,\eta}$ converges to $Y_t$ in $L^p(\Omega)$
  for all $p\in [1, q)$.
\end{lem}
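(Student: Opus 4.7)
The plan is to reduce the lemma to the $L^p$-convergence of a single product via conditional Jensen and then apply Vitali's theorem. Writing $V_t^{\ep,\eta} := \int_t^T \dot W_{\ep,\eta}(r,B_r)\,dr$ and $V_t := \int_t^T W(dr,B_r)$, one has
\[
|Y_t^{\ep,\eta} - Y_t|^p \le \EE\big[|\xi|^p\, |e^{V_t^{\ep,\eta}} - e^{V_t}|^p \,\big|\, \cF_t\big]
\]
by conditional Jensen, so it suffices to show $\xi(e^{V_t^{\ep,\eta}} - e^{V_t}) \to 0$ in $L^p(\Omega)$. Convergence in probability is immediate from Proposition \ref{3.1} (which gives $V_t^{\ep,\eta} \to V_t$ in $L^2$) combined with the continuous mapping theorem, so by Vitali it remains only to produce some $p' \in (p,q)$ and a constant $C$ independent of $\ep,\eta$ with $\EE|\xi e^{V_t^{\ep,\eta}}|^{p'} \le C$.

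By H\"older with the conjugate pair $q/p'$ and $q/(q-p')$, this uniform bound reduces to controlling $\EE \exp(\lambda V_t^{\ep,\eta})$ for $\lambda := p'q/(q-p')$. The key observation is that, conditionally on $\cF^B$, the random variable $V_t^{\ep,\eta}$ is a mean-zero Gaussian, since it is a Wiener integral of a $\cF^B$-measurable (in fact smooth) kernel against the Gaussian field $W$. Therefore
\[
\EE^W e^{\lambda V_t^{\ep,\eta}} = \exp\!\Big(\tfrac{\lambda^2}{2}\var^W(V_t^{\ep,\eta})\Big),
\]
and the main obstacle is to bound the conditional variance uniformly in $\ep,\eta$. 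The plan here is to retrace the computation in the proof of Proposition \ref{3.1}: apply \eqref{estphieta} to remove the mollifiers $\varphi_\eta,\varphi_{\eta'}$ at the cost of the factor $|s-s'|^{2H_0-2}$, and estimate $I_1(\ep,\ep,s,s')$ from \eqref{Eep} as in \eqref{R split} after using $|\sqrt\ep X_i + B_s^i| \le |X_i|+|B_s^i|$ (valid for $\ep\in(0,1)$) and integrating out the Gaussian variables $X,X'$. Invoking then the pathwise estimate \eqref{estrhoR} yields
\[
\var^W(V_t^{\ep,\eta}) \le C\,\big(1 + \sup_{s \in [t,T]}\textstyle\sum_i |B_s^i|\big)^{\alpha}\,(T-t)^{2H_0}, \qquad \alpha < 2,
\]
uniformly in $\ep,\eta$.

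Fernique's theorem (exactly as invoked in the proof of Proposition \ref{exp int V}) then gives $\sup_{\ep,\eta}\EE e^{\lambda V_t^{\ep,\eta}} < \infty$, which combined with $\xi \in L^q$ produces the desired uniform $L^{p'}$-bound. Vitali's convergence theorem now delivers $\xi e^{V_t^{\ep,\eta}} \to \xi e^{V_t}$ in $L^p$, and hence $Y_t^{\ep,\eta}\to Y_t$ in $L^p(\Omega)$. The only technical point requiring care is the uniformity in $(\ep,\eta)$ of the conditional-variance bound; once this is tracked through the proof of Proposition \ref{3.1}, the rest is a standard Gaussian-plus-Fernique argument.
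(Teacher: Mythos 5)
Your proposal is correct and follows essentially the same route as the paper: conditional Jensen plus H\"older to reduce the problem to a uniform-in-$(\ep,\eta)$ exponential moment bound for $V_t^{\ep,\eta}$, which is obtained by conditioning on $B$, bounding the Gaussian conditional variance via the estimates from Proposition \ref{3.1}, and applying Fernique, after which convergence in probability upgrades to $L^p$ convergence. Your use of Vitali's theorem and your explicit tracking of the uniformity of the conditional-variance bound merely make precise what the paper compresses into ``similar to Proposition \ref{exp int V}'' and ``Lebesgue's convergence theorem.''
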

\begin{proof}
Denote $V_t^{\varepsilon,\eta}=\int_t^T {\dot{W}}_{\varepsilon,\eta}(s,B_s)ds$.
Let $q'p=q$ and $1/p'+1/q'=1$.
From \eqref{so appro}, \eqref{so}, Jensen's  inequality and H\"older's  inequality it follows
\begin{eqnarray}\label{expVt}
  \EE  \left[\Big|Y_t^{\ep,\eta}-Y_t\Big|^p \right] &=& \EE  \bigg| \EE \Big[\xi   \Big(\exp\big(V_t^{\varepsilon,\eta} \big)
  -  \exp\big( V_t  \big)\Big)  \Big|\mathcal{F}_t  \Big]  \bigg|^p \notag\\
&\le & \EE \left[   |\xi |^p \left|  \exp\big(V_t^{\varepsilon,\eta} \big)
  -  \exp\big( V_t  \big)  \right|^p\right]  \\
&\le & \| \xi \Vert_{q}^{1/q'}
\left[ \EE    \Big|\exp\big(V_t^{\varepsilon,\eta} \big)-  \exp\big(V_t\big)  \Big| ^{pp'} \right]^{1/p'}.\notag
\end{eqnarray}
Similar to  Proposition \ref{exp int V} we can prove
\[
\sup_{\varepsilon,\eta\in (0, 1] }\EE  \Big|\exp(\lambda V_t^{\varepsilon,\eta})\Big|<\infty,
\quad \forall \ \la\in \RR\,.
\]
Proposition \ref{3.1}  implies that
$V_t^{\varepsilon,\eta}\to V_t$ in probability. Thus, we prove the
proposition by  Lebesgue's convergence theorem.
\end{proof}

%
Let us denote
\begin{equation*}
  \begin{split}
  \mathcal{S}_{\mathbb{F}}^{p}(0, T ; \mathbb{R}):= \Big\{\psi=\left(\psi_{s}\right)_{s \in[0, T]}:&\text {$\psi$ is a real-valued}\  \cF   \text {-adapted} \\
 & \text { continuous process; }
  \EE\left[\sup_{0 \leq s \leq T}\left|\psi_{s}\right|^{p}\right]<\infty \Big\}.
  \end{split}
\end{equation*}
To prove $Y=\{Y_t, t\in[0,T]\}\in \mathcal{S}^p_{\mathbb{F} }(0,T;\RR)$ for all $p\in [1, q)$, we shall first recall  Talagrand's majorizing measure theorem.
\begin{lem}\label{majorizing}
(Majorizing Measure Theorem, see e.g.  [\cite[Theorem 2.4.2]{talagrand2014upper}]. Let $T$ be a given set and let $\left\{X_{t}, t \in T\right\}$ be a centred Gaussian process indexed by $T$. Denote by $d(t, s)=\left(\mathbb{E}\left|X_{t}-X_{s}\right|^{2}\right)^{\frac{1}{2}}$  the associated natural metric on $T$. Then
$$
\mathbb{E}\left[\sup _{t \in T} X_{t}\right] \asymp \gamma_{2}(T, d):=\inf _{\mathcal{A}} \sup _{t \in T} \sum_{n \geq 0} 2^{n / 2} \operatorname{diam}\left(A_{n}(t)\right),
$$
where ''$\asymp$'' indicates the asymptotic notation. Note that the infinimum is taken over all increasing sequence $\mathcal{A}:=\left\{\mathcal{A}_{n}, n=1,2, \cdots\right\}$ of partitions of $T$ such that \#$ \mathcal{A}_{n} \leq 2^{2^{n}}$ (\#A denotes the number of elements in the set $A$ ), $A_{n}(t)$ denotes the unique element of $\mathcal{A}_{n}$ that contains $t$, and $\operatorname{diam}\left(A_{n}(t)\right)$ is the diameter (with respect to the natural distance $d(\cdot,\cdot)$ ) of $A_{n}(t)$.
\end{lem}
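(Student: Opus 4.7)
The plan is to prove the asymptotic equivalence $\mathbb{E}[\sup_{t\in T} X_t]\asymp \gamma_2(T,d)$ in two directions, noting that the statement is quoted from Talagrand's book so the argument I sketch is really an outline of the classical proof. The upper bound is a chaining argument and is relatively soft; the matching lower bound (the genuine ``majorizing measure theorem'') is the deep ingredient and requires a delicate recursive partitioning driven by Sudakov minoration.

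For the upper bound $\mathbb{E}[\sup_{t\in T} X_t]\leq K\gamma_2(T,d)$ I would fix any admissible sequence $\mathcal{A}=\{\mathcal{A}_n\}$ and pick a representative $\pi_n(t)\in A_n(t)$. Since $\mathcal{A}_0$ is a single set, one can write the telescoping identity
\begin{equation*}
X_t - X_{\pi_0(t)} \;=\; \sum_{n\geq 0}\bigl(X_{\pi_{n+1}(t)}-X_{\pi_n(t)}\bigr),
\end{equation*}
and for each summand invoke the Gaussian tail bound $\mathbb{P}(|X_s-X_{s'}|\geq u)\leq 2\exp(-u^2/(2d(s,s')^2))$. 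Setting the threshold at level $n$ proportional to $2^{n/2}\,\operatorname{diam}(A_n(t))$ and performing a union bound over the at most $\#\mathcal{A}_{n+1}\cdot\#\mathcal{A}_n\leq 2^{2^{n+2}}$ candidate pairs, the bound $2^{2^{n+1}}\exp(-c\,2^n)$ is summable and yields $\mathbb{E}[\sup_t (X_t - X_{\pi_0(t)})]\leq K\sup_t\sum_n 2^{n/2}\operatorname{diam}(A_n(t))$. Taking the infimum over admissible $\mathcal{A}$ gives the upper half.

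For the lower bound $\mathbb{E}[\sup_{t\in T} X_t]\geq c\,\gamma_2(T,d)$ I would follow Talagrand's ``growth condition / functional'' approach. Define $F(H):=\mathbb{E}[\sup_{t\in H}X_t]$ on subsets $H\subset T$. The key input is \emph{Sudakov minoration}: if $H$ contains $N$ points that are pairwise $d$-separated by at least $\delta$, then $F(H)\geq c\,\delta\sqrt{\log N}$. Using this, I would build the partition $\mathcal{A}_{n+1}$ from $\mathcal{A}_n$ by, inside each $A\in\mathcal{A}_n$, greedily extracting a maximal family of $2^{-n/2}\operatorname{diam}(A)$-separated balls and charging the increment $2^{n/2}\operatorname{diam}(A_n(t))$ against the gap $F(A)-\max_i F(A_{n+1,i})$. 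Iterating and summing telescopically, the total cost $\sup_t\sum_n 2^{n/2}\operatorname{diam}(A_n(t))$ is bounded by an absolute multiple of $F(T)=\mathbb{E}[\sup_tX_t]$, which is exactly the desired inequality.

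The hard part is unquestionably the lower bound: Sudakov minoration alone is not enough, because one needs a \emph{recursive} version that survives after refining at each scale. The technical heart is Talagrand's partitioning theorem (Theorem 2.3.16 in the cited book), which requires verifying the growth condition for the functional $F$, and this in turn combines Sudakov minoration with Borell-TIS concentration to control the residual supremum after one extracts well-separated sub-balls. Once this is in place the inductive construction goes through cleanly, but verifying the constants and the growth condition is where the real work lies. In the present paper no new argument is needed, and I would simply cite \cite[Theorem 2.4.2]{talagrand2014upper} after recording the above sketch.
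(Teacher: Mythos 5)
The paper offers no proof of this lemma at all: it is quoted verbatim as a known result with the citation to \cite[Theorem 2.4.2]{talagrand2014upper}, which is exactly where your argument also lands. Your sketch is a faithful outline of the classical proof in that reference (generic chaining for the upper bound, Sudakov minoration plus the recursive partitioning/growth-functional scheme for the lower bound), and it correctly locates the genuine difficulty in the lower bound, so nothing further is needed.
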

We shall apply the above majorizing measure theorem to  $V_t = \int_t^T {{W}}(ds,B_s)$ as a random variable of $W$ (which is Gaussian under the conditional law knowing $B$).  The associated natural metric  (which is a random variable of $B$) is
(assuming  $t > s$)
\begin{equation}\label{dst}
\begin{split}
d(t,s) &:= \sqrt {(\EE^W |V_t-V_s|^2)} = \sqrt {(\EE^W |\int_s^t W(dr,B_r)|^2)}\\
&= \sqrt { \int_s^t\int_s^t \alpha_{H_0} |u-v|^{2H_0 -2} \rho (B_u)\rho(B_r) \prod_{i=1}^d R_{H_i} (B_u^i, B_r^i) dudv}\\
&\leq C_H \upnu(B) |t-s|^{H_0},
\end{split}
\end{equation}
where $C_H$ is a constant depending only $H_i,\ i=1,\ldots, d$ and
\begin{equation}\label{upB}
\upnu(B) := C_d  (1\!+\! \sup_{u\in [0,T]}\sum_{i=1}^d |B_u^i|)^{\alpha}.
\end{equation}
Next, we   choose the admissible sequences $(\mathcal{A}_n)$ as uniform partition of $[0,T]$ such that  $\#( \mathcal{A}_n)\leq 2^{2^n} $:  
$$[0, T]= \bigcup_{j=0}^{2^{2^{n-1}}-1}\left[j \cdot 2^{-2^{n-1}} T,(j+1) \cdot 2^{-2^{n-1}} T\right).$$
Thus, we can deduce that, by Lemma \ref{majorizing},
\begin{equation}\label{supV}
\EE^W \big[\sup_{t\in[0,T]} V_t \big] \leq C \sup_{t\in[0,T]} \sum_{n\geq 0} 2^{n/2} \operatorname{diam}\left(A_{n}(t)\right)\\
\end{equation}
where $A_{n}(t)$ is the element of uniform partition $\mathcal{A}_{n}$ that contains $(t)$, i.e.,
$$
A_{n}(t)=\left[j \cdot 2^{-2^{n-1}} T,(j+1) \cdot 2^{-2^{n-1}} T\right)
$$
such that $j \cdot 2^{-2^{n-1}} T \leq t<(j+1) \cdot 2^{-2^{n-1}} T$.\\
Since $(\mathcal{A}_n)$ is a  uniform partition, and  by using the bound    \eqref{dst} we see the  diameter of $A_n(t)$ with respect to $d(t,s)$  can be estimated by
$$\mbox{diam}\big(A_n(t)\big)\leq C_H \upnu (B)2^{-H_0 2^{n-1}} T^{H_0}.$$
Inserting  this result  into   \eqref{supV}, we have
\begin{equation}\label{est supV}
\begin{split}
\EE^W \big[\sup_{t\in[0,T]} V_t\big] &\leq C \sup_{t\in[0,T]} \sum_{n\geq 0} 2^{n/2} \operatorname{diam}\left(A_{n}(t)\right)\\
&\leq C_H \upnu (B) T^{H_0} \sum_{n\geq 0} 2^{n/2} 2^{-H_0 2^{n-1}}  \leq C_H \upnu (B) T^{H_0}.
\end{split}
\end{equation}

We also need  the following two results to show $Y\in  \mathcal{S}_{\mathbb{F}}^{p}(0, T ; \mathbb{R})$.
\begin{lem}\label{Borell}
(Borell-TIS inequality, see e.g. \cite[Theorem 2.1]{adler1990introduction}). Let $\left\{X_{t}, t \in T\right\}$ be a centered separable Gaussian process on some topological index set $T$ with almost surely bounded sample paths. Then $\mathbb{E}\left(\sup _{t \in T} X_{t}\right)<\infty$, and for all $\lambda>0$,
$$
\mathbf{P}^W\left\{\left|\sup _{t \in T} X_{t}-\mathbb{E}\left(\sup _{t \in T} X_{t}\right)\right|>\lambda\right\} \leq 2 \exp \left(-\frac{\lambda^{2}}{2 \sigma_{T}^{2}}\right),
$$
where $\sigma_{T}^{2}:=\sup _{t \in T} \mathbb{E}\left(X_{t}^{2}\right).$
\end{lem}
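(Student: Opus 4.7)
The plan is to reduce to the finite-dimensional case via separability and then invoke the Gaussian concentration-of-measure inequality for Lipschitz functions, which in turn is a consequence of the Borell isoperimetric inequality on Gaussian space (or, alternatively, of the Gross logarithmic Sobolev inequality via a Herbst argument).

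First, by separability of $\{X_t\}_{t\in T}$, I would choose a countable dense subset $T_0\subset T$ such that $\sup_{t\in T}X_t=\sup_{t\in T_0}X_t$ almost surely, and exhaust $T_0$ by an increasing sequence of finite sets $T_n\uparrow T_0$. Set $M_n:=\sup_{t\in T_n}X_t$, so that $M_n\uparrow M:=\sup_{t\in T}X_t$ almost surely. For each fixed $n$, writing $T_n=\{t_1,\ldots,t_N\}$, I would represent the Gaussian vector $(X_{t_1},\ldots,X_{t_N})$ as $A\xi$, where $\xi\sim\mathcal{N}(0,I_m)$ is standard Gaussian (for $m$ large enough) and the rows $a_i$ of $A$ satisfy $\|a_i\|^2=\mathbb{E}[X_{t_i}^2]\leq\sigma_T^2$. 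Then $M_n=f_n(\xi)$ with $f_n(x):=\max_i\langle a_i,x\rangle$, and $f_n$ is $\sigma_T$-Lipschitz because $|f_n(x)-f_n(y)|\leq\max_i|\langle a_i,x-y\rangle|\leq\sigma_T\|x-y\|$.

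Applying Gaussian concentration of measure to $f_n$ would yield $\mathbf{P}(|M_n-\mathbb{E}M_n|>\lambda)\leq 2\exp(-\lambda^2/(2\sigma_T^2))$, with the tail bound uniform in $n$. Since $M_n\uparrow M$ almost surely and $M<\infty$ almost surely by hypothesis, the uniform sub-Gaussian tails force $\{\mathbb{E}M_n\}$ to remain bounded, so $\mathbb{E}M_n\to\mathbb{E}M\in\mathbb{R}$ by monotone convergence and in particular $\mathbb{E}M<\infty$. Passing $n\to\infty$ with the help of Fatou's lemma then transfers the concentration bound from $M_n$ to $M$, giving the stated inequality.

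The main obstacle, and really the heart of the argument, is the Gaussian concentration inequality for Lipschitz functions, which I would not reprove but cite directly. Its classical derivation uses the Gaussian isoperimetric inequality---the measure-theoretic statement that half-spaces minimize the enlargement measure among sets of given Gaussian mass---applied to the sublevel sets of $f_n$ around its median; the expectation version then follows by comparing the median and the mean, with the small discrepancy absorbed into the universal constants.
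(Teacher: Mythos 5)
The paper does not actually prove this lemma: it is quoted verbatim as a classical result with a pointer to Adler's lecture notes, so there is no in-paper argument to compare against. Your outline is the standard textbook derivation and is essentially correct: separability reduces the statement to finite index sets; writing the finite-dimensional Gaussian vector as $A\xi$ with $\xi$ standard normal makes $M_n=\max_i\langle a_i,\xi\rangle$ a Lipschitz function of $\xi$ with constant $\max_i\|a_i\|\le\sigma_T$; Gaussian concentration then gives the tail bound uniformly in $n$; and your observation that the uniform sub-Gaussian tails together with $M<\infty$ a.s. force $\sup_n\EE M_n<\infty$, hence $\EE M<\infty$ and $\EE M_n\to\EE M$ by monotone convergence, is the right way to close the loop before transferring the bound to $M$ via Fatou (taking $\lambda'<\lambda$ and letting $\lambda'\uparrow\lambda$ to handle the strict inequality). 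The one imprecise point is your final sentence: if you derive concentration around the \emph{median} from the Gaussian isoperimetric inequality and then shift to the mean, you cannot ``absorb the discrepancy into universal constants,'' because the lemma asserts the specific bound $2\exp(-\lambda^{2}/(2\sigma_{T}^{2}))$ with no free constants; the mean-centred inequality with this sharp constant is a genuine theorem (Borell, Cirel'son--Ibragimov--Sudakov) whose standard proofs argue directly about the mean (e.g.\ via a stochastic-calculus or semigroup interpolation argument) rather than passing through the median. Since you cite the concentration inequality as a black box anyway, this does not invalidate your argument, but the parenthetical account of where the constant comes from should be corrected.
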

%
\begin{lem}\label{abssupV}
If the process $\left\{X_{t}, t \in T\right\}$ is symmetric, then we have
\begin{equation}\label{absVeqV}
\mathbb{E}\left[\sup _{t \in T}\left|X_{t}\right|\right] \leqslant 2 \mathbb{E}\left[\sup _{t \in T} X_{t}\right]+\inf _{t_{0} \in T} \mathbb{E}\left[\left|X_{t_{0}}\right|\right].
\end{equation}
\end{lem}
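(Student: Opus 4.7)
The plan is to fix an arbitrary reference point $t_0 \in T$ and apply the triangle inequality $|X_t| \leq |X_{t_0}| + |X_t - X_{t_0}|$ to reduce the claim to bounding $\mathbb{E}[\sup_{t \in T} |X_t - X_{t_0}|]$. The point of centering the process at $t_0$ is that the shifted process $Y_t := X_t - X_{t_0}$ vanishes at $t = t_0$, so both $\sup_t Y_t$ and $\sup_t(-Y_t)$ are automatically non-negative. This lets us use the elementary bound $\sup_t |Y_t| = \max(\sup_t Y_t, \sup_t(-Y_t)) \leq \sup_t Y_t + \sup_t(-Y_t)$.

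Next I would invoke the symmetry hypothesis, which passes from $\{X_t\}$ to $\{Y_t\}$ since subtracting a fixed coordinate $X_{t_0}$ from a process with the property $\{X_t\}_{t \in T} \stackrel{d}{=} \{-X_t\}_{t \in T}$ yields a process with the same property. Consequently $\sup_t Y_t$ and $\sup_t(-Y_t)$ have the same distribution, so taking expectations gives $\mathbb{E}[\sup_t |Y_t|] \leq 2\,\mathbb{E}[\sup_t Y_t]$.

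Finally, I would rewrite $\mathbb{E}[\sup_t Y_t] = \mathbb{E}[\sup_t X_t] - \mathbb{E}[X_{t_0}]$, and use that symmetry of the marginal $X_{t_0}$ forces $\mathbb{E}[X_{t_0}] = 0$ whenever $X_{t_0}$ is integrable (which we may assume, since otherwise the right-hand side of \eqref{absVeqV} is already infinite). Combining everything and then taking the infimum over $t_0 \in T$ delivers the desired inequality.

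I do not expect a serious obstacle here; the argument is essentially a symmetrization trick. The only mild points of care are the measurability of the suprema (handled by the separability assumption implicit in the applications where this lemma is used, in particular via Lemma \ref{Borell}) and the justification that the centered process inherits symmetry, which is immediate from the definition.
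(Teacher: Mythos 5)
Your argument is correct and complete: centering at $t_0$ so that $Y_{t}=X_t-X_{t_0}$ vanishes at $t_0$, using $\sup_t|Y_t|\le \sup_t Y_t+\sup_t(-Y_t)$ together with the inherited symmetry of $Y$, and then $\mathbb{E}[\sup_t Y_t]=\mathbb{E}[\sup_t X_t]$ since $\mathbb{E}[X_{t_0}]=0$, is exactly the standard symmetrization proof of this inequality. The paper states this lemma without any proof (treating it as a known fact from the Gaussian/symmetric process literature), so there is nothing to compare against; your write-up, including the remarks on integrability of $X_{t_0}$ and separability for measurability of the suprema, would serve as a valid proof of the statement as used in the paper.
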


Now we can state and prove  one of the main results of this  work.
\begin{thm}\label{sup Y}
  Suppose $\xi\in L^q(\Om)$ for some $q>2$  and
 suppose that  \eqref{e.2.7}  holds.
Then we have $Y^{\ep,\eta}$ converges to
   $Y=\{Y_t, t\in[0,T]\}\in \mathcal{S}^p_{\mathbb{F} }(0,T;\RR)$ for all $p\in [1, q)$.
\end{thm}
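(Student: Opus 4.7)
\textbf{Proof plan for Theorem \ref{sup Y}.}  The idea is to dominate $|Y_t|$ by a genuine $(\mathcal{F}_t)$-martingale and then invoke Doob's $L^{p}$ maximal inequality, reducing everything to an exponential moment estimate for $\sup_{u\in[0,T]}V_u$.  Since $\exp$ is positive and non-decreasing,
\[
|Y_t|\;\le\;\EE\!\left[\,|\xi|\,\exp(V_t)\,\big|\,\cF_t\right]
\;\le\;\EE\!\left[\,|\xi|\,\exp\!\Big(\sup_{u\in[0,T]} V_u\Big)\,\Big|\,\cF_t\right]\,=:\,N_t,
\]
and the process $N_t$ is an $(\cF_t)$-martingale because the random variable inside the conditional expectation is $\cF_T$-measurable and does not depend on $t$. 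Doob's inequality thus gives
\[
\EE\!\left[\sup_{0\le t\le T}|Y_t|^{p}\right]
\;\le\;C_{p}\,\EE\!\left[\,|\xi|^{p}\exp\!\Big(p\sup_{u}V_u\Big)\right].
\]
Writing $\lambda:=pq/(q-p)$ and applying H\"older's inequality with exponents $q/p$ and $q/(q-p)$, the right-hand side is bounded by $\Vert\xi\Vert_{q}^{p}\,\bigl(\EE\bigl[\exp(\lambda\sup_{u}V_u)\bigr]\bigr)^{(q-p)/q}$, so it suffices to establish that $\EE[\exp(\lambda\sup_{u}V_u)]<\infty$ for every $\lambda>0$.

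The plan for this exponential bound is to proceed in two steps.  Conditional on $\cF^{B}$, the process $\{V_u\}_{u\in[0,T]}$ is centred Gaussian; the majorizing-measure argument already carried out in \eqref{est supV} gives $\EE^{W}\bigl[\sup_{u}V_u\bigr]\le C_{H}\upnu(B)T^{H_{0}}$, and the natural metric estimate in \eqref{dst} implies $\sigma_{T}^{2}:=\sup_{u}\EE^{W}[V_u^{2}]\le C_{H}\upnu(B)^{2}T^{2H_{0}}$.  By the Borell--TIS inequality (Lemma \ref{Borell}), the random variable $\sup_{u}V_u-\EE^{W}[\sup_{u}V_u]$ is conditionally sub-Gaussian with parameter $\sigma_{T}^{2}$; integrating the Gaussian tail,
\[
\EE^{W}\!\left[\exp\!\Big(\lambda\sup_{u}V_u\Big)\right]
\;\le\;C\,\exp\!\Big(\lambda\,\EE^{W}[\sup_{u}V_u]+C\lambda^{2}\sigma_{T}^{2}\Big)
\;\le\;\exp\!\Big(c_{1}(\lambda)\upnu(B)+c_{2}(\lambda)\upnu(B)^{2}\Big).
\]
Taking expectation with respect to $B$, because $\upnu(B)=C_{d}(1+\sup_{u}\sum_{i}|B_u^{i}|)^{\alpha}$ with $\alpha<2$ by hypothesis \eqref{e.2.7}, the sub-Gaussian concentration of $\sup_{u}\sum_{i}|B_u^{i}|$ together with Fernique's theorem (exactly as invoked at the end of the proof of Proposition \ref{exp int V}) yields $\EE\bigl[\exp(C\upnu(B)+C\upnu(B)^{2})\bigr]<\infty$ for every $C>0$.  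Inserting this into the bound above completes the proof that $Y\in\mathcal{S}^{p}_{\mathbb{F}}(0,T;\RR)$.

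For the convergence $Y^{\ep,\eta}\to Y$ in $\mathcal{S}^{p}$, the same domination strategy applies to the difference: using $|e^{a}-e^{b}|\le(e^{a}+e^{b})|a-b|$,
\[
|Y_t^{\ep,\eta}-Y_t|\;\le\;\EE\!\left[\,|\xi|\bigl(e^{\sup_u V_u^{\ep,\eta}}+e^{\sup_u V_u}\bigr)\sup_u|V_u^{\ep,\eta}-V_u|\,\Big|\,\cF_t\right],
\]
and the right-hand side is again an $(\cF_t)$-martingale so that Doob applies.  A H\"older split of the three factors and the uniform (in $\ep,\eta$) exponential moment bounds produced by the argument above reduce the task to showing $\sup_u|V_u^{\ep,\eta}-V_u|\to 0$ in $L^{p}$, which follows from Proposition \ref{3.1} together with uniform integrability supplied by the same Borell--TIS plus Fernique estimates applied to $V^{\ep,\eta}$.

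\textbf{Main obstacle.}  The serious step is the supremum exponential moment, not the pointwise one handled in Proposition \ref{exp int V}.  The delicate bookkeeping is to verify that, after conditioning on $B$ and applying Borell--TIS, the $B$-dependent coefficient appearing in the exponent grows no faster than $(\sup_{u}|B_u|)^{\alpha'}$ for some $\alpha'<2$, which is exactly what allows Fernique's theorem to be used with the given assumption $\alpha<2$; any worse growth would force a strictly stronger hypothesis.  Everything else amounts to standard martingale and dominated-convergence arguments.
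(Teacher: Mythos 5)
Your overall strategy is the same as the paper's: dominate $\sup_t|Y_t|$ via Doob and H\"older, reduce to an exponential moment of $\sup_u V_u$, control $\EE^W[\sup_u V_u]$ by the majorizing measure theorem, get the conditional tail from Borell--TIS, and finish with Fernique. However, there is a concrete error at exactly the point you flag as ``the delicate bookkeeping.'' You derive $\sigma_T^2=\sup_u\EE^W[V_u^2]\le C_H\,\upnu(B)^2\,T^{2H_0}$ by squaring the metric bound \eqref{dst}. Since $\upnu(B)=C_d(1+\sup_u\sum_i|B_u^i|)^{\alpha}$, this puts a term of order $(1+\sup_u\sum_i|B_u^i|)^{2\alpha}$ into the exponent after Borell--TIS, and the hypothesis \eqref{e.2.7} only guarantees $\alpha<2$. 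Whenever $1<\alpha<2$ one has $2\alpha>2$, and then $\EE\bigl[\exp\bigl(C\upnu(B)^2\bigr)\bigr]=\infty$ for \emph{every} $C>0$; Fernique's theorem cannot rescue an exponent strictly larger than $2$ in $\sup|B|$. So the claim ``$\EE[\exp(C\upnu(B)+C\upnu(B)^2)]<\infty$ for every $C>0$'' is false in the allowed parameter range, and your argument breaks precisely where you predicted it might.

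The repair is to bound $\sigma_T^2$ directly from \eqref{vtv} and \eqref{estrhoR} rather than through \eqref{dst}: the integrand $\rho(B_s)\rho(B_{s'})\prod_i R_{H_i}(B_s^i,B_{s'}^i)$ is itself bounded by $C(1+\sup_u\sum_i|B_u^i|)^{\alpha}$, so $\sigma_T^2\le C_{T,H}\,\upnu(B)$ to the \emph{first} power (this is the paper's \eqref{sigmaT}; note that \eqref{dst} is itself a lossy bound --- the sharp version carries $\sqrt{\upnu(B)}$, which is why squaring it overestimates). With $\sigma_T^2\lesssim\upnu(B)$ the whole exponent is $O\bigl((1+\sup_u\sum_i|B_u^i|)^{\alpha}\bigr)$ with $\alpha<2$ and Fernique applies, completing the argument. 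One further remark: for the convergence $Y^{\ep,\eta}\to Y$ in $\mathcal{S}^p$ you reduce to $\sup_u|V_u^{\ep,\eta}-V_u|\to0$ in $L^p$, but Proposition \ref{3.1} only gives convergence of $V_t^{\ep,\eta}$ for each fixed $t$; making the supremum converge requires a chaining/Borell argument applied to the difference field (the paper also leaves this step as ``routine''), so you should at least acknowledge that this is not an immediate consequence of the pointwise statement.
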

\begin{proof}
We just need to verify $Y_t \in \mathcal{S}^p_{\mathbb{F} }(0,T;\RR)$. Let $q'p=q$ and $1/p'+1/q'=1$. By  \eqref{so appro} and  Jessen's inequality and Doob's martingale inequality we see
  \begin{eqnarray*}
    \EE    \big| \sup_{t\in[0,T]} Y_t \big|^p &=& \EE  \Big| \sup_{t\in[0,T]} \EE^B\Big[\xi \exp\big(\int_t^T
      {{W}}(ds ,B_s)\big)\,\big|\,\mathcal{F}_t^B\Big] \Big|^p \\
      &\leq& \EE\left|\sup_{t\in[0,T]}\EE^B\left[|\xi|^p \exp\{ p \sup_{t\in[0,T]}|V_t|\}\,\big|\,\cF_t^B \right] \right|\\
    &\leq&  \left(\frac{p}{p-1} \right)^p \| \xi\|_{q}^{1/q'} \Big[\EE\exp\big( pp'\,\sup_{t\in[0,T]}|V_t|\big)\Big]^{1/p'}.
  \end{eqnarray*}
Denote by $\|V\|_T :=  \sup_{t\in[0,T]}V_t  $. From Lemma \ref{Borell} and Lemma \ref{abssupV} it follows  for all $\lambda >0$,
\begin{equation}
\mathbf{P}^W\left\{\left|\|V\|_T-\EE^W\|V\|_T\right| \geq \lambda\right\} \leq 2 \exp\left(-\frac{\lambda^2}{2\sigma_T^2}\right), \label{e.3.8}
\end{equation}
The above term  $\sigma_T^2  $  is  defined and bounded by
\begin{equation}\label{sigmaT}
\begin{split}
\sigma_T^2= \sup_{t\in[0,T]} \EE^W [|V_t|^2]  \leq C_{T, H_0}  \sup_{u\in[0,T]}\rho(B_u)\prod_{i=1}^d |B_u^i|^{2H_i} \leq C_{T, H_0,d}\big(1+ \sup_{s\in[t,T]} \sum_{i=1}^d |B_s^i |\big)^{\alpha},
\end{split}
\end{equation}
by \eqref{vtv} and \eqref{estrhoR}.  From \eqref{e.3.8}  we have for any $m>0$,
\begin{align}\label{estYinS2}
\EE^W\Big[\exp\big( m\|V\|_T\big)\Big] &= \EE^W\left[\exp\left\{ m(\|V\|_T-\EE^W[\|V\|_T])\right\}\right]\cdot \exp\left\{m\EE^W[\|V\|_T]\right\}\notag\\
&\leq m\exp\left\{m\EE^W[\|V\|_T]\right\} \int^\infty_0 e^{m\lambda}\mathbf{P}\left(\left|\|V\|_T-\EE^W[\|V\|_T]\right|\geq \lambda\right)d\lambda\notag\\
&\leq 2m\exp\left\{m\EE^W[\|V\|_T]\right\}  \int^\infty_0 e^{m\lambda}\cdot e^{-\frac{\lambda^2}{2\sigma_T^2}} d\lambda\\
&\leq 2\sqrt{2\pi} m\cdot \sigma_T  \exp\left\{mC_H T^{H_0}\upnu(B)+\frac{m^2}{2}\sigma_T^2\right\}.\notag
\end{align}
Since for all $x>0$, we have $x\cdot e^{\frac{x^2}{2}}\leq 2e^{x^2}$. Therefore, taking account \eqref{upB} and \eqref{sigmaT} it yields that, there is a constant $C_{T,H,m,d}$ which only depends on $T,m,d,H_i,\ i=0,1,\ldots,d$ such that:
\begin{equation*}
\begin{split}
\EE^W\Big[\exp\big( m\|V\|_T\big)\Big]\leq 4\sqrt{2\pi}\exp\left\{C_{T, H, m,d} (1+\sup_{u\in [0,T]} \sum_{i=1}^d|B_u^i|)^{\alpha } \right\}.
\end{split}
\end{equation*}
%
By the Fernique's theorem we obtain $\EE
\left[ \EE^W\big[\exp\big( m\|V\|_T\big)\big]\right] < \infty$,
which implies $\EE \big| \sup_{t\in[0,T]} Y_t \big|^p <\infty$. That is to say    $Y=\{Y_t, t\in[0,T]\}\in \mathcal{S}^p_{\mathbb{F} }(0,T;\RR)$ for all $p\in[1,q)$. The convergence of $Y^{\ep,\eta}$   to
   $Y=\{Y_t, t\in[0,T]\}\in \mathcal{S}^p_{\mathbb{F} }(0,T;\RR)$ for all $p\in [1, q)$
   is routine and a little bit more complicated. But the essential
    estimates are the  same as above.
\end{proof}

Now we want to study the second component of the solution pair of \eqref{e appro1}, i.e.   $Z^{\ep,\eta}=\{Z_s^{\ep,\eta},\,s\in[0,T]\}$ defined by
\eqref{so appro}. Introduce the space
\[\mathcal{M} ^2_{\mathbb{F} }(0,T;\RR^d):=\Big\{ \phi=(\phi_s)_{s\in[0,T]}:\RR^d\mbox{-valued}\ \mathbb{F}\mbox{-progressively measurable and  }
  \ \EE\big[\int_0^T |\phi_s|^2 ds\big]<\infty\Big\}.\]
\begin{thm}\label{z}
Denote
\[
\bar H=\max\{H_0,H_1, \cdots, H_d\}\,\quad \text{and} \quad \underbar
H =\min \{H_0, H_1, \cdots, H_d\}\,.
\]
Suppose $\sum_{i=1}^d (2H_i - \beta_i)<2$, terminal condition $\xi\in \mathbb{D}^{1,q}_B$ is measurable w.r.t. $\sigma$-field $\cF_T^B$, for $\displaystyle q>\frac{2}{2\underline{H} -1}$.
Then   $Z^{\ep,\eta}\in \mathcal{M} ^2_{\mathbb{F} }(0,T;\RR^d) $   and  $Z^{\ep,\eta}$ has a limit $Z=\{Z_s,\,s\in[0,T]\}$ in $\mathcal{M} ^2_{\mathbb{F} }(0,T;\RR^d)$.  This limit   can be written as
\begin{eqnarray}
  Z_t
  &=& D_t^BY_t  =D_t^B\EE \left[\xi   \exp\big(\int_t^T   {W}(dr,B_r)\big)\,\big|\,\mathcal{F}_t\right]\label{e.3.9} \\
&=&\EE \bigg[   e^{\int_t^T {W} (d\tau, B_\tau) } D_t^B\xi
  +\int_t^T e^{\int_t^s { {W}} (d\tau, B_\tau) }  Y_s  (\nabla_x  W)
       (ds, B_s)
  \big|\cF_t\bigg] \, .\label{e.3.10}
\end{eqnarray}
\end{thm}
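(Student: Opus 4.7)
The plan is to start from the explicit representation $Y_t^{\varepsilon,\eta}=\mathbb{E}[\xi\exp(V_t^{\varepsilon,\eta})\mid\cF_t]$ in \eqref{so appro}, where $V_t^{\varepsilon,\eta}:=\int_t^T\dot W_{\varepsilon,\eta}(s,B_s)\,ds$, to differentiate in the Malliavin sense so as to obtain a two-term expression for $Z_t^{\varepsilon,\eta}=D_t^B Y_t^{\varepsilon,\eta}$, and then to pass to the limit $\varepsilon,\eta\to 0$ in order to obtain the identities \eqref{e.3.9}--\eqref{e.3.10}. Convergence in $\mathcal M^2_{\mathbb F}(0,T;\mathbb R^d)$ will be driven by uniform exponential-integrability estimates combined with the integrability hypothesis $\xi\in\mathbb D^{1,q}_B$ for $q>2/(2\underline{H}-1)$.

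First I would apply the commutation relation $D_t^B\mathbb{E}[F\mid\cF_t]=\mathbb{E}[D_t^BF\mid\cF_t]$, which is legitimate because $\xi\exp(V_t^{\varepsilon,\eta})\in\mathbb D^{1,q}_B$ thanks to the smoothness of the mollified kernel in \eqref{nsp} and the regularity of $\xi$. The Leibniz rule gives $D_t^B[\xi\exp(V_t^{\varepsilon,\eta})]=\exp(V_t^{\varepsilon,\eta})D_t^B\xi+\xi\exp(V_t^{\varepsilon,\eta})D_t^B V_t^{\varepsilon,\eta}$, and since $D_t^B B_s=I_d$ for $s\ge t$ the chain rule produces $D_t^B V_t^{\varepsilon,\eta}=\int_t^T(\nabla_x\dot W_{\varepsilon,\eta})(s,B_s)\,ds$. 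A tower-property argument conditional on $\cF_s$ applied to the second summand, together with the factorisation $\xi\exp(V_t^{\varepsilon,\eta})=\exp(\int_t^s\dot W_{\varepsilon,\eta}(r,B_r)\,dr)\cdot\xi\exp(V_s^{\varepsilon,\eta})$ so as to recognise $Y_s^{\varepsilon,\eta}$, produces
\begin{equation*}
Z_t^{\varepsilon,\eta}=\mathbb{E}\Big[\exp(V_t^{\varepsilon,\eta})D_t^B\xi+\int_t^T e^{\int_t^s\dot W_{\varepsilon,\eta}(r,B_r)\,dr}Y_s^{\varepsilon,\eta}(\nabla_x\dot W_{\varepsilon,\eta})(s,B_s)\,ds\,\Big|\,\cF_t\Big].
\end{equation*}

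To pass to the limit I would show that each term converges in $L^2(\Omega)$ uniformly in $t$. The first summand is handled by Proposition \ref{3.1} (giving $V_t^{\varepsilon,\eta}\to V_t$ in probability), Proposition \ref{exp int V} (uniform exponential moments of $\exp(\lambda V_t^{\varepsilon,\eta})$) and H\"older's inequality with the $L^q$-norm of $D_t^B\xi$; this yields the limit $\mathbb{E}[e^{\int_t^T W(d\tau,B_\tau)}D_t^B\xi\mid\cF_t]$. For the second summand the stochastic integral $\int_t^T e^{\int_t^s W(d\tau,B_\tau)}Y_s(\nabla_x W)(ds,B_s)$ must first be \emph{defined} as the $L^2$-limit of its mollifications, mimicking the proof of Proposition \ref{3.1}: the conditional variance given $\cF^B$ involves the spatial kernel $\partial_{x_i}\partial_{y_i}R_{H_i}(B_s^i,B_r^i)\sim|B_s^i-B_r^i|^{2H_i-2}$ together with the time kernel $|s-r|^{2H_0-2}$, and combining with Theorem \ref{sup Y} for $\sup_t|Y_t|$ and the power decay of $\rho$, Fernique's theorem furnishes the required $\mathcal M^2$-bound.

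The main obstacle is the extra singularity created by $\nabla_x$: the diagonal exponent $2H_i-2\in(-1,0)$ makes the spatial covariance only marginally integrable, and coupled with the temporal $|s-r|^{2H_0-2}$ singularity this is exactly what forces the requirement $q>2/(2\underline{H}-1)$ on $\xi$, so that after a H\"older split the $L^{q/(q-2)}$-norm of the remaining Gaussian integral is absorbed by the Borell--TIS (Lemma \ref{Borell}) and majorizing-measure (Lemma \ref{majorizing}) estimates already developed for Theorem \ref{sup Y}. Once $L^2(\Omega)$-convergence for each fixed $t$ is established, dominated convergence in $t$ with the $\upnu(B)$-type envelope from \eqref{upB} gives $Z^{\varepsilon,\eta}\to Z$ in $\mathcal M^2_{\mathbb F}(0,T;\mathbb R^d)$, and \eqref{e.3.9} then records the stability of the Malliavin derivative under this $L^2$-closure.
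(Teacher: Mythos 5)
Your high-level strategy coincides with the paper's: mollify the noise, obtain the two-term representation for $Z_t^{\varepsilon,\eta}$, prove uniform $L^2$ bounds, and pass to the limit. Your derivation of the representation (Malliavin-differentiating the conditional expectation \eqref{so appro} via commutation, Leibniz and chain rules) is a legitimate variant of the paper's route, which instead applies $D_r^B$ to the approximated BSDE \eqref{e appro1} and solves the resulting linear BSDE explicitly; the paper itself remarks that both are possible, and the treatment of the term containing $D_t^B\xi$ is the same in both.

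There is, however, a genuine gap in your treatment of the singular term $\int_t^T e^{\int_t^s \dot W_{\varepsilon,\eta}(\tau,B_\tau)d\tau}\,Y_s^{\varepsilon,\eta}\,\nabla_x\dot W_{\varepsilon,\eta}(s,B_s)\,ds$, which is where all the work lies. You propose to control it by "the conditional variance given $\cF^B$" after extracting $\sup_t|Y_t|$ and the exponential via Theorem \ref{sup Y} and Fernique. But the integrand $e^{\int_t^s \dot W}\,Y_s^{\varepsilon,\eta}$ is itself a functional of $W$, so conditionally on $\cF^B$ the integral is not a Gaussian random variable and has no "conditional variance" given by the kernel $|s-r|^{2H_0-2}\partial_{x_i}\partial_{y_i}q$; and one cannot first pull the $W$-dependent factor out in supremum norm and then compute the variance of what remains, because the leftover object $\int_t^T\nabla_x\dot W_{\varepsilon,\eta}(s,B_s)\,ds$ would have to be controlled pointwise in $s$ (a Cauchy--Schwarz in $s$ hits the diagonal singularity $|B_s^i-B_s^i|^{2H_i-2}$, which blows up as $\varepsilon,\eta\to0$). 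The paper resolves this with two devices you do not mention and which are the core of the proof: (i) two independent copies $B^1,B^2$ of the Brownian motion to linearize the squared conditional expectation as in \eqref{eqn.z_compute}, and (ii) the exact Gaussian identity of Lemma \ref{x1x2y} for $\EE[X_1X_2\exp(Y)]$, which converts $\EE^W$ of the product of the two Gaussian derivatives against the exponential into explicit kernels $A_{1,i},A_{2,i},A_{3,i},A_4$ whose off-diagonal singularities $|s_1-s_2|^{2H_0-2}|B^1_{s_1}-B^2_{s_2}|^{2H_i-2}$ are then shown to be integrable uniformly in $\varepsilon,\eta$ (this is where $q>2/(2\underline{H}-1)$ and the decay of $\rho$ enter). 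Without these steps the claimed $\mathcal{M}^2_{\mathbb{F}}$ bound, the Cauchy property of $Z^{\varepsilon,\eta}$, and the identification of the limit \eqref{e.3.10} are not established.
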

\begin{proof}   Presumably we may apply $D_r^B$ to $Y_t^{\varepsilon,\eta}$ given by \eqref{so appro}.  But it is inconvenient to  deal with the Malliavin derivative of the conditional expectation. We find that it is more convenient to find $D_r^B Y_t^{\varepsilon,\eta}$ by working on
\eqref{e appro1} directly.  In fact applying $D_r^B$ to \eqref{e appro1}   yields
\begin{eqnarray*}
  D_r ^B Y_t^{\varepsilon,\eta}
  &=& D_r^B \xi + \int_t^T {\dot{W}}_{\varepsilon,\eta}(s,B_s)   D_r ^B Y_s^{\varepsilon,\eta} ds \\
  &&\qquad  +\int_t^T Y_s^{\varepsilon, \eta} \nabla_x {\dot{W}}_{\varepsilon, \eta}(s, B_s) I_{[0, s]}(r) ds
  -\int_t^T D_r^B Z_s^{\varepsilon,\eta} dB_s.
\end{eqnarray*}
Denote  $\tilde Y_t=D_r^BY_t^{\varepsilon, \eta},\ \tilde Z_t=D_r^B Z_t^{\varepsilon, \eta}$ (we fix $r$)   and  we can rewrite the above   equation
as
\[
\begin{cases}
  d\tilde Y_t=-
  {\dot{W}}_{\varepsilon, \eta}(t, B_t) \tilde Y_t dt-Y_t^{\varepsilon, \eta} \nabla_x {\dot{W}}_{\varepsilon, \eta}(t, B_t) I_{[0, t]}(r)dt
  +\tilde Z_t dB_t,\  \ r\leq t\le T\\
  \tilde Y_T=D_r^B\xi\,.
  \end{cases}
\]
This is another linear backward stochastic differential equation, whose solution   has the following  explicit form.
\begin{eqnarray*}
  D_r^BY_t^{\varepsilon, \eta}
  &=&\EE \bigg[   e^{\int_t^T {\dot{W}}_{\varepsilon, \eta}(\tau, B_\tau)
      d \tau} D_r^B\xi \\
  &&\qquad
  +\int_r^T e^{\int_t^s {\dot{W}}_{\varepsilon, \eta}(\tau, B_\tau)
      d\tau} Y_s^{\varepsilon, \eta} \nabla_x {\dot{W}}_{\varepsilon, \eta}(s, B_s)  ds
  \big|\cF_t\bigg]\ ,\quad t\ge r.
\end{eqnarray*}
By   \cite[Equation (2.11)]{HNS2011} and  \cite[Equation (2.23)]{HNS2011}
  we have
\begin{eqnarray*}
  Z_t^{\varepsilon, \eta}
  &=& D_t^BY_t^{\varepsilon, \eta}
  =\EE \bigg[   e^{\int_t^T {\dot{W}}_{\varepsilon, \eta}(\tau, B_\tau)
      d \tau} D_t^B\xi \\
  &&\qquad
  +\int_t^T e^{\int_t^s {\dot{W}}_{\varepsilon, \eta}(\tau, B_\tau)
      d\tau} Y_s^{\varepsilon, \eta} \nabla_x {\dot{W}}_{\varepsilon, \eta}(s, B_s)  ds
  \big|\cF_t\bigg] \,\\
  &:=& Z_t^{0, \varepsilon, \eta}+Z_t^{1, \varepsilon, \eta}.
\end{eqnarray*}
Assuming $D_r^B\xi$ is nice, we   $Z_t^{0, \varepsilon, \eta}$ can be treated in exactly the same way as $Y_s^{\varepsilon, \eta}$.

We shall focus our effort  on showing  $Z_t^{1, \varepsilon, \eta}\in \mathcal{M} ^2_{\mathbb{F} }(0,T;\RR^d)$.
 Substituting $Y_t
^{\varepsilon, \eta}$  given by
\eqref{so appro}  into the above expression, we have
\begin{eqnarray*}
  Z_t^{1, \varepsilon, \eta}
  &=& \EE  \bigg[  \int_t^T e^{\int_t^s {\dot{W}}_{\varepsilon, \eta}(\tau, B_\tau)
      d\tau} \EE \Big[\xi \exp\big(\int_s^T {\dot{W}}_{\varepsilon,\eta}(u,B_u)du\big)\,\big|\,\mathcal{F}_s \Big] \nabla_x {\dot{W}}_{\varepsilon, \eta}(s, B_s)  ds
  \big|\cF_t \bigg] \\
  &=&     \int_t^T   \EE \Big[\xi\, \nabla_x {\dot{W}}_{\varepsilon, \eta}(s, B_s)
   \exp\big(\int_t^T {\dot{W}}_{\varepsilon,\eta}(u,B_u)du\big)\,\big|\,\mathcal{F}_t \Big]    ds \,.
\end{eqnarray*}
Since it involves  the term  $\nabla_x {\dot{W}}_{\varepsilon, \eta}(s, B_s)$, this term is much more difficult to deal with.
We shall fully explore the normality of the Gaussian
field $W$.  Moreover,  there is a conditional expectation in the expression of $Z_t^{1, \varepsilon, \eta}$ which  seems to stop us carrying out any meaningful computations.  We shall get around this difficulty by   introducing  two independent standard Brownian motions $B^1, B^2 $
which are identical copies of the
Brownian motion $B$.   Denote $\mathcal{F}_t^{B^1,B^2} = \sigma\{B_s^1,B_r^2,\ 0\leq s,r\leq t;W(t,x),\ t\geq0,x\in\RR^d\}$ by the $\sigma$-algebra generated
 by sBm $B^1,B^2$ up to time instant $t$ and $W(t,x)$ for all $t\geq0$ and $x\in\RR^d$. \\
Note that, $\EE^W$  only denotes the expectation with respect to $W$, which consider other random variables as  "fixed constant".
Then, we have
\begin{eqnarray}
  \EE^W \left[ Z_t^{1, \varepsilon, \eta}\right]^2
  &=&  \EE^W \int_t^T\int_t^T    \EE \Big[\xi(B^1)\xi(B^2)\big(\nabla_x {\dot{W}}_{\varepsilon, \eta}({s_1}, B_{s_1}^1)\big)^T
  \nabla_x {\dot{W}}_{\varepsilon, \eta}({s_2}, B_{s_2}^2)\nonumber  \\
  &&\qquad \exp\big(\int_t^T \left[
  \dot{W}^H_{\varepsilon,\eta}(u,B_u ^1 )
  +\dot{W}^H_{\varepsilon,\eta}(u,B_u^2  )\right] du\big)\big|\,\mathcal{F}_t^{B^1,B^2}\Big] \Big|_{B^1 = B^2=B}   ds_1ds_2\nonumber \\
  &=&   \int_t^T\int_t^T    \EE  \Big[\xi(B^1)\xi(B^2)   I^{\vare, \eta}(s_1, s_2) \,\big|\,\mathcal{F}_t^{B^1,B^2}\Big] \Big|_{B^1 = B^2=B}   ds_1ds_2\, ,\label{eqn.z_compute}
\end{eqnarray}
where  $I^{\vare, \eta}(s_1, s_2)$   is defined by
\begin{equation}\label{beforeA_1}
  \begin{split}
   I^{\vare, \eta}(s_1, s_2 )  &:=
   \sum_{i=1}^d \EE^W\bigg\{ \nabla_{x_i} {\dot{W}}_{\varepsilon, \eta}({s_1}, B_{s_1}^1)
  \nabla_{x_i} {\dot{W}}_{\varepsilon, \eta}({s_2}, B_{s_2}^2)  \exp\left(\int_t^T \left[
    {\dot{W}}_{\varepsilon,\eta}(u,B_u ^1 )
  +{\dot{W}}_{\varepsilon,\eta}(u,B_u^2  )\right] du
  \right) \bigg\}\,.
  \end{split}
\end{equation}
Denote
\[
\begin{cases}
Z_{1,i}^{\vare, \eta}=\nabla_{x_i} {\dot{W}}_{\varepsilon, \eta}({s_1}, B_{s_1}^1)\,; \quad
Z_{2,i}^{\vare, \eta}=\nabla_{x_i} {\dot{W}}_{\varepsilon, \eta}({s_2}, B_{s_2}^2)\,;  \\
Y^{\vare, \eta}=\displaystyle\int_t^T \left[
    {\dot{W}}_{\varepsilon,\eta}(u,B_u ^1 )
  +{\dot{W}}_{\varepsilon,\eta}(u,B_u^2  )\right] du\,.
\end{cases}
\]
Then
\begin{equation*}
  \begin{split}
   I^{\vare, \eta}(s_1, s_2 )  =\sum_{i=1}^d \EE^W\bigg\{Z_{1,i}^{\vare, \eta} Z_{2,i}^{\vare, \eta} \exp{(Y^{\vare, \eta})}\bigg\}\,.
  \end{split}
\end{equation*}
As  random variables of $W$ (namely for fixed $B^1, B^2$), $Z_{1,i}^{\vare, \eta}, Z_{2,i}^{\vare, \eta}, Y^{\vare, \eta}$ are jointly   Gaussians, we shall use the following lemma to compute the above expectations.
\begin{lem}\label{x1x2y}
  Assume that $X_1, X_2, Y $ are  jointly  mean zero  Gaussians. Then
\begin{equation}
\EE \left[ X_1X_2\exp(Y )\right] = \left(\EE(X_1Y )+\EE(X_2Y )+\EE(X_1X_2)\right)\exp  \Big[  \frac12 \EE(Y ^2)  \Big]\,. \label{e.3.5}
  \end{equation}
 \end{lem}
 \begin{proof} For any constants $s,  t\in \RR$ we have
   \begin{eqnarray*}
  &&   \EE \exp(Y +sX_1+tX_2)
      = \exp \left\{\frac12 \EE(Y +sX_1+tX_2)^2\right\}\\
     & &
     \qquad \quad =\exp  \Bigg[ \bigg\{ \EE(Y ^2)
       +s^2\EE(X_1^2)+t^2\EE(X_2^2)+2s\EE(X_1Y )  +2t\EE(X_2Y )
       + 2st\EE(X_1X_2) \bigg\}\Big/2\Bigg]\,.
   \end{eqnarray*}
   Thus
   \begin{eqnarray*}
     \EE \left[X_1X_2\exp(Y )\right]
     &=&\frac{\partial ^2}{\partial s\partial t}\big|_{s=t=0} \exp \left\{\frac12 \EE(Y +sX_1+tX_2)^2\right\}\\
     &=& \left(\EE(X_1Y )+\EE(X_2Y )+\EE(X_1X_2)\right)
     \exp  \Bigg[  \frac12 \EE(Y ^2)
       \Bigg]  \,.
   \end{eqnarray*}
This is \eqref{e.3.5}.
 \end{proof}

Applying   the above  Lemma \ref{x1x2y}  to evaluate
$I^{\vare, \eta}(s_1, s_2)$  yields
\begin{align*}
  \EE^W \left[ Z_t^{1, \varepsilon, \eta}\right]^2
  =& \int_t^T\int_t^T    \EE \bigg[\xi(B^1)\xi(B^2)\left(\sum_{i=1}^d \left(A_{1,i}^{\vare, \eta}  +
  A_{2,i}^{\vare, \eta}+ A_{3,i}^{\vare, \eta}\right)\right)\\
  &\qquad\qquad\qquad \exp\left(\frac{A_{4}^{\vare, \eta}}{2}\right)\,\big|\,\mathcal{F}_t^{B^1,B^2}\bigg]\bigg|_{B^1=B^2=B}      ds_1ds_2\\
  =&\sum_{j=1}^3 \sum_{i=1}^d I_{j,i, t}^{\vare, \eta}\,,
\end{align*}
where
\begin{equation}
\left\{\begin{split}
A_{1,i}^{\vare, \eta}
:=&\EE^W (Z_{1,i}^{\vare, \eta}Z_{2,i}^{\vare, \eta}), \quad A_{2,i}^{\vare, \eta}
:=\EE^W (Z_{1,i}^{\vare, \eta}Y^{\vare, \eta}),
\\
 A_{3,i}^{\vare, \eta}
:=&\EE^W (Z_{2,i}^{\vare, \eta}Y^{\vare, \eta}),
\quad   A_{4}^{\vare, \eta}
:=\EE^W (\left(Y^{\vare, \eta}\right)^2)
\end{split} \right.
\end{equation}
and
\begin{equation}
I_{j,i, t}^{\vare, \eta}
:=\int_t^T\int_t^T    \EE  \bigg[\xi(B^1)\xi(B^2) A_{j,i}^{\vare, \eta}    \exp\left(\frac{A_{4}^{\vare, \eta}}{2}\right)\,\big|\,\mathcal{F}_t^{B^1,B^2}\bigg]\bigg|_{B^1=B^2=B}      ds_1ds_2\,.
\end{equation}
Let us consider $I_{1,i,t}^{\vare, \eta}$ in details.  The other terms can be treated in similar way. First, let us compute
\begin{equation}\label{3.10star}
  \begin{split}
A_{1,i}^{\vare, \eta} =& \EE^W \bigg[  \nabla_{x_i}{\dot{W}}_{\varepsilon, \eta}(s_1, B_{s_1}^1)   \nabla_{x_i} {\dot{W}}_{\varepsilon, \eta}(s_2, B_{s_2}^2)\bigg]  \\
  =& \alpha_{H_0}  \int_0^{s_1}\int_0^{s_2}   \varphi_\eta (s_1-r_1)    \varphi_\eta (s_2-r_2)
  |r_2-r_1|^{2H_0 -2}dr_1dr_2\\
  &\qquad\qquad\qquad \int_{\RR^{2d}}\nabla_{x_i} p_\varepsilon  (B_{s_1}^1  -w)  \nabla_{x_i}p_\varepsilon  (B_{s_2}^2  -z)\rho(w)\rho(z)\prod_{i = 1}^{d} R_{H_i}(w_i,z_i)dwdz\\
  =& J_1^{  \eta}(s_1, s_2)
 J_2^{\vare }(s_1, s_2)    \,,
  \end{split}
\end{equation}
where $ J_1^{  \eta}(s_1, s_2) $  and  $J_2^{\vare }(s_1, s_2)$ are   defined   as follows.
\begin{equation*}
\left\{\begin{split}
 J_1^{\eta}(s_1, s_2)
  &:=
 \int_0^{s_1}\int_0^{s_2}     \varphi_\eta (s_1-r_1)    \varphi_\eta (s_2-r_2)
  |r_2-r_1|^{2H_0 -2}dr_1dr_2  \,, \\
  J_2^{  \vare }(s_1, s_2)
  &:=\int_{\RR^{2d}} \nabla_{w_i}p_\varepsilon (B_{s_1}^{1}  -w)\nabla_{z_i}p_\varepsilon (B_{s_2}^{2}  -z)\rho(w)\rho(z)\prod_{i = 1}^{d}R_{H_i}(w_i,z_i)dwdz \\
  &=\int_{\RR^{2d}} \nabla_{w_i}p_\varepsilon (B_{s_1}^{1}  -w)\nabla_{z_i}p_\varepsilon (B_{s_2}^{2}  -z)q(w, z)dwdz\,,
  \end{split}  \right.
  \end{equation*}
  where we recall that $q(x,y)$ is the
  spatial covariance of noise given by
   \eqref{e.1.7}.
Notice that $J_2^{\vare}(s_1, s_2)$ is independent of $\vare$.
It is  elementary to see that
\begin{equation}\label{jeta1}
J_1^\eta (s_1, s_2):=
 \int_0^{s_1}\int_0^{s_2}     \varphi_\eta (s_1-r_1)    \varphi_\eta (s_2-r_2)
  |r_2-r_1|^{2H_0 -2}dr_1dr_2 \to
  |s_2-s_1|^{2H_0 -2} \quad \hbox{ as $\vare, \eta\to 0$} \,.
\end{equation}
Moreover, for any $p<1/(2-2H_0)$ and $1/p+1/q=1$, by H\"older's inequality we have
\begin{equation*}
\begin{split}
|J_1^\eta(s_1, s_2)| ^p
\le & \int_0^{s_1}\int_0^{s_2}
  |r_2-r_1|^{(2H_0 -2)p }dr_1dr_2   \nonumber\\
&\qquad\qquad  \times \left(\int_0^{s_1}\int_0^{s_2}     \varphi_\eta (s_1-r_1)    \varphi_\eta (s_2-r_2)
   dr_1dr_2 \right)^{p/q}\,.
   \end{split}
\end{equation*}
The above second factor is less than or equal to $1$.
Making substitutions $ s_1-r_1\to r_1'\eta $ and
$ s_2-r_2\to r_2'\eta $ we  have
\begin{equation}\label{e.3.15}
\begin{split}
\sup_{\eta\in (0, 1]} |J_1^\eta  (s_1, s_2)| ^p
\le &\sup_{\eta\in (0, 1]} \int_0^{1 }\int_0^{1 }
  |s_2-s_1+\eta (r_1'-r_2')|^{(2H_0 -2)p }dr_1'dr_2'<\infty\,.
   \end{split}
\end{equation}
Now we consider $J_2^\vare$.
Integration by parts yields
\begin{align*}
  J_2^{\vare}(s_1, s_2)
  &= \int_{\RR^{2d}} p_\varepsilon (B_{s_1}^{1}  -w) p_\varepsilon (B_{s_2}^{2}  -z)\nabla_{w_i} \nabla_{z_i} q(w,z) dwdz\\
  &= \int_{\RR^{2d}} p_\varepsilon (B_{s_1}^{1}  -w) p_\varepsilon (B_{s_2}^{2}  -z)\prod_{j\neq i}^d R_{H_j} (w_j,z_j)\Big[ \nabla_{w_i}\rho(w)\nabla_{z_i} \rho(z)
  R_{H_i}(w_i,z_i)\\
  &\qquad + \nabla_{w_i}\rho(w)\rho({z})\Big(
   H_i|z_i|^{2H_i-1} \sign(z_i)-H_i|w_i-z_i|^{2H_i-1}\sign(w_i-z_i)  \Big)  \\
  &\qquad +    \rho(w) \nabla_{z_i}\rho(z) \Big(
   H_i|w_i|^{2H_i-1} \sign(z_i)-H_i|w_i-z_i|^{2H_i-1}\sign(w_i-z_i)  \Big)  \\
  &\qquad  + \rho(z)
   \rho(w)\alpha_{H_i}|w_i-z_i|^{2H_i-2}\Big]dw_idz_i\\
  &=J_{21}^{\vare}(s_1, s_2)+J_{22}^{\vare }(s_1, s_2)\,,
\end{align*}
where
\begin{align}\label{jep21}
 J_{21}^{\vare}(s_1, s_2)
:&=  \EE^{X,X'} \bigg[\prod_{j\neq i}^d R_{H_j}(B_{s_1}^{1,j}+\varepsilon  X_j,B_{s_2}^{2,j}+\varepsilon  X_j')\notag\\
  &\quad \times\bigg(\nabla_{x_i}\rho(B_{s_1}^{1}+\varepsilon  X ) \nabla_{x_i}\rho(B_{s_2}^{2}+\varepsilon  X' )
  R_{H_i}(B_{s_1}^{1,i}+\varepsilon  X_i,B_{s_2}^{2,i}+\varepsilon  X_i')  \notag \\
  &\qquad + \nabla_{x_i}\rho(B_{s_1}^{1}+\varepsilon  X )\rho({B_{s_2}^{2}+\varepsilon  X'})\Big(
   H_i|B_{s_2}^{2,i}+\varepsilon  X_i'|^{2H_i-1} \sign(B_{s_2}^{2,i}+\varepsilon  X_i')\\
   &\qquad\qquad -H_i|B_{s_1}^{1,i}+\varepsilon  X_i -B_{s_2}^{2,i}-\varepsilon  X_i'|^{2H_i-1}\sign(B_{s_1}^{1,i}+\varepsilon  X_i-B_{s_2}^{2,i}-\varepsilon  X_i')  \Big) \notag \\
  &\qquad +    \rho(B_{s_1}^{1}+\varepsilon  X) \nabla_{x_i}\rho'(B_{s_2}^{2}+\varepsilon  X') \Big(
   H_i|B_{s_1}^{1,i}+\varepsilon  X_i |^{2H_i-1} \sign(B_{s_2}^{2,i}+\varepsilon  X_i')\notag\\
   &\qquad\qquad-H_i|B_{s_1}^{1,i}+\varepsilon  X_i-B_{s_2}^{2,i}-\varepsilon  X_i'|^{2H_i-1}\sign(B_{s_1}^{1,i}+\varepsilon  X_i -B_{s_2}^{2,i}-\varepsilon  X_i')  \Big)\bigg) \bigg]\notag
\end{align}
and
\begin{align*}
 J_{22}^{\vare}(s_1, s_2)
  &:=  \alpha_{H_i}\EE^{X,X'} \bigg[\prod_{j\neq i}^d R_{H_j}(B_{s_1}^{1,j}+\varepsilon  X_j,B_{s_2}^{2,j}+\varepsilon  X_j')\\
  &\qquad\times\rho(B_{s_1}^{1}+\varepsilon  X)\rho(B_{s_2}^{2}+\varepsilon  X')|B_{s_1}^{1,i}+\varepsilon  X_i-B_{s_2}^{2,i}-\varepsilon  X_i'|^{2H_i-2}\bigg]
  \end{align*}
with  $X=(X_1, \cdots, X_d),X'=(X_1', \cdots, X_d')$ being   independent  standard Gaussian random variables,
which are also independent of $B^1, B^2$.
From the definition, we can consider $J_{21}^{\vare}(s_1, s_2)$ as a random variable of $B^1$ and $B^2$.
From the above expression it is easy to see that
\begin{equation}\label{e.3.16}
\sup_{\vare\in (0, 1]}\left|J_{21}^{\vare}(s_1, s_2)\right|
\le   C\left(1+|B_{s_1}^1|^m+| B_{s_2}^2 |^m\right)\,,
\end{equation}
for some positive constants $C$ and $m$.

As concerns for $J_{22}^\vare(s_1, s_2)$, we can find two constants  $p,q$ satisfying $p<1/(2-2H_0)$ and $1/p+1/q=1$ such that by H\"older's inequality,
\begin{align*}
 J_{22}^{\vare}(s_1, s_2)
  &\le   \alpha_{H_i}\left\{\EE^{X,X'} \left[\prod_{j\neq i}^d \left(R_{H_j}(B_{s_1}^{1,j}+\varepsilon  X_j,B_{s_2}^{2,j}+\varepsilon  X_j')\right)^q\rho^q(B_{s_1}^{1}+\varepsilon  X)\rho^q(B_{s_2}^{2}+\varepsilon  X')  \right]  \right\}^{1/q} \\
&\qquad\qquad \times   \left\{\EE^{X,X'} \left[ |B_{s_1}^{1,i}+\varepsilon  X_i-B_{s_2}^{2,i}-\varepsilon  X_i'|^{(2H_i-2)p}\Big)\right]  \right\}^{1/p} \,.
  \end{align*}
By the Lemma A.1 of \cite{HNS2011}
the above second expectation  is bounded by
$  \left|B_{s_1}^{1,i} -B_{s_2}^{2,i} \right|^{(2H_i-2)p} $. Thus, by the assumption on $\rho$ and by the definition of $R_H$ we have
\begin{equation}\label{e.3.17}
\sup_{\vare\in (0, 1]}\left|J_{22}^{\vare}(s_1, s_2)\right|
\le   C\left(1+|B_{s_1}^1|^m+| B_{s_2}^2 |^m\right)\cdot\left|B_{s_1}^{1,i} -B_{s_2}^{2,i} \right|^{(2H_i-2)p}\,.
\end{equation}
Moreover, from \eqref{3.10star}, \eqref{jeta1} and \eqref{jep21} we have
\begin{align*}
   \lim_{\eta,\varepsilon\rightarrow 0} A_{1,i} ^{\vare, \eta}  =& \lim_{\eta,\varepsilon\rightarrow 0}  \EE^W\bigg[\nabla_{x} {\dot{ W}}_{\varepsilon, \eta}(s_1, B_{s_1}^1)
    \nabla_ {x} {\dot{ W}}_{\varepsilon, \eta}(s_2, B_{s_2}^2)\bigg]\\
    = & \alpha_{H_0} |s_2-s_1|^{2H_0 -2}\prod_{j\neq i}^d R_{H_j} (B_{s_1}^{1,j},B_{s_2}^{2,j})
     \Big[\nabla_{x_i}\rho(B_{s_1}^{1} )\nabla_{x_i} \rho(B_{s_2}^{2})
  R_{H_i} (B_{s_1}^{1,i},B_{s_2}^{2,i})\\
  &\qquad + \nabla_{x_i}\rho(B_{s_1}^{1}  )\rho( B_{s_2}^{2} )\Big(
   2H_i|B_{s_2}^{2,i} |^{2H_i-1} \sign(B_{s_2}^{2,i} )  -2H_i|B_{s_1}^{1,i} -B_{s_2}^{2,i} |^{2H_i-1}\sign(B_{s_1}^{1,i}-B_{s_2}^{2,i} )  \Big)  \\
  &\qquad +    \rho(B_{s_1}^{1} ) \nabla_{x_i}\rho(B_{s_2}^{2} ) \Big(
   2H_i|B_{s_1}^{1,i} |^{2H_i-1} \sign(B_{s_2}^{2,i} ) +2H_i|B_{s_1}^{1,i} -B_{s_2}^{2,i} |^{2H_i-1}\sign(B_{s_1}^{1,i} -B_{s_2}^{2,i})  \Big)  \\
  &\qquad  + \alpha_{H_i}\rho(B_{s_1}^{1})
   \rho(B_{s_2}^{2} )|B_{s_1}^{1,i} -B_{s_2}^{2,i} |^{2H_i-2}\Big]\,.
\end{align*}
Using the spatial covariance $q(x,y)$, we can   write
\begin{equation}
   \lim_{\eta,\varepsilon\rightarrow 0}A_{1,i}^{\vare, \eta}  =\alpha_{H_0} |s_2-s_1|^{2H_0 -2} \frac{\partial^2}{\partial x_i
   \partial  y_i}q(x,y) \Big|_{x=B^1_{s_1}, y=B^2_{s_2}}\,.
\end{equation}
%
Analogously to \eqref{3.10star}, \eqref{e.3.15}, \eqref{e.3.16} and \eqref{e.3.17},  we can show the boundedness of other $A_{ji}^{\vare, \eta}$'s uniformly w.r.t. $\ep,\eta$.  So  we can apply the dominated convergence theorem below. In particular, we have
\begin{align*}
   \lim_{\eta,\varepsilon\rightarrow 0}
   A_{2,i}^{\vare, \eta}
 =\alpha_{H_0} \int_t^T |{s_1}-u|^{2H_0-2}
 \left[ \frac{\partial }{\partial x_i} q(x, y)\Big|_{x=B_{s_1}^1, y=B_u^2}+\frac{\partial }{\partial x_i} q(x, y)\Big|_{x=B_{s_1}^1, y=B_u^1}  \right]  du\,.
\end{align*}
\begin{align*}
   \lim_{\eta,\varepsilon\rightarrow 0}
   A_{3,i}^{\vare, \eta}
 =\alpha_{H_0} \int_t^T |{s_2}-u|^{2H_0-2}
 \left[ \frac{\partial }{\partial x_i} q(x, y)\Big|_{x=B_{s_1}^2, y=B_u^2}+\frac{\partial }{\partial x_i} q(x, y)\Big|_{x=B_{s_1}^2, y=B_u^1}  \right]  du\,.
\end{align*}
%
As for $A_4^{\vare, \eta}$, we have by definition of $Y^{\vare, \eta}$
\begin{eqnarray*}
A_4^{\vare, \eta}
&=& \EE^W\left[\int_t^T\int_t^T\Big({\dot{W}}_{\varepsilon,\eta}(u,B_u ^1 )+{\dot{W}}_{\varepsilon,\eta}(u,B_u^2  )\Big)
  \Big(\dot{W}_{\varepsilon,\eta}(v,B_v ^1 )+\dot{W}_{\varepsilon,\eta}(v,B_v^2  )\Big)dudv\right]\\
  &=& \sum_{i=1}^3 A_{4, i}^{\vare, \eta}\,,
\end{eqnarray*}
where
\begin{equation*}
\begin{split}
A_{41}^{\vare, \eta} :=&
\EE^W\left[\int_t^T\int_t^T {\dot{W}}_{\varepsilon,\eta}(u,B_u ^1 ) \dot{W}_{\varepsilon,\eta}(v,B_v ^1 )  dudv\right]\,, \\
  A_{42}^{\vare, \eta} := &2
\EE^W\left[\int_t^T\int_t^T {\dot{W}}_{\varepsilon,\eta}(u,B_u ^2 ) \dot{W}_{\varepsilon,\eta}(v,B_v ^1 )  dudv\right]\,,\\
A_{43}^{\vare, \eta} := &
\EE^W\left[\int_t^T\int_t^T {\dot{W}}_{\varepsilon,\eta}(u,B_u ^2 ) \dot{W}_{\varepsilon,\eta}(v,B_v ^2 )  dudv\right]\,.
\end{split}
\end{equation*}
Similar to the proof of Proposition \ref{3.1} we can show that $A_{4,i}^{\vare, \eta}$, $i=1,2,3$ can be  bounded by a bound analogous to \eqref{Eep}. Thus, we have
\begin{align}\label{A4limit}
  \lim_{\eta,\varepsilon\rightarrow 0}  A_4^{\vare, \eta}  =& \int_t^T\int_t^T \alpha_{H_0} |u-v|^{2H_0-2}
  \left[ q(B^1_u, B^1_v)+2 q(B^1_u, B^2_v)
  +q(B^2_u, B^2_v)\right] dudv  \,.
\end{align}
Combining  the above  with  \eqref{e.3.15}-\eqref{A4limit} enables us to apply the dominated convergence theorem to obtain
\begin{equation}
\begin{split}
\lim_{\vare, \eta\to 0} I_{1, i, t}^{\vare, \eta}
=& \al_{H_0} \int_t^T \int_t^T \EE
\big [ \xi(B^1) \xi(B^2) |s_2-s_1|^{2H_0-2}
 (\partial_{i,i}
q)( B_{s_1}^1,B^2_{s_2})\\
&\qquad\qquad   \Upupsilon(t, T, B^1, B^2)
  \big|  \cF_t^{B^1, B^2} \big ]\big |_{B^1=B^2=B} ds_1ds_2\,,
\end{split}
\end{equation}
where $\partial _{i,i}q(x,y)=\frac{\partial^2}{
\partial x_i\partial y_i} q(x,y)$ and
\begin{equation}
\Upsilon:=\exp\bigg\{ \int_t^T\int_t^T \alpha_{H_0} |u-v|^{2H_0-2}
  \big[ q(B^1_u, B^1_v)+2 q(B^1_u, B^2_v)
  +q(B^2_u, B^2_v)\big] dudv \bigg\} \,.
\end{equation}
In a similar way we can show the existence of the limits of $ I_{2, i, t}^{\vare, \eta}$,
 $I_{3, i, t}^{\vare, \eta}$, and we can further identity these limits.

Thus,  we can easily deduce that
$\displaystyle \EE  \int_0^T \big|Z_t^{\varepsilon, \eta}\big|^2 dt $ exists. In order to take the limit, it would be sufficient to show that, along a subsequence, $Z^{\ep,\eta}$ converges to some $Z\in \mathcal{M} ^2_{\mathbb{F} }(0,T;\RR^d)$. But this is guaranteed by the fact that $\displaystyle \EE  \int_0^T \big|Z_t^{\varepsilon, \eta}\big|^2 dt $ is bounded w.r.t. $\ep,\eta >0$. Indeed, as before we can also show that $Z^{ \varepsilon, \eta}$ is a Cauchy sequence in
$\mathcal{M} ^2_{\mathbb{F} }(0,T;\RR^d)$, whose   limit  is  denoted  by $Z=\{ Z_t,\,t\in[0,T]\}$. We can also  write $Z$ as \eqref{e.3.9} and \eqref{e.3.10} (whose justification is given through our above approximation).
\end{proof}

After we have found the limit  $Y$ (Theorem \ref{sup Y})
and the limit $Z$ (Theorem \ref{z}), we want to show that they are the solution to \eqref{e1}.   To this end we  shall   take limit in equation
 \eqref{e appro1}. Since we have shown the convergence of $Y_t^{\vare, \eta}$ and $Z_t^{\vare, \eta}$ as in Theorems \ref{sup Y} and Theorems \ref{z},  we only need to discuss the limit of $\int_t^T Y_s^{\vare, \eta} \dot W_{\vare, \eta}(s, B_s) ds$.
 Before discussing   this  limit we give the definition of a (Stratonovich) stochastic integral with respect to $\int_0^t F_s W(ds, B_s)$.

\begin{defn}\label{def stra}
Let be given a random field $F=\{F_t,t\geq 0\}$ such that
  $\int_0^T |F_s| ds< \infty$
  almost surely, for all $T>0$. Then the Stratonovich integral $\int_t^T F_s {W}(ds,B_s)$ is defined as the following limit in probability
  if it exists (compared this with Proposition \ref{3.1}  when $F_s\equiv  1$):
  $$\int_t^T F_s{\dot{W}}_{\varepsilon,\eta}(s,B_s)ds.$$
\end{defn}

\begin{thm}\label{yw converge}
  Suppose $\sum_{i=1}^d (2H_i - \beta_i)<2$ and $\xi\in L^q(\Om)$ for $\displaystyle q>\frac{2}{2\underline{H} -1}$, where $\underline{H}=\min\{H_0,\ldots,H_d\}$. Then for any $t\in[0,T],$ we have
  $$\int_t^T Y_s^{\varepsilon,\eta}{\dot{W}}_{\varepsilon,\eta}(s,B_s)ds\rightarrow\int_t^T Y_s {W}(ds,B_s)$$
  in $L^2$ sense, as  $\varepsilon,\eta\downarrow 0$.
\end{thm}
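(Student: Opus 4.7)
My plan is to use the approximate BSDE \eqref{e appro1} as the main bookkeeping device and then pass to the limit using results already established earlier. Rearranging \eqref{e appro1} gives the identity
\begin{equation*}
\int_t^T Y_s^{\vare,\eta}\dot W_{\vare,\eta}(s,B_s)\,ds \;=\; Y_t^{\vare,\eta}-\xi+\int_t^T Z_s^{\vare,\eta}\,dB_s.
\end{equation*}
The standing assumption $q>2/(2\underline{H}-1)$ implies $q>2$, so Theorem \ref{sup Y} yields $Y_t^{\vare,\eta}\to Y_t$ in $L^2(\Omega)$, and Theorem \ref{z} combined with It\^o's isometry gives $\int_t^T Z_s^{\vare,\eta}\,dB_s\to\int_t^T Z_s\,dB_s$ in $L^2(\Omega)$. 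Consequently the left-hand side above converges in $L^2$ to the random variable $I:=Y_t-\xi+\int_t^T Z_s\,dB_s$.

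It remains to identify $I$ with $\int_t^T Y_s W(ds,B_s)$ in the sense of Definition \ref{def stra}. Writing the telescoping decomposition
\begin{equation*}
\int_t^T Y_s\dot W_{\vare,\eta}(s,B_s)\,ds \;=\; \int_t^T Y_s^{\vare,\eta}\dot W_{\vare,\eta}(s,B_s)\,ds + \mathcal E_{\vare,\eta},\qquad \mathcal E_{\vare,\eta}:=\int_t^T (Y_s-Y_s^{\vare,\eta})\dot W_{\vare,\eta}(s,B_s)\,ds,
\end{equation*}
it suffices to show that $\mathcal E_{\vare,\eta}\to 0$ in $L^2$: this simultaneously proves existence of the Stratonovich limit in Definition \ref{def stra} and its identification with $I$, which is the content of the theorem. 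To bound $\EE|\mathcal E_{\vare,\eta}|^2$, I would square, expand as a double integral over $(s_1,s_2)\in[t,T]^2$, condition on $\cF^B$, and substitute the representations $Y_s=\EE[\xi\exp V_s\mid\cF_s]$ and $Y_s^{\vare,\eta}=\EE[\xi\exp V_s^{\vare,\eta}\mid\cF_s]$. Since $(V_{s_i},V_{s_i}^{\vare,\eta},\dot W_{\vare,\eta}(s_i,B_{s_i}))_{i=1,2}$ is jointly Gaussian conditionally on $B$, the resulting conditional expectation is computable in closed form through a Gaussian moment identity in the spirit of Lemma \ref{x1x2y}; the pointwise bounds that appear are of the same type already handled in Propositions \ref{3.1}, \ref{exp int V} and Theorems \ref{sup Y}--\ref{z}, and Fernique's theorem supplies an integrable majorant.

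With such a majorant in hand, dominated convergence together with the $L^2$ convergence $V_s^{\vare,\eta}\to V_s$ from Proposition \ref{3.1} and the $L^p$ convergence $Y_s^{\vare,\eta}\to Y_s$ from Lemma \ref{Y converge} gives $\mathcal E_{\vare,\eta}\to 0$ in $L^2$, which concludes the proof. The main obstacle is exactly this $\mathcal E_{\vare,\eta}$ estimate: the kernel $\dot W_{\vare,\eta}(s,B_s)$ is not uniformly square integrable in $s$ as $\vare,\eta\downarrow 0$, so a naive Cauchy--Schwarz separation of $|Y_s-Y_s^{\vare,\eta}|$ from $|\dot W_{\vare,\eta}(s,B_s)|$ fails; moreover both factors depend on $W$, so the joint $B$-conditional Gaussian structure must be exploited throughout rather than bounding each factor in isolation.
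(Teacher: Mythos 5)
Your proposal is correct in its overall architecture, and that architecture coincides exactly with the paper's: rearrange the approximate BSDE \eqref{e appro1} to see that $\int_t^T Y_s^{\vare,\eta}\dot W_{\vare,\eta}(s,B_s)\,ds$ converges in $L^2$ to $Y_t-\xi+\int_t^T Z_s\,dB_s$ (via Lemma \ref{Y converge}, Theorem \ref{sup Y}, Theorem \ref{z} and the It\^o isometry), and then reduce everything to showing that the error term $\int_t^T(Y_s-Y_s^{\vare,\eta})\dot W_{\vare,\eta}(s,B_s)\,ds$ vanishes in $L^2$ --- this is precisely the paper's term $B_t^{\ep,\eta}$ in \eqref{e.3.25}, and your observation that its vanishing simultaneously yields existence of the Stratonovich integral in the sense of Definition \ref{def stra} and its identification with the limit is also the paper's argument. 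Where you genuinely diverge is in the proof of that key estimate. The paper applies the Skorohod decomposition $F\,W(\phi)=\delta(F\phi)+\langle D^WF,\phi\rangle_{\mathcal H}$ to split $B_t^{\ep,\eta}$ into a divergence integral and a trace term, controls the divergence part by the standard bound $\EE[\delta(u)^2]\le \EE\|u\|_{\mathcal H}^2+\EE\|D^Wu\|_{\mathcal H\otimes\mathcal H}^2$ (which requires establishing $Y^{\ep,\eta}\to Y$ in $\mathbb D^{1,2}_W$), and shows the two pieces of the trace term converge to a common limit. You instead propose to expand $\EE|\mathcal E_{\vare,\eta}|^2$ as a double integral and compute the $B$-conditional expectation exactly via Gaussian moment identities, letting the four cross terms in $(e^{V_{s_1}}-e^{V_{s_1}^{\vare,\eta}})(e^{V_{s_2}}-e^{V_{s_2}^{\vare,\eta}})$ cancel in the limit. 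Your route avoids the adjoint operator and the $\mathbb D^{1,2}_W$ convergence of $Y^{\ep,\eta}$ entirely, and your diagnosis of why naive Cauchy--Schwarz fails is exactly right (the paper gets away with a Cauchy--Schwarz separation only \emph{after} the $\mathcal H$-inner product has replaced the singular factors $\dot W_{\vare,\eta}$ by the integrable kernel $|s-r|^{2H_0-2}q(B_s,B_r)$); the price is a heavier direct computation, which you leave as a sketch. To carry it out you would need one device you do not mention: the quantities $Y_{s_1}-Y_{s_1}^{\vare,\eta}$ and $Y_{s_2}-Y_{s_2}^{\vare,\eta}$ are conditional expectations taken at two \emph{different} times, so before conditioning on $\cF^B$ you must represent their product using independent copies $B^1,B^2$ of the Brownian motion exactly as in \eqref{eqn.z_compute} in the proof of Theorem \ref{z}; after that, the required uniform majorants on $\EE^W[\dot W_{\vare,\eta}(s_1,\cdot)\dot W_{\vare,\eta}(s_2,\cdot)]$ and on the cross-covariances with the $V$'s are indeed of the type already established in \eqref{inner prod}, \eqref{Eep} and Proposition \ref{exp int V}, and Fernique's theorem dominates the exponential factor, so the plan closes. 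In short: same skeleton, a defensible alternative for the crucial estimate, but the hardest step is a plan rather than a completed argument.
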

\begin{proof}
  By \eqref{e appro1}, Lemma \ref{Y converge} and Theorem \ref{z}, we know
  $$\int_t^T Y_s^{\varepsilon,\eta} {\dot{W}}_{\varepsilon,\eta}(s,B_s)ds=Y_t^{\varepsilon,\eta}- \xi+\int_t^T Z_s^{\varepsilon,\eta}dB_s$$
  converges in $L^2$ sense to the random field
  $A_t := Y_t- \xi+\displaystyle \int_t^T Z_sdB_s$
  as $\ep, \eta$ tend to zero. Hence, if
 \begin{equation}
B_t^{\ep,\eta}:=\int_t^T \big(Y_s^{\varepsilon,\eta}- Y_s\big){\dot{W}}_{\varepsilon,\eta}(s,B_s)ds\rightarrow 0\label{e.3.25}
\end{equation}   in $L^2(\Omega)$, then we have
  $\displaystyle \int_t^T Y_s{\dot{W}}_{\varepsilon,\eta}(s,B_s)ds = \int_t^T Y_s^{\varepsilon,\eta}{\dot{W}}_{\varepsilon,\eta}(s,B_s)ds - B_t^{\ep,\eta}$
  will converge to $A$ in $L^2(\Omega)$. Previously, we have proved $A$ is well-defined, and then $Y_s$ will be Stratonovich integrable. Thus, by Definition \ref{def stra}, we directly have
  $$ \int_t^T Y_s {W}(ds,B_s) = \lim_{\ep,\eta \downarrow 0} \int_t^T Y_s{\dot{W}}_{\varepsilon,\eta}(s,B_s)ds = A,$$
  i.e., the equation \eqref{e1} is satisfied.

  In the remaining part of the proof, we shall show \eqref{e.3.25}.
  First we note that, recalling the definition of ${\dot{W}}_{\varepsilon,\eta}$ in \eqref{nsp} we have
  \begin{equation}\label{intWdot}
  \int_t^T {\dot{W}}_{\varepsilon,\eta}(s,B_s)ds =\int_{t}^T\int_{\RR^{d}} \int_{t}^T\varphi_{\eta}(s-r) p_{\ep} (B_s -y)\, W(dr,y) dy ds \,.
  \end{equation}
Recall $\displaystyle F\cdot W(\phi) = \delta(F\phi)+\langle D^W F,\phi\rangle _\mathcal{H} $. Then, we obtaion
  \begin{equation*}
    \begin{split}
      \big(Y_s^{\varepsilon,\eta}- Y_s\big){\dot{W}}_{\varepsilon,\eta}(s,B_s) &= \big(Y_s^{\varepsilon,\eta}- Y_s\big)\int_{\RR^{d}}\int_t^s \varphi_{\eta}(s-r) p_{\ep} (B_s -y)
      W(dr,y)dy\\
      &=\int_t^s \int_{\RR^d} \big(Y_s^{\varepsilon,\eta}- Y_s\big) \varphi_{\eta}(s-r) p_{\ep} (B_s -y)\  W({\delta r,y})dy\\
      &\qquad +\big\langle D^W(Y_s^{\varepsilon,\eta}- Y_s),\varphi_{\eta}(s-\cdot ) p_{\ep} (B_s -\cdot)\big\rangle_\mathcal{H}.
    \end{split}
  \end{equation*}
  Hence,  by stochastic Fubini's Theorem, $B_t^{\ep,\eta}$ can be written as
  \begin{equation}
    \begin{split}
      B_t^{\ep,\eta} &= \int_{\RR^d}\int_t^T\int_t^T \big(Y_s^{\varepsilon,\eta}- Y_s\big) \varphi_{\eta}(s-r) p_{\ep} (B_s -y)ds
\, W({\delta r,y})dy\\
      &\qquad + \int_t^T \big\langle D^W(Y_s^{\varepsilon,\eta}- Y_s),  \varphi_{\eta}(s-\cdot ) p_{\ep} (B_s -\cdot)\big\rangle_\mathcal{H} ds\\
      &:= B_t^{\ep,\eta,1}+ B_t^{\ep,\eta,2}.
    \end{split}\label{e.3.27}
  \end{equation}
For the term $B_t^{\ep,\eta,1}$, we define
$$ \phi_{r,y}^{\ep,\eta}  = \int_t^T \big(Y_s^{\varepsilon,\eta}- Y_s\big) \varphi_{\eta}(s-r) p_{\ep} (B_s -y) ds,$$
and with the help of $L^2$ estimate for Skorokhod type stochastic integral, it yields:
\begin{equation}
  \EE\big[ \big(B_t^{\ep,\eta,1}\big)^2\big] \leq \EE\big[ \big\| \phi^{\ep,\eta} \big\Vert_\mathcal{H}^2\big] + \EE\big[ \big\| D ^W\phi^{\ep,\eta} \big\Vert_{\mathcal{H}
  \otimes\mathcal{H} }^2\big].\label{e.3.28}
\end{equation}
The above first term can be estimated as follows:
\begin{equation}\label{first term 1}
  \begin{split}
    \EE\big[\big\| \phi^{\ep,\eta} \big\Vert_\mathcal{H}^2\big]=\EE  \Big[\int_{[t,T]^2} &\big(Y_s^{\varepsilon,\eta}- Y_s\big)\big(Y_r^{\varepsilon,\eta}- Y_r\big)\\
    &\times\big\langle \varphi_{\eta}(s-\cdot )p_{\ep} (B_s -\cdot), \varphi_{\eta}(r-\cdot ) p_{\ep} (B_r -\cdot)\big\rangle_{\mathcal{H}}   dsdr\Big].
  \end{split}
\end{equation}
Recalling the definition in \eqref{inprod func true}, and combining with the proof in Proposition \ref{3.1} (refer to \eqref{estphieta} and \eqref{Eep}) we deduce that
\begin{equation}\label{inner prod}
  \begin{split}
&\big\langle \varphi_{\eta}(s-\cdot )p_{\ep}(B_s -\cdot), \varphi_{\eta}(r-\cdot ) p_{\ep} (B_r -\cdot)\big\rangle_{\mathcal{H}} \\
     =\,& \alpha_{H_0} \int_{[t,T]^2}\int_{\RR^{2d}}  \varphi_{\eta}(s-u) \varphi_{\eta}(r-v)p_{\ep}(B_s -y) p_{\ep} (B_r -z) \\
     &\qquad\qquad\times |u-v|^{2H_0-2} \rho(y)\rho(z)\prod_{i=1}^d R_{H_i} (y_i, z_i)dudvdydz\\
\leq \,&C |r-s|^{2H_0-2}\rho(B_s)\rho(B_r) \prod_{i=1}^d R_{H_i} (B_s^i, B_r^i).
  \end{split}
\end{equation}
Substituting  this into \eqref{first term 1} and with the help of \eqref{R split} we have
\begin{equation}\label{first term 2}
  \begin{split}
    \EE\big[\big\| \phi^{\ep,\eta} \big\Vert_\mathcal{H}^2\big]\leq & C\, \EE  \Big[\int_{[t,T]^2} \big(Y_s^{\varepsilon,\eta}- Y_s\big)\big(Y_r^{\varepsilon,\eta}- Y_r\big)|r-s|^{2H_0-2}  \rho(B_s)\rho(B_r) \prod_{i=1}^d R_{H_i} (B_s^i, B_r^i)\  dsdr\Big]\\
    \leq& C\, \EE  \Big[\sup_{s\in[0,T]} \big(Y_s^{\varepsilon,\eta}- Y_s\big)^4 \Big]^{1/2}\\
    &\quad \times \EE  \Big[ \Big(\int_{[t,T]^2} |r-s|^{2H_0-2} \prod_{i=1}^d
    \Big[(1+|B_s^i|)^{2H_i-\beta_i}(1+|B_r^i|)^{2H_i-\beta_i}dsdr \Big)^2\Big]^{1/2}.
  \end{split}
\end{equation}
Thanks to Theorem \ref{sup Y}, Proposition \ref{exp int V} and the dominated convergence theorem, we see  that $\EE\big[\big\| \phi^{\ep,\eta} \big\Vert_\mathcal{H}^2\big]$ converges to zero as $\ep,\ \eta$
tend to zero. \\

Secondly, we have to deal with $\EE\big[ \big\| D^W \phi^{\ep,\eta} \big\Vert_{\mathcal{H} \otimes\mathcal{H} }^2\big]$, the second term in
\eqref{e.3.28}. By Malliavin calculus
 and \eqref{intWdot} we  have
\begin{equation}\label{DW Y ep}
  \begin{split}
    D^W Y_t^{\ep,\eta} &= \EE  \big[ \xi\, D^W \exp\big(\int_t^T {\dot{W}}_{\varepsilon,\eta}(s,B_s)ds\big)\big|\mathcal{F}_t \big]\\
&= \EE \big[  \xi\,\exp\big(\int_t^T {\dot{W}}_{\varepsilon,\eta}(s,B_s)ds\big)  \int_{t}^T\varphi_{\eta}(s-\cdot) p_{\ep} (B_s -\cdot) ds\big|\mathcal{F}_t \big]
\,.
  \end{split}
\end{equation}
We denote $\mathcal{F}^{B^1,B^2}_{t,s}= \sigma(B^1_u, B^2_v, 0\leq u\leq t,\,  0\leq v\leq s;W(t,x), t\geq 0,x\in\RR^d) $
the $\sigma$-algebra generated by $B^1,B^2$ and $W$. 
Recalling the definition \eqref{nsp} we know that, for random variable $W$ (namely for fixed $B$), $\int_t^T {\dot{W}}_{\varepsilon,\eta}(s,B_s)ds$ is Gaussians. Then Proposition \ref{3.1} and \eqref{inner prod} tell us
\begin{align}\label{second term 2}
\EE^W\langle& D^W Y_t^{\ep,\eta},D^W Y_t^{\ep',\eta'} \rangle_\mathcal{H} \notag\\
&= \EE^W\EE \Big[ \xi(B^1)\xi(B^2)\, \exp\big(\int_t^T {\dot{W}}_{\varepsilon,\eta}(s,B_s^1)ds + \int_t^T {\dot{W}}_{\varepsilon',\eta'}(s,B_s^2)ds \big)\notag\\
&\qquad \times \int_{[t,T]^2}  \langle \varphi_{\eta}(s-\cdot)p_{\ep} (B_s^1 -\cdot), \varphi_{\eta'}(r-\cdot)p_{\ep'} (B_r^2 -\cdot) \rangle_\mathcal{H} dsdr\Big|\mathcal{F}_t^{B^1,B^2}\Big]\Bigg|_{B^1=B^2=B}\notag\\
&\leq \alpha_{H_0}\EE \Bigg[ \xi(B^1)\xi(B^2)\, \EE^W\bigg[\exp\big(\int_t^T {\dot{W}}_{\varepsilon,\eta}(s,B_s^1)ds + \int_t^T {\dot{W}}_{\varepsilon',\eta'}(s,B_s^2)ds \big)\bigg] \\
 & \qquad\times \int_{[t,T]^2}  |s-r|^{2H_0-2} \rho(B_s^{1})\rho(B_r^{2})\prod_{i=1}^d R_{H_i}(B_s^{1,i},B_r^{2,i})dsdr\Big|\mathcal{F}_t^{B^1,B^2}\Bigg]\Bigg|_{B^1=B^2=B}\notag\\
 & =\alpha_{H_0}\EE \bigg[ \xi(B^1)\xi(B^2)\, \exp\Big(\sum_{j,k=1}^2 \int_t^T\int_t^T  |s-r|^{2H_0-2} \rho(B_s^{j})\rho(B_r^{k})\prod_{i=1}^d R_{H_i}(B_s^{j,i},B_r^{k,i})dsdr \Big)\notag\\
 & \qquad\times \int_{[t,T]^2}  |s-r|^{2H_0-2} \rho(B_s^{1})\rho(B_r^{2})\prod_{i=1}^d R_{H_i}(B_s^{1,i},B_r^{2,i})dsdr\Big|\mathcal{F}_t^{B^1,B^2}\bigg]\Bigg|_{B^1=B^2=B}.\notag
\end{align}
We have to prove the integrability of \eqref{second term 2}. Put $a,b$ be two positive constants such that $ 1/a+1/b =1$ and $2a<q$. With the help of  Proposition \ref{exp int V} and H\"{o}lder's inequality,
\begin{align}\label{second term 3}
&\EE \Big[ \xi(B^1)\xi(B^2)\, \exp\big(\sum_{j,k=1}^2 \int_t^T\int_t^T  |s-r|^{2H_0-2} \prod_{i=1}^d R_{H_i}(B_s^{j,i},B_r^{k,i})\rho(B_s^{j,i})\rho(B_r^{k,i})dsdr \big)\notag \\
 &\qquad\qquad \times \int_{[t,T]^2}  |s-r|^{2H_0-2} \rho(B_s^{1})\rho(B_r^{2})\prod_{i=1}^d R_{H_i}(B_s^{1,i},B_r^{2,i})dsdr\Big]\notag \\
\leq &\| \xi \|^2_{q} \Bigg(\EE \Big[\exp\big(2b\,\sum_{j,k=1}^2 \int_t^T\int_t^T  |s-r|^{2H_0-2} \prod_{i=1}^d R_{H_i}(B_s^{j,i},B_r^{k,i})
 \rho(B_s^{j,i})\rho(B_r^{k,i})dsdr \big)\Big]\Bigg)^{1/2b}\\
  &\qquad\qquad \times \Bigg(\EE \Big[ \Big|\int_{[t,T]^2}  |s-r|^{2H_0-2} \rho(B_s^{1})\rho(B_r^{2}) \prod_{i=1}^dR_{H_i}(B_s^{1,i},B_r^{2,i})dsdr\Big|^{2b} \Big]\Bigg)^{1/2b}\notag \\
<& \infty.\notag
\end{align}
That is, we get $\EE \,\langle D^W Y_t^{\ep,\eta},D^W Y_t^{\ep',\eta'} \rangle_\mathcal{H}$ is integrable.
Hence, in a  similar idea as that shown in \eqref{first term 2}, we  obtain  $Y_t^{\ep,\eta}$ also converges to $Y_t$ in $\mathbb{D}^{1,2}_W$ as $\ep,\ \eta \downarrow 0$. Then putting $\ep=\ep',\ \eta=\eta'$,
$$\sup_{\ep,\eta\in(0,1]} \sup_{t\in[0,T]} \EE \| D^W Y_t^{\ep,\eta}\|^2_\mathcal{H} <\infty.$$
Hence, combining \eqref{inner prod}, \eqref{DW Y ep} and \eqref{second term 2}  we have
\begin{align}\label{second term 4}
\notag\EE\Big[ &\| D^W \phi^{\ep,\eta} \|_{\mathcal{H}\otimes\mathcal{H} }^2\Big] \\
\notag& = \EE\Big[ \int_{[t,T]^2}   \big\langle D^W (Y_s^{\ep,\eta}-Y_s),D^W (Y_r^{\ep,\eta}-Y_r)\big\rangle_\mathcal{H}\\
&\notag\qquad\qquad\times\big\langle \varphi_{\eta}(s-\cdot)p_{\ep} (B_s -\cdot),\varphi_{\eta}(r-\cdot)p_{\ep}
(B_r - \cdot) \big\rangle_\mathcal{H} dsdr\Big]\\
& =\alpha_{H_0} \EE\Big[\int_{[t,T]^2} \langle  D^W(Y_s^{\ep,\eta} - Y_s) ,D^W(Y_r^{\ep,\eta} - Y_r)\rangle_\mathcal{H}\\
\notag&\qquad \qquad\times |s-r|^{2H_0-2} \rho(B_s)\rho(B_r)\prod_{i=1}^d R_{H_i}(B_s^i,B_r^i) dsdr\Big] \\
\notag&\leq C\,\EE\Big[\int_{[t,T]^2} \langle  D^W(Y_s^{\ep,\eta} - Y_s) ,D^W(Y_r^{\ep,\eta} - Y_r)\rangle_\mathcal{H} \\
\notag&\qquad\qquad\times |s-r|^{2H_0-2}\prod_{i=1}^d \big[(1+|B_s^i|)^{2H_i-\beta_i}(1+|B_r^i|)^{2H_i-\beta_i}\big]dsdr \Big].
\end{align}
In  a similar method as in the proof of Theorem \ref{z}: there are two positive constants $p',q',\ 1/p'+1/q' =1$ such that $\displaystyle 1<p'< \frac{1}{2-2H_0}$ and $2q'< q$ for which we can deduce\\
\begin{equation}\label{second term 5}
  \begin{split}
&\EE\Big[\| D^W \phi^{\ep,\eta} \|_{\mathcal{H}\otimes\mathcal{H} }^2\Big] \\
   \leq\, &\, \bigg(\int_{[t,T]^2}\EE\Big[ |\langle  D^W(Y_s^{\ep,\eta} - Y_s) ,D^W(Y_r^{\ep,\eta} - Y_r)\rangle_\mathcal{H}|^
    {q'}\Big] dsdr\bigg) ^{1/{q'}}\\
    & \qquad\qquad \times\bigg(\int_{[t,T]^2}\EE\Big[|s-r|^{(2H_0-2)p'}\prod_{i=1}^d \big[(1+|B_s^i|)^{2H_i-\beta_i}(1+|B_r^i|)^{2H_i-\beta_i}\big]^{p'}\Big]dsdr\bigg) ^{1/{p'}},\\
  \end{split}
\end{equation}
where
\begin{equation}\label{second term 6}
  \begin{split}
    &\int_{[t,T]^2}\EE\Big[|s-r|^{(2H_0-2)p'}\prod_{i=1}^d \big[(1+|B_s^i|)^{2H_i-\beta_i}(1+|B_r^i|)^{2H_i-\beta_i}\big]^{p'}\Big]dsdr\\
\leq &\, C \int_{[t,T]^2}|s-r|^{(2H_0-2)p'} dsdr< \infty.
  \end{split}
\end{equation}
Now we only need to study the integrability of the first term on the right side of \eqref{second term 5}. Pick two constants $a,b>1,\ 1/a+1/b=1$ such that $a$ is sufficiently small to satisfy $2aq'<q$. With the help of proof in Proposition \ref{exp int V} and \eqref{second term 3}, we have that
\begin{align}\label{second term 7}
\notag&\left(\int_{[t,T]^2}\EE\Big[ \big|\big\langle  D^WY_s^{\ep,\eta}  ,D^WY_r^{\ep,\eta} \big\rangle_\mathcal{H}\big|^{q'} \Big]dsdr\right)^{1/q'}\\
\notag&\leq\| \xi\| ^{2}_{q}\Bigg(\int_{[t,T]^2}\EE \bigg[ \EE \Big[ \, \exp\big(bq'\sum_{j,k=1}^2 \int_s^T\int_r^T  |u-v|^{2H_0-2} \rho(B_u^{j})\rho(B_v^{k})\prod_{i=1}^d R_{H_i}(B_u^{j,i},B_v^{k,i})
dudv\big)\\
\notag& \qquad \times \Big(\int_s^T\int_r^T |u-v|^{2H_0-2} \rho(B_u^{1})\rho(B_v^{2})\prod_{i=1}^d R_{H_i}(B_u^{1,i},B_v^{2,i})dudv\Big)^{bq'}\big|\mathcal{F}_{s,r}^{B^1,B^2}\Big]
\Big|_{B^1=B^2=B}\bigg]^{1/b}dsdr\Bigg)^{1/q'}\\
&<\infty.
\end{align}
It yields that $\EE\Big[ \| D^W \phi^{\ep,\eta} \|_{\mathcal{H}\otimes\mathcal{H} }^2\Big] $ is integrable. Since we have deduced that $Y^{\ep,\eta}\rightarrow Y$ in $\mathbb{D}^{1,2}_W$, $\ep,\eta\rightarrow 0$, therefore $\EE\Big[ \| D^W \phi^{\ep,\eta} \|_{\mathcal{H}\otimes\mathcal{H} }^2\Big] $ converges to zero as $\ep,\ \eta$ tend to zero. Thus, we get $B_t^{\ep,\eta,1}$ defined in \eqref{e.3.27}
converges to zero in $L^2$ as $\ep,\ \eta$ tend to zero.\\

Now we are going to bound $B_t^{\ep,\eta,2}$.   We have
\begin{equation}\label{DY}
  \begin{split}
    D^W Y_s &= D^W \EE  \big[\xi \exp(\int_s^T {W}(dr,B_r)) \big|\mathcal{F}_s  \big] \\
    &= D^W \EE  \big[\xi \exp(\int_{\RR^d} \int_s^T \delta{(B_r-y)} {W}(dr,y)dy) \big|\mathcal{F}_s  \big]\\
      &=\EE  \big[\xi \exp\big(\int_s^T {W}(dr,B_r)\big) \delta{(B_\cdot-\cdot)}|\mathcal{F}_s  \big].
  \end{split}
\end{equation}
Thus, by \eqref{DW Y ep} and \eqref{DY} we have
\begin{equation}\label{Third term 1}
  \begin{split}
    B_t^{\ep,\eta,2} =& \int_t^T \left\langle D^W(Y_s^{\varepsilon,\eta}- Y_s), \varphi_{\eta}(s-\cdot ) p_{\ep} (B_s -\cdot)\right\rangle_\mathcal{H} ds\\
=&\int_t^T  \EE  \big[\xi\exp\big(\int_s^T {\dot{W}}_{\varepsilon,\eta}(r,B_r)dr\big)\\
&\qquad\qquad\times\int_s^T \big\langle\varphi_{\eta}(r-\cdot)p_{\ep} (B_r -\cdot), \varphi_{\eta}(s-\cdot )p_{\ep} (B_s -\cdot)\big\rangle_\mathcal{H} dr|\mathcal{F}_s  \big]ds \\
&\quad- \int_t^T  \EE  \big[\xi\exp\big(\int_s^T {W}(dr,B_r)\big)\big\langle \delta(B_\cdot -\cdot),\varphi_{\eta}(s-\cdot )p_{\ep} (B_s -\cdot)\big\rangle_\mathcal{H}
|\mathcal{F}_s  \big]ds \\
:=& B_t^{\ep,\eta,3}-B_t^{\ep,\eta,4} .
  \end{split}
\end{equation}
Note that,
\begin{equation*}
  \begin{split}
   &\big\langle \delta(B_\cdot -\cdot),\varphi_{\eta}(s-\cdot )p_{\ep} (B_s -\cdot)\big\rangle_\mathcal{H}\\
= \,&\, \int_{[s,T]^2} \int_{\RR^{2d}} |u-v|^{2H_0-2}\delta(B_u-y)\varphi_{\eta}(s-v )p_{\ep} (B_s -z)\rho(y)\rho(z)\prod_{i=1}^d R_{H_i}(y^i ,z^i)drdv\\
= \,&\, \int_{[s,T]^2} \int_{\RR^{d}} |u-v| ^{2H_0-2}\varphi_{\eta}(s-v )p_{\ep} (B_s -z)\rho(B_u)\rho(z)\prod_{i=1}^d R_{H_i}(B_u^i,y^i)dudvdz.
  \end{split}
\end{equation*}
Thus, by Fubini's Theorem and previous estimates, we have
\begin{equation}\label{Third term 2}
\begin{split}
    |B_t^{\ep,\eta,3}|\leq&  \int_t^T   \EE  \big[\xi\,\exp\big(\int_s^T {\dot{W}}_{\varepsilon,\eta}(r,B_r)dr\big)\int_s^T  |s-r|^{2H_0-2} \rho(B_s) \rho(B_r)\prod_{i=1}^d R_{H_i}(B_s^i,B_r^i) dr|\mathcal{F}_s \big]ds \\
\end{split}
\end{equation}
and
\begin{align}\label{Third term 3}
\notag    |B_t^{\ep,\eta,4}|&= \int_t^T \EE  \big[\xi\exp\big(\int_s^T {W}(dr,B_r)\big)\int_{[s,T]^2} \int_{\RR^{d}} |u-v|^{2H_0-2}\varphi_{\eta} (s-v)\\
    &\qquad\qquad\qquad\qquad \times p_{\ep} (B_s -y)\rho(B_u)\rho(y)\prod_{i=1}^d  R_{H_i} (B_u^i, y_i)dudv dy
    |\mathcal{F}_s  \big]ds \\
\notag    &\leq \int_t^T\EE  \big[\xi\, \exp\big(\int_s^T {W}(dr,B_r)\big) \int_s^T  |s-v|^{2H_0-2}\rho(B_v)\rho(B_s) \prod_{i=1}^d  R_{H_i} (B_v^i, B_s^i)dv |\mathcal{F}_s \big]ds.
\end{align}
 Proposition \ref{exp int V} and dominated convergence theorem guarantee the integrability of these two expressions. Now, with the help of dominated
convergence theorem we get
$B_t^{\ep,\eta,3}$ and $B_t^{\ep,\eta,4}$ converge in $L^2$   to
$$\int_t^T\EE  \big[\xi\,\exp\big(\int_s^T {{W}}(dr,B_r)\big)\int_s^T |s-r| ^{2H_0-2} \rho(B_s)\rho(B_r)\prod_{i=1}^d R_{H_i}(B_s^i,B_r^i) dr |\mathcal{F}_s \big]ds$$
as $\ep,\ \eta$ tend to zero which also mean that $B_t^{\ep,\eta,2}$ converges in $L^2$   to zero as $\ep,\ \eta$ tend to zero.
\end{proof}

\section{H\"{o}lder continuity of Y and Z}\label{s.4}
\ \ \ \
Let the Assumption (2) in Theorem \ref{th1} be satisfied. Now we can prove the H\"older continuity of $Y$ and $Z$.

\begin{proof}First we prove the H\"older continuity of $Y$. Recall $\displaystyle q>\frac{2}{2\underline{H} -1}$, where $\underline{H}=\min\{H_0,\ldots,H_d\}$. Thus for all $a\in(1,q)$, we have
\begin{equation}\label{contiY}
\begin{split}
\EE  \big|Y_t-Y_s\big|^a &= \EE\, \Big|\EE \big[\xi\exp(V_t)\big|\mathcal{F}_t  \big]-\EE \big[\xi\exp(V_s)\big|\mathcal{F}_s  \big]\Big|^a\\
&\leq 2\bigg(\EE \Big|\EE \big[\xi\exp(V_t)\big|\mathcal{F}_t \big]-\EE^B\big[\xi\exp(V_s)\big|\mathcal{F}_t  \big]\Big|^a \\
&\quad +\EE  \Big|\EE \big[\xi\exp(V_s)\big|\mathcal{F}_t  \big]-\EE \big[\xi\exp(V_s)\big|\mathcal{F}_s  \big]\Big|^a\bigg)\\
&=: 2\bigg(I_1 +I_2\bigg).
\end{split}
\end{equation}
For $I_1$, one can use Jensen's inequality and the exponential integrability of Proposition \ref{exp int V} to get, for two positive constants $p',q'$ satisfying $1/p' +1/q' =1$
and $aq'<q$,
\begin{align}\label{perI1}
\notag I_1&=\EE\Big|\EE \big[\xi\exp(V_t)\big|\mathcal{F}_t  \big]-\EE \big[\xi\exp(V_s)\big|\mathcal{F}_t  \big]\Big|^a\\
&\leq\EE\Big|\EE \left[\xi\big(|V_t-V_s|\exp\left(\max\{V_t,V_s\} \right)\big)\big|\mathcal{F}_t \right]\Big|^a\\
&\leq \EE\left[\left(\EE \left[ \xi^{q'} \exp(q'\max\{V_t,V_s \} ) \big|\mathcal{F}_t \right]\right)^{a/q'} \left(\EE  \big[ \big|V_t-V_s\big|^{p'}\big|\mathcal{F}_t \big]\right)^{\frac{a}{p'}}\right]\notag\\
\notag&\leq C \bigg(\EE  \big[ \big|V_t-V_s\big|^{ap'}\big]\bigg)^{\frac{1}{p'}}.
\end{align}
By \eqref{vt}, \eqref{vtv} and the equivalence between the $L^2$-norm and the
$L^p$-norm for a Gaussian random variable, it yields that
\begin{align}\label{YI1}
\notag I_1&\leq\bigg(\EE \big[ \big|V_t-V_s\big|^{ap'}\big]\bigg)^{\frac{1}{p'}} = \bigg(\EE  \left[ \Big|\int_s^t W(dr,B_r)\Big|^{ap'}\right]\bigg)^{\frac{1}{p'}}\leq C\bigg(\EE \Big(\EE^W \big|\int_s^t W(dr,B_r)\big|^{2}\Big)^{ap'/2}\bigg)^{\frac{1}{p'}}\\
&\leq C\bigg(\EE \Big(\int_s^t\int_s^t \alpha_{H_0} |u-v|^{2H_0 -2} \rho(B_u)\rho(B_v)\prod^d_{i=1} R_{H_i}(B_u^i,B_v^i)dudv\Big)^{ap'/2}  \bigg)^{\frac{1}{p'}}\\
&\notag \leq C\bigg(\Big(\int_s^t\int_s^t  |u-v|^{(2H_0 -2)m}dudv\Big)^{\frac{ap'}{2m}}\, \EE \Big(\int_s^t\int_s^t\big(\rho(B_u)\rho(B_v)\prod^d_{i=1} R_{H_i}(B_u^i,B_v^i)\big)^n dudv\Big)^{\frac{ap'}{2n} } \bigg)^{\frac{1}{p'}}\\
\notag&{\leq C\, |t-s|^{aH_0-\ep}}\,,
\end{align}
where $\ep$ is an arbitrary positive constant, $n,m>1$ such that $ \frac{1}{n} + \frac{1}{m} =1$.\\

For $I_2$, denote by $\psi_t=\exp\big(\int_0^t {{W}}(ds,B_s)\big).$ Proposition \ref{exp int V} tells us that, $\xi \psi_T$ is $L^q(\Omega)$-integrable for $\displaystyle q>\frac{2}{2\underline{H} -1}$. Moreover, Clark-Ocone formula implies that,
$$\xi \psi_T = \EE^B [\xi \psi_T] + \int_0^T f_r dB_r,$$
where
\begin{equation}\label{fr}
f_r = \EE [D_r^B (\xi \psi_T)  |\mathcal{F}_r^B]= \EE [\psi_T D_r^B (\xi) |\mathcal{F}_r ] +\EE [\xi D_r^B ( \psi_T)  |\mathcal{F}_r ].
\end{equation}
Thus, from  the  Burkholder-Davis-Gundy inequality and the fact that $a>2$   we  deduce that
\begin{align}\label{YI2}
\notag I_2 &= \EE  \bigg|\psi_s^{-1}\Big( \EE \big[\xi\psi_T\big|\mathcal{F}_t \big]-\EE \big[\xi\psi_T\big|\mathcal{F}_s \big] \Big)\bigg|^a\\
&= \EE  \bigg|\psi_s^{-1} \int_s^t f_r dB_r \bigg|^a\leq \left(\EE  \left[\psi_{s}^{-2a}\right]\right)^{1/2} \left(\EE \left[\int_s^t \big|f_r\big|^2 dr \right]^a\right)^{1/2}\\
\notag&\leq C\,\left( \EE \left[\int_s^t \big|f_r\big|^{2} dr \right]^a\right)^{1/2}.
\end{align}
Taking \eqref{fr} into above formula   yields that
\begin{align*}
&\mathbb{E} \left[\int_s^t \big|f_r\big|^2 dr \right]^{a}\\
&\leq C \left(\EE \int_s^t\left|\EE \left[\psi_T D_r^B(\xi)\big|\mathcal{F}_r\right]\right|^2 dr\right)^{a}
+C\left(\EE \int_s^t\left|\EE \left[\xi D_r^B(\psi_T)\big|\mathcal{F}_r\right]\right|^2 dr\right)^{a/2}\\
&\leq C \left(\int_s^t\EE  \left(D_r^B\xi\right)^{2q'} dr\right)^{a/q'}  \left(\int_s^t \EE \left(\psi_T\right)^{2p'} dr\right)^{a/p'}\\
 &\qquad\qquad+C \left(\int_s^t\EE  [\xi^{2q'}] dr\right)^{a/q'}  \left(\int_s^t \EE \left(D_r^B\psi_T\right)^{2p'} dr\right)^{a/p'}\\
&\leq C |t-s|^{a/q'}\left( \left(\int_s^t \EE \left(\EE^W\left(\psi_T\right)^{2}\right)^{p'} dr\right)^{a/p'} + \left(\int_s^t \EE \left(\EE^W\left(D_r^B\psi_T\right)^{2}\right)^{p'} dr\right)^{a/p'}\right),
\end{align*}
where $C$ is a constant only depends on $p',q',\|D_r^B\xi\|_{L_q}^2$ and $ \|\xi\|_{L_q}^2$. We recall $\psi_s$ and $D_r^B\psi_s$ are centralized Gaussian processes given $B$. Moreover, $\EE^W\left(D_r^B\psi_T\right)^2 = \EE^W\left(D_r^B \exp\big(\int_0^t {{W}}(ds,B_s)\big)\right)^2$ can be treated in a similar way as we did in the proof of Theorem \ref{z}. By Proposition \ref{3.1} and Theorem \ref{z}, we can directly obtain the boundedness of $ \EE \left(\EE^W\left(\psi_T\right)^{2}\right)^{p'}$ and $ \EE \left(\EE^W\left(D_r^B\psi_T\right)^{2}\right)^{p'} $. Thus, we   deduce
$$I_2\leq \left(\mathbb{E} \left[\int_s^t \big|f_r\big|^2 dr \right]^a\right)^{1/2} \leq C \left|t-s\right|^{a/2}.$$
Because we assume that $\displaystyle H>\frac{1}{2}$, the  H\"{o}lder continuous coefficient can only be $\displaystyle\frac{1}{2}$.

Next we have to consider the H\"older continuity of $Z$. Recall (\ref{e.3.10})  for  the expression of $Z$:
\begin{align*}
Z_t &= D_t^B Y_t =\EE  \left[e^{\int_t^T W(d\tau, B_\tau)d\tau} D_t^B \xi + \xi\,\exp\big(\int_t^T {W}(du,B_u)du\big)\int_t^T\nabla_x {W}(ds, B_s)\,\Big|\, \mathcal{F}_t \right]\\
&=\EE  \left[e^{V_t} D_t^B \xi + \xi\,\exp\big(V_t \big) \nabla_x V_t\,\Big|\, \mathcal{F}_t \right]=:    Z_t^1 +Z_t^2\,,
\end{align*}
where we recall the definition of \eqref{vt} for $V_t$ and where we denote
$\nabla_x V_t=
\int_t^T\nabla_x {W}(ds, B_s)$.   $Z^1$ is easy to deal with. In fact, similar to the way to treating    \eqref{contiY}, \eqref{YI1}, \eqref{YI2}, and by the assumption that $D^B\xi\in L^q(\Om)$ and  $\EE |D_t\xi-D_s\xi|^q\le C|t-s|^{\kappa q/2}$ for some $\displaystyle \kappa >0$,  we see
$$\EE\big|Z_t^1 - Z_s^1\big|^2 \leq  C|t-s|^{\kappa\wedge1}.$$
We shall focus on  $Z^2$.
\begin{equation}
\begin{split}
\EE  \big| Z_t^2 - Z_s^2\big|^2 &= \EE  \left( \EE \left[\xi\exp (V_t) \nabla_xV_t\,\big|\,\mathcal{F}_t \right] - \EE \left[\xi\exp (V_s) \nabla_xV_s\,\big|\,\mathcal{F}_s \right]\right)^2 \\
&\leq 2 \EE  \left( \EE \left[\xi\exp (V_t) \nabla_xV_t\,\big|\,\mathcal{F}_t \right] - \EE \left[\xi\exp (V_s) \nabla_xV_s\,\big|\,\mathcal{F}_t \right]\right)^2\\
&\qquad + 2\EE  \left( \EE \left[\xi\exp (V_s) \nabla_xV_s\,\big|\,\mathcal{F}_t \right] - \EE \left[\xi\exp (V_s) \nabla_xV_s\,\big|\,\mathcal{F}_s \right]\right)^2\\
&:= 2(I_1 +I_2).
\end{split}
\end{equation}
For $I_1$, with the help of Jensen's inequality we have
\begin{align*}
I_1&\leq \EE  \left[\left| \xi\exp (V_t) \nabla_xV_t - \xi \exp (V_s) \nabla_xV_s\right|^2\right]\\
&\leq 2 \EE  \left[\left|\xi\left( \exp (V_t)-\exp (V_s)\right) \nabla_xV_t\right|^2 +\left|\xi \exp (V_s)( \nabla_xV_t-\nabla_xV_s)\right|^2\right]\\
&\leq 2 \EE  \left[\left|\xi \exp(\max\{V_t,V_s\}) \nabla_xV_t(V_t-V_s)\right|^2\right] +2\EE \left[\left|\xi \exp (V_s)( \nabla_xV_t-\nabla_xV_s)\right|^2\right]\\
& :=2( I_{1,1} + I_{1,2}).
\end{align*}
We can find two constant $a,b$ such that $1/a +1/b =1$,  $1<a <\frac{ 1}{2-2\underline{H} }$ and $2b<q$.
Then we have
\begin{align}\label{I11}
\notag I_{1,1} &\leq \left(\EE  \left[ \left( \nabla_xV_t\right)^{2a}\right]\right)^{1/a} \left(\EE\,  \xi^{2b}\left(V_t-V_s\right)^{2b}\right)^{1/b}\\
&\leq C\left(\EE \ \xi^{2b}\left(V_t-V_s\right)^{2b}\right)^{1/b}\\
\notag&\qquad\qquad \left(\sum_{i=1}^d \EE^B\left( \int_t^T\int_t^T |u-v| ^{2H_0 -2} |B_u^i-B_v^i|^{2H_i -2} \prod_{j\neq i}^d R_{H_j}(B_u^j-B_v^j) \rho(B_u^j)\rho(B_v^j)dudv\right)^a\right)^{1/a}\\
\notag&\leq C|t-s|^{2H_0-\ep}.
\end{align}
As for  $I_{1,2}$, we deduce similarly that
\begin{align}\label{I12}
\notag I_{1,2} &\leq \EE  \left[|\xi|^{2b} \exp(2b V_t)\right]^{1/b}\left(\EE   \left[ \nabla_xV_t-\nabla_xV_s\right]^{2a} \right)^{1/a}\\
 &\notag\le C_a\EE  \left[|\xi|^{2b} \exp(2b V_t)\right]^{1/b}  \left(\EE   \left[\EE ^W \left\{ \int_s^t\int_s^t \left(\nabla_x W(du,B_u)\right)^T\left(\nabla_x W(dv,B_v)  \right)\right\}   \right]^a\right)^{1/a}\\
&\leq C   \left[\EE  \left(\left|\sum_{i=1}^d \int_s^t\int_s^t|u-v|^{2H_0-2} |B_u^i-B_v^i|^{2H_i-2}\prod_{j\neq i}^d R_{H_j}( B_u^j,B_v^j)\rho(B_u^j)\rho(B_u^j)dudv\right|^a\right)\right]^{1/a}\\
&\notag\leq C \sum_{i=1}^d \left(\int_s^t\int_s^t |u-v|^{(2H_0-2)a+(H_i-1)a}   dudv\right)^{1/a}\\
&\notag\leq C |t-s|^{2H_0  + \underline{H}-1-\ep}.
\end{align}
As  $I_2$, the
Clark-Ocone formula  yields
\begin{equation}\label{I2}
\xi\exp(V_s) \nabla_x V_s = \EE^B\left[\xi\exp(V_s) \nabla_x V_s\right] + \int_s^T \EE \left[D_r^B\left(\xi\exp(V_s) \nabla_x V_s\right)\Big|\mathcal{F}_r \right] dB_r.
\end{equation}
Thus we have
\begin{equation}\label{I2 express}
\begin{split}
I_2 &= \EE \left|\int_s^t \EE  \left[D_r^B \left(\xi\exp(V_s) \nabla_x V_s\right)\Big|\mathcal{F}_r \right]dB_r\right|^2= \EE  \int_s^t\left(\EE  \left[D_r^B\left(\xi\exp(V_s) \nabla_x V_s\right)\Big|\mathcal{F}_r \right]\right)^2dr\\
& = \EE  \int_s^t\left(\EE  \left[\xi\nabla_x V_s D_r^B\left(\exp(V_s)\right)\Big|\mathcal{F}_r \right]\right)^2dr+ \EE  \int_s^t\left(\EE \left[\xi\exp(V_s)D_r^B\left(\nabla_x V_s\right)\Big|\mathcal{F}_r \right]\right)^2dr\\
&\qquad\qquad +\EE  \int_s^t\left(\EE  \left[\exp(V_s)\nabla_x V_s \,D_r^B\xi\Big|\mathcal{F}_r \right]\right)^2dr \\
& =: I_{2,1} + \int_s^t I_{2,2}dr +I_{2,3}.
\end{split}
\end{equation}
The integrability inside the integral of  $I_{2,3}$ is obvious
 due to \eqref{I11}. For $I_{2,1}$ we have
\begin{align*}
\EE  &\int_s^t\left(\EE  \left[D_r^B\left(\exp(V_s)\right)\xi\,\nabla_x V_s\Big|\mathcal{F}_r \right]\right)^2dr=\EE   \int_s^t\left(\EE  \left[ \xi \exp(V_s)\,\left(\nabla_x V_s\right)^2 \Big|\mathcal{F}_r \right]\right)^2dr\\
&\qquad\leq \int_s^t\left(\EE [ \xi^b\,\exp(b V_s)]\right)^{2/b} \left(\EE  (\nabla_x V_s)^{2a}\right)^{2/a}   ds\le C|t-s|\,.
\end{align*}
Finally, we deal with $I_{2,2}$.  We shall use the technique as in \eqref{eqn.z_compute}.   Notice  that,
$$D_r^B\left(\nabla_x V_s\right)=D_r^B\left(\int_s^T \nabla_xW(du,B_u) \mathrm{1}_{[0,u]}(r)\right)=\int_{s\vee r} ^T \nabla^2_x W(du,B_u)  \,. $$
  We have analogously  to \eqref{eqn.z_compute}
\begin{align}\label{I22}
I_{2,2} &= \EE  \Bigg(\EE  \bigg[\xi(B^1)\,\xi(B^2)\exp\left(\sum_{j,k=1}^2 \frac{\alpha_{H_0}}{2} \int_s^T \int_s^T |u-v|^{2H_0-2}R_{H_i}(B_u^j,B_v^k)\rho(B_u^j)\rho(B_v^k)dudv\right)\\
\notag &\qquad\times \left(\int_{s\vee r} ^T\int_{s\vee r}^T  \tr \left[ \left( \nabla_x^2 W(du,B_u^1)\right)^T \nabla_x^2 W(dv,B_v^2)\right]  dudv\right)\bigg|\mathcal{F}_r^{B^1,B^2}\bigg]\Bigg|_{ B^1=B^2=B}\Bigg).
\end{align}
Using the H\"older inequality, we have
\begin{align}\label{I22}
I_{2,2} &= \Bigg[\EE  \Bigg(\EE  \bigg[\bigg|\xi(B^1)\,\xi(B^2)\exp\bigg(\sum_{j,k=1}^2 \frac{\alpha_{H_0}}{2} \int_{s\vee r}^T \int_{s\vee r}^T |u-v|^{2H_0-2}\nonumber \\
&\qquad\qquad  R_{H_i}(B_u^j,B_v^k)\rho(B_u^j)\rho(B_v^k)dudv\bigg)\bigg|^b \bigg|\mathcal{F}_r^{B^1,B^2}\bigg]\Bigg|_{ B^1=B^2=B}\Bigg)\Bigg]^{1/b} \\
\notag &\qquad\times
 \Bigg[\EE  \Bigg(\EE \Bigg[\bigg|\int_{s\vee r} ^T\int_{s\vee r}^T  \tr \bigg[ \left( \nabla_x^2 W(du,B_u^1)\right)^T
  \nabla_x^2 W(dv,B_v^2)\bigg]  dudv\bigg|^a\bigg|\mathcal{F}_r^{B^1,B^2}\Bigg]\Bigg|_{ B^1=B^2=B}\Bigg)\Bigg]^{1/a}\nonumber\\
 &\le  C  I_{2,2,1}^{1/a} \,, \nonumber
\end{align}
where
\[
I_{2,2,1}=\EE  \Bigg(\EE  \Bigg[\left|\int_{s\vee r} ^T\int_{s\vee r}^T  \tr \left[ \left( \nabla_x^2 W(du,B_u^1)\right)^T \nabla_x^2 W(dv,B_v^2)\right]  dudv\right|^a\bigg|\mathcal{F}_r^{B^1,B^2}\Bigg]\Bigg|_{ B^1=B^2=B}\Bigg)\,.
\]
We shall  consider the term that
contains  $\displaystyle J=\int_{s\vee r}^T\int_{s\vee r}^T  \frac{\partial^2}{\partial x_i^2}W(du,B_u^1)\frac{\partial^2}{\partial x_i^2}W(dv,B_v^2)   $
(denote the corresponding term by $J_i$)  since the other terms can be treated in similar way.   When $r\ge s$,  we have   for any $a>1$,
\begin{align*}
J_i=&\EE \Bigg(\EE   \left[\left|\int_r^T\int_r^T  \frac{\partial^2}{\partial x_i^2}W(du,B_u^1)\frac{\partial^2}{\partial x_i^2}W(dv,B_v^2)  \right|^a \mathcal{F}_r^{B^1,B^2}\bigg]\right|_{B^1=B^2=B}\Bigg)\\
\le &C_a \EE   \Bigg(\EE   \left[ \left(\EE^W  \left|\int_r^T\int_r^T  \frac{\partial^2}{\partial x_i^2}W(du,B_u^1)\frac{\partial^2}{\partial x_i^2}W(dv,B_v^2)  \right|^2\right) ^{a/2} \Bigg| \mathcal{F}_r^{B^1,B^2}\right]\Bigg|_{B^1=B^2=B}\Bigg)\\
\le &C_a\EE  \Bigg( \EE \bigg[
\bigg( \int_r^T\int_r^T |u-v|^{2H_0-2} |B_u^{1,i}- B_v^{2,i}|^{2H_i-4} \\
& \qquad\qquad  \rho(B_u^1) \rho(B_v^2)
\prod_{j\not=i} |R_{H_j}(B_u^{1,j}, B_v^{2,j})|dudv\bigg)^{a/2} \bigg|\mathcal{F}_r^{B^1,B^2}\bigg]\Bigg|_{B^1=B^2=B}\Bigg) +C_a\,,
\end{align*}
where in the above first inequality, we used the hypercontractivity for $\EE^W$ and in the above last inequality, there are terms such as the derivatives with respect to $\partial _{x_i}^2 \rho$ and $\partial _{x_i}  \rho \partial _{x_i} R_{H_i}$ which are easy to  be bounded.  By using   H\"older's inequality again, the above expectation is bounded by a multiple of $1/a'$ power of   (for any $a'>1$)
\begin{align*}
 &\EE  \left( \EE \left[    \left( \int_r^T\int_r^T |u-v|^{2H_0-2} |B_u^{1,i}- B_v^{2,i}|^{2H_i-4} dudv\right)^{aa'/2} \bigg|\mathcal{F}_r^{B^1,B^2}\right]\Bigg|_{B^1=B^2=B}\right)\\
=&\EE  \Bigg( \EE \bigg[ \bigg|\int_r^T\int_r^T |u-v|^{2H_0-2} |(B_u^{1,i}-B_r^{1,i})- (B_v^{2,i}-B_r^{2,i})\\
& \qquad \qquad \qquad\qquad\qquad\qquad+B_r^{1,i}-B_r^{2,i}|^{2H_i-4} dudv\bigg|^{aa'/2}\bigg|\mathcal{F}_r^{B^1,B^2}\bigg]\Bigg|_{B^1=B^2=B}\Bigg)\\
=&\EE  \Bigg( \EE^{X,Y}\bigg[ \int_r^T\int_r^T |u-v|^{2H_0-2} |\sqrt{u-r}X-\sqrt{v-r}Y\\
&\qquad \qquad\qquad\qquad \qquad\qquad+ B_r^{1,i}-B_r^{2,i}|^{2H_0-4} dudv\bigg]^{aa'/2}\Bigg|_{B^1=B^2=B}\Bigg)\,,
\end{align*}
where $X$ and $Y$ are two independent standard
Gaussians.
The above expectation in $X$ and $Y$ are bounded by (denoting $Z=B_r^{1,i}-B_r^{2,i}$
and choosing $aa'/2<1$)
\begin{align*}
&\EE^{X,Y}\bigg[ \int_r^T\int_r^T |u-v|^{2H_0-2} |\sqrt{u-r}X-\sqrt{v-r}Y  + Z |^{2H_0-4} dudv\bigg]^{aa'/2}\\
 & \le  \left[\int_r^T\int_r^T |u-v|^{2H_0-2} \int_{\RR^2}\frac{1}{\sqrt{2\pi}} e^{-x^2/2}\frac{1}{\sqrt{2\pi}} e^{-y^2/2} |\sqrt{u-r}x-\sqrt{v-r}y+Z|^{2H_i-4} dxdy  dudv \right]^{aa'/2}\\
 & \le \Bigg[\int_r^T\int_r^T |u-v|^{2H_0-2}\frac{1}{\sqrt{(u-r)(v-r)}}\\
&\qquad \qquad \left[\int_{\RR^2}\frac{|xy|}{4}\frac{1}{\sqrt{2\pi}} e^{-x^2/2}\frac{1}{\sqrt{2\pi}} e^{-y^2/2} |\sqrt{u-r}x-\sqrt{v-r}y+Z|^{2H_i-2} dxdy\right] dudv\Bigg] ^{aa'/2} \\
& \le \Bigg[\int_r^T\int_r^T |u-v|^{2H_0-2}\frac{1}{\sqrt{(u-r)(v-r)}}\\
&\qquad \qquad \left[\int_{\RR^2} \frac{1}{\sqrt{2\pi}} e^{-x^2/4}\frac{1}{\sqrt{2\pi}} e^{-y^2/4} |\sqrt{u-r}x-\sqrt{v-r}y+Z|^{2H_i-2} dxdy\right] dudv\Bigg] ^{aa'/2} \\
 &= C\, \left[ \int_r^T\int_r^T |u-v|^{2H_0-2} \frac{1}{\sqrt{(u-r)(v-r)}} \EE^{X,Y}|\sqrt{u-r}X-\sqrt{v-r}Y+Z|^{2H_i-2} dudv\right]^{aa'/2}  \\
 & \le C\,   \left[ \int_r^T\int_r^T |u-v|^{2H_0-2} |u-r|^{-1/2}|v-r|^{-1/2}|u-r+v-r|^{2H_i-2} dudv\right]^{aa'/2}  \\
 & \le C\,   \left[ \int_r^T\int_r^T |u-v|^{2H_0-2} |u-r|^{-1/2}|v-r|^{-1/2}|u-r|^{H_i-1}|v-r|^{ H_i-1} dudv\right]^{aa'/2}  \\
 & <\infty\,,
\end{align*}
where the third last inequality follows from
Lemma A.1 of \cite{HNS2011} and the last inequality holds true since $H_i>1/2$.
This proves that $I_{2,2}$ is bounded and hence $I_2\le C|t-s|$.
Hence, Combing \eqref{I11}, \eqref{I12} and \eqref{I2 express}, we have
$$\EE  \big| Z_t^2 - Z_s^2\big|^2 \leq C |t-s|^{2H_0 -1 + \underline{H}-\ep},$$
and finally we deduce
$$\EE  \big| Z_t - Z_s\big|^2 \leq C |t-s|^{(2H_0 -1 + \underline{H}-\ep)\wedge\kappa} ,\ \mbox{for all}\  \ep >0.$$
\end{proof}
\section{Uniqueness of solution}
We have proved  parts (1) and (2) of Theorem \ref{th1}.
In this section, we are going to prove part (3),  the uniqueness of BSDEs \eqref{e1}.
  We   need the following  proposition.
\begin{prop}\label{Uniq}
Suppose that the  conditions in Theorem \ref{th1} are  satisfied. Let $(Y,Z)\in\mathcal{S}^2_{\mathbb{F} }(0,T;\RR) \times\mathcal{M} ^2_{\mathbb{F} }(0,T;\RR^d)$ be the solution of BSDEs \eqref{e1} so that  $Y, D^B Y$ are  $ \mathbb{D}^{1,2}$. Then the solution   has the  explicit expression \eqref{e.1.6} and hence the BSDEs \eqref{e1} has a unique solution.
\end{prop}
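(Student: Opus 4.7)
Given any solution $(Y,Z)$ as in the statement, I would identify it with \eqref{e.1.6} by proving that the auxiliary process $M_t := Y_t \psi_t$, with $\psi_t := \exp\bigl(\int_0^t W(dr, B_r)\bigr)$, is a true $(\cF_t)$-martingale. Once this is granted, specialization to $t=T$ (using $Y_T = \xi$) and conditioning give $Y_t \psi_t = \EE[\xi \psi_T \mid \cF_t]$; division by the $\cF_t$-adapted and a.s.\ positive $\psi_t$ then yields the first line of \eqref{e.1.6}. For the $Z$-component I would apply the Clark-Ocone formula to $\xi \psi_T$ (whose $L^p$-integrability for some $p>2$ is available from Proposition~\ref{exp int V}) and use the hypothesis $Y, D^B Y \in \mathbb{D}^{1,2}$ to commute the Malliavin operator $D_t^B$ through the conditional expectation, obtaining $Z_t = D_t^B Y_t$ and hence the second line of \eqref{e.1.6} after the same manipulation already carried out in the proof of Theorem~\ref{z}.

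The martingale property of $M$ would be proved by mollification. Let $\psi_t^{\varepsilon,\eta} := \exp\bigl(\int_0^t \dot W_{\varepsilon,\eta}(r, B_r)\,dr\bigr)$, which is absolutely continuous in $t$, so the classical integration-by-parts rule applies to $Y_t \psi_t^{\varepsilon,\eta}$ without any quadratic-covariation correction. Combined with the defining BSDE \eqref{e1}, this gives
\begin{equation*}
\xi \psi_T^{\varepsilon,\eta} - Y_t \psi_t^{\varepsilon,\eta} = \int_t^T Y_s \psi_s^{\varepsilon,\eta}\, \dot W_{\varepsilon,\eta}(s, B_s)\,ds - \int_t^T Y_s \psi_s^{\varepsilon,\eta}\, W(ds, B_s) + \int_t^T \psi_s^{\varepsilon,\eta} Z_s\, dB_s .
\end{equation*}
Taking $\cF_t$-conditional expectations annihilates the Brownian integral (thanks to $\psi^{\varepsilon,\eta} \in L^p$ for every $p$ and $Z \in \mathcal{M}^2$). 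As $\varepsilon,\eta \downarrow 0$, the left-hand side converges in $L^1$ to $\EE[\xi \psi_T \mid \cF_t] - Y_t \psi_t$ via Lemma~\ref{Y converge}-type estimates together with the exponential integrability in Proposition~\ref{exp int V}. The two $W$-related terms on the right-hand side should both converge to $\EE\bigl[\int_t^T Y_s \psi_s\, W(ds, B_s) \mid \cF_t\bigr]$; once this is shown they cancel, and the identity $Y_t \psi_t = \EE[\xi \psi_T \mid \cF_t]$ follows.

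The main obstacle is precisely this double convergence: the smoothed Lebesgue integral and the Stratonovich integral against $W$ must be shown to share the same limit when applied to $Y_s \psi_s^{\varepsilon,\eta}$. My plan is to adapt the strategy developed in the proof of Theorem~\ref{yw converge}, writing each integrand in its Skorokhod decomposition $F \cdot W(\phi) = \delta(F\phi) + \langle D^W F,\phi\rangle_\cH$ applied to $F = Y_s \psi_s^{\varepsilon,\eta}$ and $\phi = \varphi_\eta(s-\cdot)p_\varepsilon(B_s-\cdot)$; the Skorokhod pieces will match in the limit while the trace corrections involving $D^W(Y_s \psi_s^{\varepsilon,\eta})$ are shown to cancel pairwise. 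It is here that the hypotheses $Y, D^B Y \in \mathbb{D}^{1,2}$ play the decisive role: they guarantee $Y \psi^{\varepsilon,\eta} \in \mathbb{D}^{1,2}_W$ uniformly in $(\varepsilon,\eta)$ and furnish the $L^2$-control of the relevant inner products $\langle D^W(Y\psi^{\varepsilon,\eta}),\cdot\rangle_\cH$ required to pass to the limit by dominated convergence, in the same spirit as the estimates \eqref{DW Y ep}--\eqref{second term 7}. With the martingale identification of $M$ in hand, the explicit formulas \eqref{e.1.6} and hence uniqueness follow as sketched in the first paragraph.
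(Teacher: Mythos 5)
Your overall architecture coincides with the paper's: both arguments multiply by the integrating factor $\psi_t=\alpha_0^t$, show that $\xi\psi_T-Y_t\psi_t$ reduces to a Brownian stochastic integral (equivalently, that $Y_t\psi_t$ is a martingale), condition on $\cF_t$ and divide by the positive adapted $\psi_t$, and then identify $Z_t=D_t^BY_t$. Where you genuinely diverge is in how the key identity \eqref{e.5.5} is obtained. The paper stays at the discrete level throughout: it first proves the chain rule \eqref{alexpr} for $\alpha_0^t$ by Taylor expansion along a partition (using $\sum_i\big|\int_{t_i}^{t_{i+1}}W(dr,B_r)\big|^2\to 0$, valid since $H_0>1/2$), then performs an Abel summation of $\alpha_0^{t_{i+1}}Y_{t_{i+1}}-\alpha_0^{t_i}Y_{t_i}$ in which the two terms $\mp\,\alpha_0^{t_i}Y_{t_i}\int_{t_i}^{t_{i+1}}W(dr,B_r)$ cancel \emph{exactly on each partition interval before any limit is taken}; all the work then sits in the five remainder terms of \eqref{tilR}, controlled via \eqref{F2split} and the $\mathbb{D}^{1,2}$ hypotheses. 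Your mollification of the integrating factor elegantly dispenses with the chain-rule lemma (since $\psi^{\varepsilon,\eta}$ solves an honest ODE), but it pushes the cancellation into the limit, and this costs you two things the paper never has to pay for. First, the ``classical integration by parts without quadratic-covariation correction'' is not actually classical here: $Y$ is not a semimartingale, so substituting $dY_s=-Y_sW(ds,B_s)+Z_s\,dB_s$ into $\int_t^T\psi_s^{\varepsilon,\eta}\,dY_s$ requires an associativity property for the Stratonovich integral of Definition \ref{def stra}, whose proof is a Riemann-sum argument of exactly the kind the paper carries out. Second, you must construct the limiting object $\int_t^TY_s\psi_s\,W(ds,B_s)$ and show that both the mollified Lebesgue integral and the Stratonovich integral of $Y\psi^{\varepsilon,\eta}$ converge to it; the paper's exact pre-limit cancellation is precisely the device that avoids ever forming this integral. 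Your Skorokhod-plus-trace plan for that double convergence is the right tool (it mirrors Theorem \ref{yw converge}), so the route is viable, but you should be aware that the step you label ``the main obstacle'' is where essentially all of the paper's remainder analysis reappears in disguise rather than being bypassed.
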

Before we prove  Proposition \ref{Uniq}, we first need the  following lemma.

\begin{lem}
Recall the  notation
\begin{equation}
\al_s^t=\exp\left\{\int_s^t {W} (dr,B_r)\right\}\,.
\label{e.3.43}
\end{equation}
Then $\al_s^t $ satisfies the following equation.
\begin{equation}\label{alexpr}
\alpha_0^t = \alpha_0^s+  \int^{t}_{0}\alpha_0^r\, W(dr,B_r).
\end{equation}
\end{lem}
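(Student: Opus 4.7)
The plan is to obtain the identity by regularization: establish it first for the mollified noise $\dot{W}_{\varepsilon,\eta}$, where every calculation is classical, and then pass to the limit using Definition~\ref{def stra} together with the convergence machinery already developed in Section~\ref{s.3}. Put $\alpha^{\varepsilon,\eta}_t := \exp\!\big(\int_0^t \dot{W}_{\varepsilon,\eta}(r,B_r)\,dr\big)$. Since $r \mapsto \dot{W}_{\varepsilon,\eta}(r,B_r)$ is pathwise continuous in $r$, the fundamental theorem of calculus together with the classical chain rule yields
\begin{equation*}
\alpha^{\varepsilon,\eta}_t \;=\; \alpha^{\varepsilon,\eta}_s \;+\; \int_s^t \alpha^{\varepsilon,\eta}_r\,\dot{W}_{\varepsilon,\eta}(r,B_r)\,dr, \qquad 0\le s\le t\le T,
\end{equation*}
and specializing to $s=0$, with $\alpha^{\varepsilon,\eta}_0=1$, gives $\alpha^{\varepsilon,\eta}_t = 1 + \int_0^t \alpha^{\varepsilon,\eta}_r \,\dot{W}_{\varepsilon,\eta}(r,B_r)\,dr$.

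Next I would pass $\varepsilon,\eta\downarrow 0$ on the left side. Proposition~\ref{exp int V}, combined with the dominated convergence argument already used in Lemma~\ref{Y converge}, gives $\alpha^{\varepsilon,\eta}_t \to \alpha_0^t$ in $L^p(\Omega)$ for every $p\ge 1$. For the right side, by Definition~\ref{def stra} it is enough to exhibit a probability limit of $\int_0^t \alpha^{\varepsilon,\eta}_r \,\dot{W}_{\varepsilon,\eta}(r,B_r)\,dr$; that limit is then by definition the Stratonovich integral $\int_0^t \alpha_0^r\,W(dr,B_r)$. The required convergence mirrors the proof of Theorem~\ref{yw converge} with the BSDE process $Y^{\varepsilon,\eta}$ replaced by the exponential $\alpha^{\varepsilon,\eta}$. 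Writing
\begin{equation*}
\int_0^t \alpha^{\varepsilon,\eta}_r\, \dot{W}_{\varepsilon,\eta}(r,B_r)\,dr
\;=\;\int_0^t (\alpha^{\varepsilon,\eta}_r - \alpha_0^r)\,\dot{W}_{\varepsilon,\eta}(r,B_r)\,dr
\;+\;\int_0^t \alpha_0^r\,\dot{W}_{\varepsilon,\eta}(r,B_r)\,dr,
\end{equation*}
the Skorohod--Stratonovich decomposition $F\cdot W(\phi)=\delta(F\phi)+\langle D^W F,\phi\rangle_{\mathcal{H}}$ is applied to each piece, and the two norms $\EE\|\phi^{\varepsilon,\eta}\|_{\mathcal{H}}^2$ and $\EE\|D^W\phi^{\varepsilon,\eta}\|_{\mathcal{H}\otimes\mathcal{H}}^2$ are bounded in the same way as in \eqref{e.3.27}--\eqref{second term 7}. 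Combining the two limits yields $\alpha_0^t = 1 + \int_0^t \alpha_0^r\,W(dr,B_r)$, which is the stated identity with base point $0$; the general base point $s$ follows by exactly the same argument applied to the integral from $s$ to $t$.

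The main technical obstacle is the verification required for the convergence on the right side. Concretely, one needs $\alpha_0^r \in \mathbb{D}^{1,2}_W$ with moment bounds on $\|D^W\alpha_0^r\|_{\mathcal{H}}$ uniform in $r\in[0,T]$. This follows from Proposition~\ref{exp int V} (exponential integrability), Gaussian hypercontractivity, and the informal identity $D^W_{(u,y)}\alpha_0^r = \alpha_0^r\,\mathbf{1}_{[0,r]}(u)\,\delta(B_u-y)$, which reduces the needed estimates to moments of $\alpha_0^r$ times the covariance integrals already controlled in Proposition~\ref{3.1}. Once these bounds are in hand, the stochastic Fubini step and the dominated convergence argument of Theorem~\ref{yw converge} transfer verbatim, so the argument is lengthy but entirely routine given Section~\ref{s.3}.
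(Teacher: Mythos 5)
Your proposal is correct in outline but takes a genuinely different route from the paper's. The paper works directly with the un-mollified process: setting $K_t=\int_0^t W(dr,B_r)$, it expands over a partition, $\alpha_0^t-1=\sum_i \alpha_0^{t_i}\left(K_{t_{i+1}}-K_{t_i}\right)+R^n_t$, bounds the Taylor remainder by $C\sup_{0\le r\le t} e^{K_r}\cdot\sum_i\left|K_{t_{i+1}}-K_{t_i}\right|^2$, and kills it using the second-moment estimate $\EE\left|K_{t_{i+1}}-K_{t_i}\right|^2\le C\left|t_{i+1}-t_i\right|^{2H_0}$ with $2H_0>1$ (a vanishing-quadratic-variation argument via Markov's inequality) together with the exponential integrability of $\sup_r K_r$; the surviving Riemann sum is then identified with $\int_0^t\alpha_0^r\,W(dr,B_r)$. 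You instead mollify first, apply the pathwise fundamental theorem of calculus to $\alpha^{\varepsilon,\eta}$, and pass to the limit via Definition \ref{def stra} and the Skorohod--Stratonovich decomposition, re-running the $\mathbb{D}^{1,2}_W$ estimates of Theorem \ref{yw converge} with $Y^{\varepsilon,\eta}$ replaced by $\alpha^{\varepsilon,\eta}$. Your route is technically heavier --- it requires the uniform bounds on $\EE\|D^W\alpha_0^r\|^2_{\mathcal H}$ and the $\mathcal H\otimes\mathcal H$ estimates --- but it has the virtue of producing the integral exactly as the limit object of Definition \ref{def stra}, which is the notion used elsewhere in Section \ref{s.3}; the paper's discretization is shorter and needs only second- and exponential-moment bounds, though it leaves the identification of the Riemann-sum limit with the Stratonovich integral of Definition \ref{def stra} somewhat implicit. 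Both arguments ultimately hinge on $H_0>1/2$: yours through the convergence of the mollified integrals, the paper's through the vanishing in probability of $\sum_i\left|K_{t_{i+1}}-K_{t_i}\right|^2$.
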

\begin{proof}
Define $ K_t = \int_0^t {W} (dr,B_r)$.
Consider  a sequence of partitions $\pi_n = \{0=t_0 <t_1 <\ldots<t_{n} =t\}$ such that  $|\pi_n|=\max_{0\le i\le n-1} (t_{i+1} -t_i ) \rightarrow 0$ when $n\rightarrow \infty$  (the $t_i$'s  depend
on $n$  and we omit this explicit dependence to simplify notation). Since $H\in(1/2,1)$ and since $W$ satisfies \eqref{e.1.1}, by Markov's inequality, Proposition \ref{3.1} and the estimate \eqref{EIinfyty}, it is easy to obtain
\begin{align}
\lim_{n \rightarrow \infty} P\left\{\sum_{i=1}^{n}\left|\int_{t_i}^{t_{i+1}} W(dr,B_r)\right|^2 >\ep \right\}&\leq \lim_{n \rightarrow \infty}\frac{ \sum_{i=1}^{n} \EE\left[| \int_{t_i}^{t_{i+1}} W(dr,B_  r)|^2\right]}{\ep}\notag\\
&\leq \lim_{n \rightarrow \infty}\frac{ C \sum_{i=1}^{n} |t_{i+1}-t_{i}|^{2H_0}}{\ep}\notag\\
&=0,\label{e.3.46}
\end{align}
for any $\ep>0$.  On the other hand,  we have
\begin{equation}
\al_0^t-1 = \sum_{i=0}^n \left[ e^{K_{t_{i+1}}} -e^{K_{t_{i}}} \right] = \sum_{i=0}^n  \alpha_0^{t_i} \left(K_{t_{i+1}}- K_{t_{i}} \right) + R_t^n,
\end{equation}
where
$$R_t^n = \sum_{i=0}^n  \left(K_{t_{i+1}}- K_{t_{i}} \right)\int_0^1
\left[ e^  {K_{t_{i }} +(K_{t_{i+1 }}-K_{t_{i }})u}-e^{K_{t_{i }}}\right]   du. $$
Combining  \eqref{e.3.46} with    \eqref{estYinS2} yields
$$|R_t^n|\leq C \sup_{0\leq r\leq t} e^{K_r}\cdot \sum_{i=0}^n | K_{t_{i+1}} -K_{t_i}|^2\stackrel{P}{\longrightarrow} 0, \ \ n\rightarrow \infty. $$
This proves  that $\al_0^t$ satisfies \eqref{alexpr}.
\end{proof}
\begin{lem}  Let $(Y,Z)$ satisfy   \eqref{e1} and let $\al_t$ be given as above.
Suppose the conditions of Proposition \ref{Uniq} are satisfied.
Then
\begin{align}\label{e.5.5}
\alpha_0^T \xi -\alpha_0^t Y_t & =  \int_t^T   \alpha_0^{s}  Z_s     dB_s  \,.
\end{align}
\end{lem}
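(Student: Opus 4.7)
The plan is to derive \eqref{e.5.5} by applying an integration by parts formula to the product $\alpha_0^t Y_t$, with the driving noise regularized by the mollification $\dot{W}_{\varepsilon,\eta}$ from \eqref{nsp}. Informally, the BSDE \eqref{e1} reads $dY_t = -Y_t\,W(dt,B_t) + Z_t\,dB_t$ in differential form, and the preceding lemma gives $d\alpha_0^t = \alpha_0^t\,W(dt,B_t)$, so a product rule should yield
\begin{align*}
d(\alpha_0^t Y_t) = Y_t\,d\alpha_0^t + \alpha_0^t\,dY_t = \alpha_0^t Z_t\,dB_t,
\end{align*}
since the two $W$-differentials cancel and, by the independence of $W$ and $B$, no It\^o correction arises between $\alpha_0^t$ and the $B$-martingale $\int_0^\cdot Z_s\,dB_s$. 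Integrating from $t$ to $T$ and using $Y_T=\xi$ would then give \eqref{e.5.5}; taking $\EE[\,\cdot\,|\mathcal{F}_t]$ afterward recovers the representation \eqref{e.1.6} required by Proposition \ref{Uniq}.

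To make this rigorous, I would introduce $\alpha_0^{t,\varepsilon,\eta}:=\exp\big(\int_0^t\dot{W}_{\varepsilon,\eta}(s,B_s)\,ds\big)$, which is $C^1$ in $t$ pathwise and satisfies $\tfrac{d}{dt}\alpha_0^{t,\varepsilon,\eta} = \alpha_0^{t,\varepsilon,\eta}\dot{W}_{\varepsilon,\eta}(t,B_t)$. Since $\alpha_0^{\cdot,\varepsilon,\eta}$ has bounded variation and no quadratic variation, the classical product rule applies to the $B$-semimartingale $Y$. Substituting the BSDE differential $dY_s = -Y_s W(ds,B_s) + Z_s\,dB_s$ into the resulting identity and rearranging, one obtains
\begin{align*}
\alpha_0^{T,\varepsilon,\eta}\xi - \alpha_0^{t,\varepsilon,\eta}Y_t = \int_t^T \alpha_0^{s,\varepsilon,\eta} Y_s\big[\dot{W}_{\varepsilon,\eta}(s,B_s)\,ds - W(ds,B_s)\big] + \int_t^T \alpha_0^{s,\varepsilon,\eta} Z_s\,dB_s.
\end{align*}

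It remains to pass to the limit $\varepsilon,\eta\downarrow 0$. The left-hand side converges to $\alpha_0^T\xi - \alpha_0^t Y_t$ in $L^2$ by Proposition \ref{exp int V} together with the uniform $L^p$-bounds on $\alpha^{\varepsilon,\eta}$ developed in the proof of Theorem \ref{sup Y}; the $dB$-integral converges to $\int_t^T \alpha_0^s Z_s\,dB_s$ by It\^o's isometry, using $Z\in\mathcal{M}^2_{\mathbb{F}}(0,T;\RR^d)$ and the same exponential estimates. The main obstacle --- and the technical heart of the argument --- is to show that the bracketed integral tends to $0$ in $L^2$. This is a weighted variant of Theorem \ref{yw converge}, and I expect its proof to carry over: expanding the integrand through the duality $F\cdot W(\phi)=\delta(F\phi)+\langle D^W F,\phi\rangle_{\mathcal{H}}$ with $F=\alpha_0^{s,\varepsilon,\eta}Y_s$ reduces the problem to controlling a Skorokhod integral plus a Malliavin trace, and the hypothesis $Y, D^BY\in\mathbb{D}^{1,2}$ (which in particular gives $Y\in\mathbb{D}^{1,2}_W$) combined with the exponential integrability of $\int_0^T W(ds,B_s)$ supplies exactly the $L^2$-regularity needed to run the dominated convergence argument of \eqref{second term 4}--\eqref{second term 7}.
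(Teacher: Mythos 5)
Your route (mollify $\alpha$ via $\dot W_{\varepsilon,\eta}$, apply a pathwise product rule, then kill the error term) is genuinely different from the paper's, which never regularizes $\alpha$ at this stage: the paper writes $\alpha_0^T\xi-\alpha_0^tY_t$ as a telescoping sum over a partition, substitutes the increments of $Y$ from \eqref{e1} and of $\alpha$ from \eqref{alexpr}, cancels the two $W$-terms at the discrete level, and then controls five remainder terms by means of the second-moment formula \eqref{F2split} together with increment estimates on $Y$, $Z$ and $\alpha$ obtained from the equation and the hypotheses $Y,D^BY\in\mathbb{D}^{1,2}$.

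The gap in your argument is the displayed identity itself, before any limit in $\varepsilon,\eta$ is taken. The step ``substituting the BSDE differential $dY_s=-Y_sW(ds,B_s)+Z_s\,dB_s$ into the resulting identity'' is not licensed by the classical product rule: $Y$ is not a semimartingale in the usual sense, since $\int_0^\cdot Y_sW(ds,B_s)$ is only defined as the Stratonovich-type limit of Definition \ref{def stra}, and the weighted integral $\int_t^T\alpha_0^{s,\varepsilon,\eta}Y_s\,W(ds,B_s)$ appearing in your identity has never been constructed. What the Riemann--Stieltjes integration by parts actually gives you (legitimately, since $\alpha_0^{\cdot,\varepsilon,\eta}$ is $C^1$ and $Y$ is continuous) is $\int_t^T\alpha_0^{s,\varepsilon,\eta}\,dY_s$ as a limit of Riemann sums $\sum_i\alpha_0^{t_i,\varepsilon,\eta}(Y_{t_{i+1}}-Y_{t_i})$; identifying that limit with $-\int_t^T\alpha_0^{s,\varepsilon,\eta}Y_s\,W(ds,B_s)+\int_t^T\alpha_0^{s,\varepsilon,\eta}Z_s\,dB_s$ and then comparing the first term with $\int_t^T\alpha_0^{s,\varepsilon,\eta}Y_s\dot W_{\varepsilon,\eta}(s,B_s)\,ds$ forces you through exactly the discrete increment analysis ($\EE|Y_r-Y_{t_i}|^2\lesssim|r-t_i|^{2H_0}+|r-t_i|$, etc.) that constitutes the paper's proof; your scheme does not avoid it, it only postpones it. A second, smaller issue: the error term is not ``a weighted variant of Theorem \ref{yw converge}'' in a routine sense, because the weight $\alpha_0^{s,\varepsilon,\eta}$ is a functional of the whole noise path up to time $s$, so the duality formula produces the additional Malliavin term $D^W\alpha_0^{s,\varepsilon,\eta}$, which must be estimated separately (the paper handles the analogous terms through $R_{2,i}$, $R_{4,i}$ and $R_{5,i}$). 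So the plan is salvageable, but as written the central identity is asserted rather than proved, and proving it requires the machinery you were hoping to bypass.
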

\begin{proof}
Let $(Y,Z)$ satisfy   \eqref{e1} and we use partition $\pi_n = \{t=t_0 <t_1 <\ldots<t_{n} =T\}$. Taking \eqref{alexpr} into account    we have
\begin{align}\label{e.3.47}
\alpha_0^T \xi -\alpha_0^t Y_t & = \sum_{i=1}^{n} \left(\alpha_0^{t_{i+1}} Y_{t_{i+1}} - \alpha_0^{t_{i}} Y_{t_{i}} \right)
=    {\sum_{i=1}^{n} \left(\alpha_0^{t_{i+1}} (Y_{t_{i+1}} -Y_{t_{i}}) + Y_{t_{i}}(\alpha_0^{t_{i+1}}- \alpha_0^{t_{i}} ) \right)}\notag\\
& = \sum_{i=1}^{n} \left(\alpha_0^{t_{i+1}} \big(-\int^{t_{i+1}}_{t_{i}} Y_r W(dr,B_r) + \int^{t_{i+1}}_{t_{i}}Z_r dB_r\big) \right)\notag \\
&\qquad\qquad\qquad\qquad\qquad\qquad\qquad+ \sum_{i=1}^{n} Y_{t_{i}} \int^{t_{i+1}}_{t_{i}}\alpha_0^r W(dr,B_r) \\
& = \sum_{i=1}^{n} \left(-\alpha_0^{t_{i }}   Y_{t_i} \int_{t_i}^{t_{i+1}}
 W(dr,B_r) + \alpha_0^{t_{i }}  Z_{t_i} \int^{t_{i+1}}_{t_{i}}  dB_r\big) \right)\notag \\
&\qquad\qquad\qquad\qquad\qquad\qquad\qquad+ \sum_{i=1}^{n} Y_{t_{i}}\alpha_0^{t_{i }}   \int^{t_{i+1}}_{t_{i}}  W(dr,B_r)+\tilde R_t^n  \notag\\
& = \sum_{i=1}^{n}   \alpha_0^{t_{i }}  Z_{t_i} \int^{t_{i+1}}_{t_{i}}  dB_r +\tilde R_t^n\,,\notag
\end{align}
where
\begin{equation}\label{tilR}
\begin{split}
\tilde R_t^n=&\sum_{i=1}^{n} \left(\alpha_0^{t_{i+1}}  \int^{t_{i+1}}_{t_{i}} \left[ Y_r -Y_{t_i}\right] W(dr,B_r)  \right)
+\sum_{i=1}^{n} \left( \alpha_0^{t_{i+1}}- \alpha_0^{t_{i }}\right)  Y_{t_i}  \int^{t_{i+1}}_{t_{i}}   W(dr,B_r)   \\
&+\sum_{i=1}^{n}  \alpha_0^{t_{i }}  \int^{t_{i+1}}_{t_{i}}\left[  Z_r-Z_{t_i}\right]  dB_r
+\sum_{i=1}^{n}  \left(\alpha_0^{t_{i+1}}- \alpha_0^{t_{i }}\right)  \int^{t_{i+1}}_{t_{i}}   Z_r   dB_r\\
&+ \sum_{i=1}^{n} Y_{t_{i}} \int^{t_{i+1}}_{t_{i}}\left[ \alpha_0^r - \alpha_0^{t_{i }}\right] W(dr,B_r)  \\
=& \sum_{i=1}^n R_{1,i} +R_{2,i} +R_{3,i} +R_{4,i} +R_{5,i}.
\end{split}
\end{equation}
For $R_{1,i}$, using  \eqref{F2split} we get
\begin{equation}\label{R1}
\begin{split}
|R_{1,i} |^2  &\lesssim \EE\left[\int^{t_{i+1}}_{t_{i}} \left[ Y_r -Y_{t_i}\right] W(dr,B_r) \right]^2\\
&=\EE\left[\int_{[t_{i},t_{i+1}]^2}(Y_r-Y_{t_i})  (Y_s-Y_{t_i})  |s-r|^{2H_0-2} q(B_r,B_s)  drds\right]\\
&\qquad+  \EE\Bigg[\int_{[t_{i},t_{i+1}]^2}\int_{[r,t_{i+1}]}\int_{[t_i,s]}\int_{\RR^{2d}}D_{r,y}^W (Y_u-Y_{t_i}) \, D_{v,w}^W (Y_s-Y_{t_i}) \\
&\qquad \qquad   \qquad \times |u-v|^{2H_0-2}|s-r|^{2H_0-2} q(B_u,w) q(B_s ,y)dw dy dv du drds\Bigg]\,.
\end{split}\end{equation}
Recalling \eqref{R split} that covariance $q(x,y)$ satisfies
\begin{equation}
\left| q(x, y) \right|
     \leq C \prod^d_{i=1} (1+|x_i|)^{2H_i-\beta_i}
     (1+|y_i|)^{2H_i-\beta_i} ,
\end{equation}
where $\beta_i>2H_i +1,i=1,\ldots,d$, it yields
\begin{align}\label{mesh1}
|R_{1,i} |^2  &\lesssim      \int_{t_i}^{t_{i+1}} \int_{t_i}^{t_{i+1}}\EE\Big[|Y_r -Y_{t_i}|^2 \Big]^{1/2} \EE\Big[|Y_s -Y_{t_i}|^2 \Big]^{1/2} |s-r|^{2H_0-2}\sup_{\om,s,r}\Big|q(B_s,B_r)\Big| dsdr \notag\\
&\qquad  + \int_{t_i}^{t_{i+1}} \int_{t_i}^{t_{i+1}} \int_{r}^{t_{i+1}} \int_{t_i}^{s} \EE\Big[|D_{r,y}^W(Y_r -Y_{t_i})|^2 \Big]^{1/2} \EE\Big[D_{v,w}^W(|Y_s -Y_{t_i})|^2 \Big]^{1/2} \\
&\qquad\qquad \qquad \times |u-v|^{2H_0-2}|s-r|^{2H_0-2}\sup_{\om,u,s,w,y}\Big|q(B_u,w)q(B_s,y)\Big| dvdudsdr. \notag
\end{align}
If $Y$ satisfies condition (3) in Theorem \ref{th1}, that is, the continuity coefficient of $Y$ is $\frac{1}{2}$, then we can directly obtain
\begin{align}
|R_{1,i} |^2   \lesssim    C \Big(|t_{i+1}-t_i|^{1+2H_0} + |t_{i+1}-t_i|^{4H_0}\Big)
\end{align}
If $Y$ satisfies condition (4) in Theorem \ref{th1},  then from \eqref{e1}, we have
\begin{equation}\label{e2}
  \begin{split}
  Y_r-Y_{t_i}= \int_r^{t_i} Y_s    W  (ds,B_s)  -\int_r^{t_i} Z_sdB_s,\quad  r\in [{t_i},t_{t+1}]\,.
  \end{split}
\end{equation}
With the help of \eqref{F2split} again and the fact that $(Y,Z)\in\mathcal{S}^2_{\mathbb{F} }(0,T;\RR) \times\mathcal{M} ^2_{\mathbb{F} }(0,T;\RR^d)$ as well as  that $Y$ also belongs  to $ \mathbb{D}^{1,2}$, it holds
\begin{align}
\EE\left|Y_r-Y_{t_i}\right|^2\leq& \,2\EE\left[\int_{[{t_i},r]^2}Y_s   Y_u |s-u|^{2H_0-2} q(B_s,B_u)  dsdu\right]\notag\\
&\qquad+  2\EE\Bigg[\int_{[{t_i},r]^2}\int_{[u,r]}\int_{[t_i,s]}\int_{\RR^{2d}}D_{s',y}^W Y_u \, D_{v,w}^W Y_s \notag\\
&\qquad \qquad    \times |u-v|^{2H_0-2}|s-s'|^{2H_0-2} q(B_u,w) q(B_s ,y)dw dy dudv ds'ds\Bigg]\,\notag\\
&\qquad + \EE\left|\int_r^{t_i} Z_sdB_s \right|^2\notag\\
\leq &\, 2\int_{[{t_i},r]^2}\EE\left[|Y_s|^2\right]^{1/2}   \EE\left[|Y_u|^2\right]^{1/2} |s-u|^{2H_0-2} \sup_{\om,s,u}\big|q(B_s,B_u)\big|  dsdu\\
&\qquad+  2 \int_{[{t_i},r]^2}\int_{[u,r]}\int_{[t_i,s]}\int_{\RR^{2d}}\EE\left[|D_{s',y}^W Y_u|^2\right]^{1/2} \EE\left[|D_{v,w}^W Y_s|^2\right]^{1/2} |u-v|^{2H_0-2} \notag\\
&\qquad \qquad   \times|s-s'|^{2H_0-2} \sup_{\om,s,u,w,y}\big|q(B_u,w)q(B_s ,y)\big|dw dy dv du ds'ds \,.\notag\\
 &\qquad+\int_r^{t_i} \EE\left|Z_s \right|^2 ds\notag\\
\leq &\,C( |r-t_i|^{2H_0} + |r-t_i|^{4H_0} + |r-t_i|).\notag
\end{align}
Taking this result back to \eqref{mesh1}   we get
\begin{align}
|R_{1,i} |^2  &\lesssim    C   \left(\int_{t_i}^{t_{i+1}} \int_{t_i}^{t_{i+1}}|r-t_i|^{1/2} |s-t_i|^{1/2}|s-r|^{2H_0-2}dsdr\right.\notag\\
&\qquad\qquad +\left.\int_{t_i}^{t_{i+1}} \int_{t_i}^{t_{i+1}} \int_{r}^{t_{i+1}} \int_{s}^{t_{i+1}} |u-v|^{2H_0-2}|s-r|^{2H_0-2}dudvdsdr\right).\notag\\
&\leq  C \left(|t_{i+1}-t_i|  \int_{t_i}^{t_{i+1}} \int_{t_i}^{t_{i+1}}|s-r|^{2H_0-2}dsdr\right.\\
&\qquad\qquad +\left.\int_{t_i}^{t_{i+1}} \int_{t_i}^{t_{i+1}} \int_{r}^{t_{i+1}} \int_{s}^{t_{i+1}} |u-v|^{2H_0-2}|s-r|^{2H_0-2}dudvdsdr\right).\notag\\
&\leq  C \Big(|t_{i+1}-t_i|^{1+2H_0} + |t_{i+1}-t_i|^{4H_0}\Big).\notag
\end{align}
Hence we have
\[
\sum_{i=0}^{n-1}  |R_{1,i} |\lesssim  \sum_{i=0}^{n-1} (t_{i+1}-t_i)^{{1/2}+ H_0}
  \leq  \max_{0\le i\le n-1} (t_{i+1}-t_i)^{{1/2}+ H_0-1}\sum_{i=0}^{n-1}  (t_{i+1}-t_i) \to 0\,,\ n\to \infty.
\]
Using  \eqref{F2split} again, we get
\begin{align}\label{R5}
|R_{5,i} |^2  &\lesssim \EE\left(\int_{t_i}^{t_{i+1} } ( \alpha_0^s -\al_0^{t_i}) W(ds,B_s)\right)^2\notag\\
 &=
\EE \left[\int_{t_i}^{t_{i+1} }\int_{t_i}^{t_{i+1} } ( \alpha_0^s -\al_0^{t_i})( \alpha_0^r -\al_0^{t_i})  |s-r|^{2H_0-2} q(B_s, B_r)   dsdr \right] \notag\\
&\qquad+\,\EE \bigg[ \int_{t_i}^{t_{i+1}} \int_{t_i}^{t_{i+1}}\int_{r}^{t_{i+1} } \int_{t_i}^{s } \int_{\RR^{2d}}D_{r,y}^W (\alpha_0^u-\alpha_0^{t_i}) \cdot D_{v,w}^W (\alpha_0^s-\alpha_0^{t_i}) \, \\
&\qquad\qquad\qquad\qquad\qquad \times |u-v|^{2H_0-2}|s-r|^{2H_0-2}q(B_u,w)q(B_s,y)dwdy dvdudrds\bigg]\notag\\
&:=R_{5,1,i}+ R_{5,2,i}.\notag
\end{align}
For $R_{5,2,i}$, recalling \eqref{e.3.43} we have
\begin{equation}\begin{split}
R_{5,2,i}  &=\EE \bigg[ \int_{t_i}^{t_{i+1}} \int_{t_i}^{t_{i+1}}\int_{r}^{t_{i+1} } \int_{t_i}^{s } \int_{\RR^{2d}} (\alpha_0^u-\alpha_0^{t_i}) \,\delta(B_r-y)\cdot  (\alpha_0^s-\alpha_0^{t_i})\,\delta(B_v-w)\,\\
 &\qquad\qquad\qquad \times |u-v|^{2H_0-2}|s-r|^{2H_0-2}  q(B_u,w)q(B_s,y)dwdy dvdudrds\bigg]\\
 &=\bigg[ \int_{t_i}^{t_{i+1}} \int_{t_i}^{t_{i+1}}\int_{r}^{t_{i+1} } \int_{t_i}^{s } (\alpha_0^u-\alpha_0^{t_i}) \, (\alpha_0^s-\alpha_0^{t_i})\,\\
 &\qquad\qquad\qquad \times |u-v|^{2H_0-2}|s-r|^{2H_0-2}  q(B_u,B_v)q(B_s,B_r) dvdudrds\bigg].
\end{split}\end{equation}
Taking this result back to \eqref{R5}, and with the help of \eqref{perI1}, \eqref{YI1}, we obtain
\begin{align}\label{G2split}
|R_{5,i} |^2  &\lesssim  \int^{t_{i+1}}_{t_{i}}\int^{t_{i+1}}_{t_{i}} \EE\left[(\alpha_0^r-\alpha_0^{t_i}) \,(\alpha_0^s-\alpha_0^{t_i}) \,  |s-r|^{2H_0-2} q(B_r,B_s)\right]ds dr\notag\\
&\qquad+\int^{t_{i+1}}_{t_{i}}\int^{t_{i+1}}_{t_{i}} \int^{t_{i+1}}_{r}\int_{t_i}^{s }\EE\left[ (\alpha_0^u-\alpha_0^{t_i})  \cdot  (\alpha_0^s -\alpha_0^{t_i})\,\right.\notag\\
 &\qquad\qquad\qquad\qquad\times|u-v|^{2H_0-2} |s-r|^{2H_0-2}q(B_u,B_v) \,  q(B_r,B_s)\Big]dudvds dr \notag\\
 &\leq C \int^{t_{i+1}}_{t_{i}}\int^{t_{i+1}}_{t_{i}} \EE \big[|\alpha_0^r -\alpha_0^{t_{i }}|^4\big]^{1/4}\EE \big[|\alpha_0^r -\alpha_0^{t_{i}}|^4\big]^{1/4}|r-s|^{2H_0-2}\EE \big[|q(B_r,B_s)|^2\big]^{1/2}dsdr\notag\\
 &\qquad +\int^{t_{i+1}}_{t_{i}}\int^{t_{i+1}}_{t_{i}} \int^{t_{i+1}}_{r}\int_{t_i}^{s } \EE \big[|\alpha_0^u -\alpha_0^{t_{i }}|^4\big]^{1/4}\EE \big[|\alpha_0^s -\alpha_0^{t_{i}}|^4\big]^{1/4}\notag\\
 &\qquad\qquad\qquad\qquad\times|r-s|^{2H_0-2}|u-v|^{2H_0-2}
 \EE\big[|q(B_u,B_v)q(B_r,B_s)|^2\big]^{1/2}dudvdsdr\\
 &\leq C \int^{t_{i+1}}_{t_{i}}\int^{t_{i+1}}_{t_{i}} (r-t_i)^{H_0}(s-t_i)^{H_0}(r-s)^{2H_0-2}dsdr\notag\\
 &\qquad + C \int^{t_{i+1}}_{t_{i}}\int^{t_{i+1}}_{t_{i}} \int^{t_{i+1}}_{r}\int_{t_i}^{s } (u-t_i)^{H_0}(s-t_i)^{H_0}|r-s|^{2H_0-2}|u-v|^{2H_0-2}dudvdsdr\notag\\
  &\leq C(t_{i+1}-t_i)^{2H_0}\int^{t_{i+1}}_{t_{i}}\int^{t_{i+1}}_{t_{i}} (r-s)^{2H_0-2}dsdr\notag\\
 &\qquad +  C (t_{i+1}-t_i)^{2H_0}\int^{t_{i+1}}_{t_{i}}\int^{t_{i+1}}_{t_{i}} \int^{t_{i+1}}_{r}\int_{t_i}^{s } |r-s|^{2H_0-2}|u-v|^{2H_0-2}dudvdsdr.\notag\\
&\leq C \big((t_{i+1}-t_i)^{4 H_0}+(t_{i+1}-t_i)^{6 H_0}\big).\notag
\end{align}
Thus
\[
\sum_{i=0}^{n-1}  |R_{5,i} |\lesssim  \sum_{i=0}^{n-1} (t_{i+1}-t_i)^{2H_0}
  \leq  \max_{0\le i\le n-1} (t_{i+1}-t_i)^{2H_0-1}\sum_{i=0}^{n-1}  (t_{i+1}-t_i) \to 0,\,\ n\to \infty.
\]
For $R_{3,i}$, from the orthogonality of the increments of standard Brownian motion and the fact that $\al_0^{t_i}=\exp\left\{\int_0^{t_i} {W} (dr,B_r)\right\}$ is $\cF_{t_i}$-adapted,  we have
\begin{equation}\label{Zconver}
\begin{split}
\EE\left[\left|\sum_{i=1}^{n} \alpha_0^{t_{i}} \int^{t_{i+1}}_{t_{i}}\left[  Z_r-Z_{t_i}\right]  dB_r\right|^2\right]\le &\sum_{i=1}^{n} \EE\left|\alpha_0^{t_{i }}\right|^2\EE \left|\int^{t_{i+1}}_{t_{i}}\left[  Z_r-Z_{t_i}\right]  dB_r\right|^2\\
\leq& C\sum_{i=1}^{n}\int^{t_{i+1}}_{t_{i}} \EE\left|  Z_r-Z_{t_i}\right|^2 dr.
\end{split}
\end{equation}
If $Z$ satisfies condition (3) in Theorem \ref{th1},   we have easily
\begin{align}
 \left|\sum^{n-1}_{i=0}R_{3,i}\right|^2 \lesssim C \sum^{n-1}_{i=0} |t_{i+1}-t_i|^{\kappa+1}.
\end{align}
Denote  $\tilde Y_t=D_r^BY_t ,\ \tilde Z_t=D_r^B Z_t $ (we fix $r$), and from \eqref{e1}  we   obtain
\begin{equation}\label{DBY1}
\begin{split}
 D_r^BY_t =\tilde Y_t
  =& D_r^B \xi + \int_t^T \tilde Y_s {{W}}(ds,B_s) \\
  &\qquad  +\int_r^T Y_s   \nabla_x {{W}} (ds, B_s)
  -\int_t^T \tilde Z_s  dB_s,\ 0\leq t\leq r\leq T.
  \end{split}
\end{equation}
Therefore, we first need to verify the square integrability of $\int_t^T Y_s \nabla_x {{W}} (ds, B_s)$, and then we can treat \eqref{DBY1} in a similar way to that for  \eqref{e1}.
We can write
\begin{equation}
\begin{split}
\EE\Big|\int_t^T Y_s \nabla_x {{W}} (ds, B_s)\Big|^2 &= \EE\Big|\int_t^T Y_s \nabla_x \delta(B_s-x){{W}} (ds, x)dx\Big|^2 \\
\end{split}
\end{equation}
From \eqref{F2split} and by integration  by parts, for all $0\leq t\leq T$,
\begin{align}\label{YnabW}
&\EE\Big|\int_t^T Y_s \nabla_x {{W}} (ds, B_s)\Big|^2\notag\\
=\,&\EE\left[\int_{[t,T]^2}Y_r  Y_s  |s-r|^{2H_0-2} \nabla_{x,y} q(B_r,B_s)dy drds\right]\notag\\
&\quad+\EE\left[\int_{[a,b]^2}\int_{\RR^{d}}\int_{[r,b]}\int_{[s,b]}\int_{\RR^{d}}D_{r,y}^W Y_u \, D_{v,w}^W Y_s \right.\notag\\
 &\quad\qquad\qquad\times |u-v|^{2H_0-2}|s-r|^{2H_0-2} \nabla_x q(B_u, w)\nabla_x q(B_s, y)dw dudv dy drds\bigg] \\
\leq & \int_{[t,T]^2} \EE\Big[\big|Y_r\big|^2\Big]^{1/2}\EE\Big[\big|Y_s\big|^2\Big]^{1/2}|s-r|^{2H_0-2} \sup_{\om,r,s}\big|\nabla_{x,y} q(B_r,B_s)\big|dy drds\notag\\
&\quad+\EE\left[\int_{[t,T]^2}\int_{\RR^{d}}\int_{[r,T]}\int_{[s,T]}\int_{\RR^{d}}
\EE\Big[\big|D_{r,y}^W Y_u\big|^2\Big]^{1/2} \, \EE\Big[\big|D_{v,w}^W Y_s\big|^2\Big]^{1/2} \right.\notag\\
&\quad\qquad\qquad\times |u-v|^{2H_0-2}|s-r|^{2H_0-2}\sup_{\om,u,s,w,y}\big| \nabla_x q(B_u, w)\nabla_x q(B_s, y)\big|dw dudv dy drds\bigg]\notag\\
\leq& C\big(|T-t|^{2H_0} + |T-t|^{4H_0})\leq C T^{4H_0}.\notag
\end{align}
Thus we have $(\tilde Y_t, \tilde Z_t )$ of BSDE \eqref{DBY1} is well-defined, i.e., $\displaystyle \EE\Big[\int_0^T |\tilde Y_t|^2 + |\tilde Z_t|^2 dt\Big]<\infty$. Using the classical conclusion that $Z_t = D^B_t Y_t,\,\forall t\in[0,T]$ (see e.g. \cite{hu2011malliavin}), we can treat $Z_r-Z_{t_i}$ as
\begin{equation}\label{Zr-Zt}
\begin{split}
Z_r-Z_{t_i} &=  (D_r^B \xi -D_{t_i}^B \xi) + \int_r^{t_i} D_r ^B Y_s {{W}}(ds,B_s) \\
  &\qquad  +\int_r^{t_i} Y_s \nabla_x {{W}} (ds, B_s)
  -\int_r^{t_i} D_r^B Z_s  dB_s,\ 0\leq t_i\leq r\leq T\\
  &=\tilde Z_1+\tilde Z_2+\tilde Z_3+\tilde Z_4\,.
\end{split}
\end{equation}
For the above first term $\tilde Z_1$
we can  use  the assumption   $\EE|D_r^B \xi -D_{t_i}^B \xi|^2 \leq C |r-t_i|^{\kappa}$
for some $\kappa>0  $.
We can deal with the second term $\tilde Z_2$ in \eqref{Zr-Zt} in the similar way as in \eqref{R1}. In fact, with the help of \eqref{F2split} again, it has
\begin{equation}
\begin{split}
\EE&\left| \tilde Z_2\right|^2\\
&\leq \int_{r}^{t_{i+1}} \int_{r}^{t_{i+1}}\EE\Big[|D_r^BY_s|^2 \Big]^{1/2} \EE\Big[|D_r^BY_{s'}|^2 \Big]^{1/2} |s-s'|^{2H_0-2}\sup_{\om,s,s'}\Big|q(B_s,B_s')\Big| dsds'  \\
&\qquad\qquad + \int_{r}^{t_{i+1}} \int_{r}^{t_{i+1}} \int_{s'}^{t_{i+1}} \int_{r}^{s } \left( \EE\Big[|D_{s',y}^W(D_r^BY_{s'} )|^2 \Big]^{1/2} \EE\Big[D_{v,w}^W(|D_r^BY_s  )|^2 \Big]^{1/2}\right. \\
&\qquad\qquad \qquad\qquad\times \left.|u-v|^{2H_0-2}|s-s'|^{2H_0-2}\sup_{\om,u,s,w,y}\Big|q(B_u,w)q(B_s,y)\Big|\right) dudvdsds'\,.
\end{split}
\end{equation}
Since $Y, D^B Y\in \mathbb{D}^{1,2}$, we have the estimate
\begin{equation}
\begin{split}
\EE \left| \tilde Z_2\right|^2&\leq
C \int_{r}^{t_{i+1}} \int_{r}^{t_{i+1}}  |s-s'|^{2H_0-2} dsds'  \\
&\quad  +C \int_{r}^{t_{i+1}} \int_{r}^{t_{i+1}} \int_{s'}^{t_{i+1}} \int_{r}^{s }  |u-v|^{2H_0-2}|s-s'|^{2H_0-2}  dudvdsds'\\
&\leq  C\left(|t_{i+1}-r|^{2H_0}+|t_{i+1}-r|^{4H_0}\right).
\end{split}
\end{equation}
From \eqref{YnabW} we have
\begin{equation}
\EE\Big|\tilde Z_3\Big|^2\leq C\left(|t_{i+1}-r|^{2H_0}+|t_{i+1}-r|^{4H_0}\right).
\end{equation}
Finally it is easy to obtain
\begin{equation}
\displaystyle\EE \big|\tilde Z_4|^2 \leq \sup_{s\in [r,t_i]} \EE |D^B_r Z_s|^2 \,|t_i-r|\leq C\, |t_i-r|.
\end{equation}
Taking those estimates back to \eqref{Zconver} we have
\begin{equation}\label{Zficonv}
\begin{split}
 \left(\sum_{i=1}^{n} \int^{t_{i+1}}_{t_{i}} \EE\left|  Z_r-Z_{t_i}\right|^2 dr\right)^{1/2}\leq
C  &\left(\sum_{i=1}^{n}\big(|t_{i+1}-t_i|^{2H_0+1}  +  |t_{i+1}-t_i|^{4H_0+1}\right.\\
 &+ |t_{i+1}-t_i|^{\kappa+1} + |t_{i+1}-t_i|^{2}\bigg)^{1/2},
\end{split}
\end{equation}
which implies
\begin{equation}
\begin{split}
\left|\sum_{i=1}^{n} R_{3,i}\right|^2\lesssim & \,C \sum_{i=1}^{n}\big(|t_{i+1}-t_i|^{2H_0+1}  +  |t_{i+1}-t_i|^{4H_0+1} \\
 &\ + |t_{i+1}-t_i|^{\kappa+1} + |t_{i+1}-t_i|^{2} \big)\\
\leq& \max_{0\leq i\leq n-1}(t_{i+1}-t_i)^{\kappa\wedge1} \sum_{i=1}^{n}|t_{i+1}-t_i| \rightarrow 0,\ n\rightarrow \infty.
 \end{split}
\end{equation}
For $R_{2,i}$ and $R_{4,i}$, it is easy to deduce that
\begin{equation}
\begin{split}
|R_{2,i}|^2 &\lesssim C\,\left(\EE \Big| Y_{t_i}\Big|^2\,\EE\left[\left| \alpha_0^{t_{i+1}}- \alpha_0^{t_{i }}\right|^4\right]^{1/2} \EE\left[\left| \int^{t_{i+1}}_{t_{i}}   W(dr,B_r)\right|^4\right]^{1/2}\right) \\
&\leq C\,|{t_{i+1}}-{t_{i}}|^{4H_0},
 \end{split}
\end{equation}
and
\begin{equation}
\begin{split}
|R_{4,i}|^2 &\lesssim C\EE\left[\left| \alpha_0^{t_{i+1}}- \alpha_0^{t_{i }}\right|^4\right]^{1/2} \EE\left[\left|  \int^{t_{i+1}}_{t_{i}} Z_r  dB_r\right|^2\right]\\
&\leq C\,|{t_{i+1}}-{t_{i}}|^{2H_0+1}.
 \end{split}
\end{equation}
Thus we have
\[
\sum_{i=0}^{n-1}  |R_{2,i} |\lesssim  \sum_{i=0}^{n-1} (t_{i+1}-t_i)^{2H_0}
  \leq  \max_{0\le i\le n-1} (t_{i+1}-t_i)^{2H_0-1}\sum_{i=0}^{n-1}  (t_{i+1}-t_i) \to 0,\,\ n\to \infty.
\]
and
\[
\sum_{i=0}^{n-1}  |R_{4,i} |\lesssim  \sum_{i=0}^{n-1} (t_{i+1}-t_i)^{H_0+1/2}
  \leq  \max_{0\le i\le n-1} (t_{i+1}-t_i)^{H_0-1/2}\sum_{i=0}^{n-1}  (t_{i+1}-t_i) \to 0,\,\ n\to \infty.
\]
Hence, letting  the mesh size $|\pi_n|$ goes to zero yields
$\tilde R_t^n\rightarrow 0,\ P\mbox{-a.s.,}$
and the right side of \eqref{e.3.47} converges to $\displaystyle \int_t^T\alpha_0^r Z_r dB_r.$
This concludes the proof of the lemma.
\end{proof}
\begin{proof}[Proof of Proposition \ref{Uniq}]
In equation \eqref{e.5.5} we take the conditional expectation with respect to $\mathcal{F}_t^B$  we see  to obtain
\[
\alpha_0^t Y_t=\EE \left[ \alpha_0^T\xi |\cF_t\right]\,.
\]
Thus,
\begin{equation}
Y_t = \left(\alpha_0^t\right)^{-1}\EE^B \left[\alpha_0^T \xi\big|\cF_t^B \right] = \EE^B \left[\alpha_t^T \xi\big|\cF_t^B \right]=\EE ^B\left[\xi   \exp\big(\int_t^T   {W}(dr,B_r)\big)\,\big|\,\mathcal{F}_t^B\right]\,.
\label{e.3.48}
\end{equation}
From the general relationship between $Z$ and $Y$ (e.g. \cite{hu2011malliavin}) we have
 \begin{equation}
Z_t=D_t^BY_t=D_t^B\EE^B \left[\xi   \exp\big(\int_t^T  {W}(dr,B_r)\big)\,\big|\,\mathcal{F}_t^B\right] \,.
 \end{equation}
 This concludes the proof of the proposition.
\end{proof}
\section{BSDEs and semilinear SPDEs}\label{s.5}
In this section we   obtain the regularity of the solution to the BSDE, and then establish the
relationship between the SPDE
\begin{equation}\label{inveranderson}
- d u(t,x)=\frac12 \Delta u(t,x) dt+u(t,x) W(dt, x),\ u(T, x) =  \phi(x)\,.
\end{equation}
and our BSDE
\begin{equation}\label{marBSDE}
Y_s^{t,x}= \phi(B_{T-t}^{t,x})+\int_s^T
Y_r^{t,x} W(dr, B_{r-t}^{t,x})  -\int_s^t
Z^{t,x}_r dB_r\,,\ s\in[t,T].
\end{equation}
\begin{thm}\label{SisB}
Suppose $\phi\in C^2(\RR^d)$. Let $\{u(t,x):t\in[0,T], x\in\RR^d\}$ be a random field such that $u(t,x)$ is $\cF_t$-measurable for each $(t,x)$, $u\in C([0,T]\times \RR^d, \RR)\, a.s.$  and let  $u(t,x)$ satisfy \eqref{inveranderson}.  Then $u(t,x)=Y_t^{t,x},\ \nabla u(t,x)=Z_t^{t,x}$, where $(Y_t^{t,x},Z_t^{t,x})$ is the solution of \eqref{marBSDE}.
\end{thm}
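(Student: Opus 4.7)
The plan is to establish this by a backward Feynman--Kac argument. I will show that $\tilde Y_s := u(s, B^{t,x}_{s-t})$ and $\tilde Z_s := \nabla u(s, B^{t,x}_{s-t})$, $s\in[t,T]$, form a solution pair for the BSDE \eqref{marBSDE} with terminal value $\phi(B^{t,x}_{T-t})$, and then invoke the uniqueness statement in Theorem \ref{th1}(3)-(4) to conclude $(\tilde Y, \tilde Z)=(Y^{t,x}, Z^{t,x})$. Evaluating at $s=t$ then gives immediately $u(t,x)=Y^{t,x}_t$ and $\nabla u(t,x)=Z^{t,x}_t$.

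Formally, applying the It\^o formula to $u(s, B^{t,x}_{s-t})$ and combining with the SPDE \eqref{inveranderson} cancels the $\tfrac12\Delta u$ contribution from It\^o against the Laplacian drift in the SPDE, and the surviving $-u\,W(ds,\cdot)$ term produces exactly the generator of \eqref{marBSDE}. To make this rigorous I would regularize the noise by $\dot W_{\varepsilon,\eta}$ from \eqref{nsp}: the smoothed backward equation is then a classical random parabolic PDE whose solution $u^{\varepsilon,\eta}$ is $C^{1,2}$ in $(s,x)$ thanks to $\phi\in C^2(\RR^d)$. Standard It\^o then yields that $(u^{\varepsilon,\eta}(s, B^{t,x}_{s-t}), \nabla u^{\varepsilon,\eta}(s, B^{t,x}_{s-t}))$ solves the approximated BSDE \eqref{e appro1} with terminal $\phi(B^{t,x}_{T-t})$, and classical uniqueness for \eqref{e appro1} identifies this pair with $(Y^{\varepsilon,\eta,t,x}_s, Z^{\varepsilon,\eta,t,x}_s)$.

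I would then pass to the limit as $\varepsilon, \eta\downarrow 0$. On the BSDE side, Lemma \ref{Y converge} together with Theorems \ref{sup Y}, \ref{z}, and \ref{yw converge} gives convergence of $(Y^{\varepsilon,\eta,t,x}, Z^{\varepsilon,\eta,t,x})$ to $(Y^{t,x}, Z^{t,x})$ in the appropriate $L^p$- and $\mathcal{M}^2$-topologies. On the SPDE side, the continuity assumption on $u$, combined with a linear stability estimate for \eqref{inveranderson} under noise smoothing, gives $u^{\varepsilon,\eta}\to u$ and $\nabla u^{\varepsilon,\eta}\to \nabla u$ along the Brownian paths. Matching the two limits produces the identifications $u(s, B^{t,x}_{s-t}) = Y^{t,x}_s$ and $\nabla u(s, B^{t,x}_{s-t}) = Z^{t,x}_s$ for every $s\in[t,T]$, and specialization at $s=t$ delivers the theorem.

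The main obstacle will be the rigorous justification of the nonlinear Stratonovich integral limit, namely showing that $\int_s^T u^{\varepsilon,\eta}(r, B^{t,x}_{r-t})\,\dot W_{\varepsilon,\eta}(r, B^{t,x}_{r-t})\,dr$ converges in $L^2$ to $\int_s^T u(r, B^{t,x}_{r-t})\,W(dr, B^{t,x}_{r-t})$ in the sense of Definition \ref{def stra}. This essentially parallels the argument of Theorem \ref{yw converge}, but with the adapted process $Y_s^{\varepsilon,\eta}$ now replaced by the random field $u^{\varepsilon,\eta}(s, B^{t,x}_{s-t})$; the required inputs are uniform $L^p$-bounds and $D^W$-Malliavin regularity of this random field, along with quantitative control of $u^{\varepsilon,\eta}-u$ strong enough to survive pairing with the rough kernel $\dot W_{\varepsilon,\eta}$. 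One must also verify that $(\tilde Y, \tilde Z)$ meets the regularity hypotheses of Theorem \ref{th1}(3) or (4), which follows from the $C^2$ regularity of $\phi$ together with the parabolic smoothing of \eqref{inveranderson} (providing the requisite $\mathbb{D}^{1,2}$ and H\"older bounds); uniqueness then closes the argument.
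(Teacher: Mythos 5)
Your proposal is correct and follows essentially the same route as the paper: regularize the noise by $\dot W_{\varepsilon,\eta}$, apply It\^o's formula to the smooth solution $u^{\varepsilon,\eta}$ along $X^{t,x}$, identify $(u^{\varepsilon,\eta}(s,X_s^{t,x}),\nabla u^{\varepsilon,\eta}(s,X_s^{t,x}))$ with the approximated BSDE solution by classical uniqueness, and pass to the limit using the convergence results of Sections 3 (in particular the analogue of Theorem \ref{yw converge} for the Stratonovich term). The only cosmetic difference is that the paper obtains $u^{\varepsilon,\eta}\to u$ concretely from the classical Feynman--Kac representation \eqref{uep} together with the weak formulation \eqref{weakep}--\eqref{weak}, and identifies the two limits directly, rather than checking the regularity hypotheses of Theorem \ref{th1}(3)--(4) for $(u(s,B^{t,x}_{s-t}),\nabla u(s,B^{t,x}_{s-t}))$ and invoking uniqueness of the limiting BSDE as you propose.
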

\begin{proof}
It suffices to show that $(u(s,X_s^{t,x}), \nabla u(s,X_s^{t,x}))_{s\in[t,T]}$ solves BSDE \eqref{marBSDE}.   Since $W(t,x)$
is not differentiable in $t$ and $x$,  one could not apply It\^{o}'s formula to $u(s,X_s^{t,x})$. Let us consider
\begin{equation}\label{apSPDE}
-d u^{\ep,\eta}(t,x)=\frac12 \Delta u^{\ep,\eta}(t,x) dt+u^{\ep,\eta}(t,x) \dot{W}_{\ep,\eta}(dt, x),\ u(T, x) =  \phi(x)\,.
\end{equation}
Recall \eqref{nsp} we have
$${\dot{W}}_{\varepsilon,\eta}(s,x) = \int_0^s \int_{R^d} \varphi_\eta (s-r) p_\varepsilon (x -y) W(dr,y)dy\,. $$
We see that
$u(t,x)$ is differentiable with respect to $x$. Now  we can use It\^{o}'s formula to $u^{\ep,\eta}(s,X_s^{t,x})$ to  deduce
\begin{equation}\label{inverapSPDE}
u^{\ep,\eta}(t,X_s^{t,x})=\phi(X_T^{t,x}) + \int_s^T u^{\ep,\eta}(r,X_r^{t,x}){\dot{W}}_{\varepsilon,\eta}(r,X_r^{t,x})dr - \int_s^T \nabla u^{\ep,\eta}(r,X_r^{t,x}) dB_r.
\end{equation}
Note that $X_r^{t,x} = x +B_r-B_t = B^x_{r-t}$, and by the uniqueness of BSDE 
we know
 $  Y_s^{t,x,\ep,\eta}=u^{\ep,\eta}(s,X_s^{t,x}),\  Z_s^{t,x,\ep,\eta}
 =\nabla u^{\ep,\eta}(s,X_s^{t,x})$
 satisfy
 \begin{equation}\label{apmarBSDE}
Y_s^{t,x,\ep,\eta}= \phi(B_{T-t}^{t,x})+\int_s^T
Y_r^{t,x,\ep,\eta} \dot{W}(r, B_{r-t}^{t,x})dr  -\int_s^t
Z^{t,x,\ep,\eta}_r dB_r\,,\ s\in[t,T].
\end{equation}

Theorem \ref{th1}  yields
$$Y_s=\lim_{\ep,\eta\rightarrow 0}u^{\ep,\eta}(s,X_s^{t,x})= \lim_{\ep,\eta\rightarrow 0} Y_s^{t,x,\ep,\eta} ,\qquad
Z_s=\lim_{\ep,\eta\rightarrow 0}\nabla u^{\ep,\eta}(s,X_s^{t,x})= \lim_{\ep,\eta\rightarrow 0} Z_s^{t,x,\ep,\eta}  $$
  is a solution pair of BSDE \eqref{marBSDE}.
It remains  to show SPDE \eqref{apSPDE} converges to \eqref{inveranderson}.
  From the classical Feynman-Kac formula
  it follows
  \begin{equation}\label{uep}
u^{\ep,\eta}(t,x) = \EE^B\left[\phi(X_T^{t,x})\exp\Big\{\int_t^T \dot{W}^{\ep,\eta}(r,X_r^{t,x})dr\Big\}\right]= \EE^B\left[\phi(B_{T-t}^{x})\exp\Big\{\int_t^T \dot{W}^{\ep,\eta}(r,(B_{T-t}^{x}))dr\Big\}\right].
\end{equation}
Define
\begin{equation}\label{FKu}
u(t,x)=\EE\left[\phi(B_{T-t}^{x})\exp\Big\{\int_t^T {W}(dr,(B_{T-t}^{x}))\Big\}\right]\,.
\end{equation}
Similar to  the proof of    Lemma \ref{Y converge},  we  can deduce
\begin{equation}\label{uconv}
 \lim_{\ep,\eta\rightarrow 0}\EE^W \big|u^{\ep,\eta}(t,x)-u(t,x)\big|^p =0 \ \ \mbox{for all}\ p\geq 2.
\end{equation}
Since $u^{\ep,\eta}$ satisfies \eqref{apSPDE}  for any $C^{\infty}$ function $\psi$ with compact support, we have
\begin{align}
\int_{\mathbb{R}^{d}} u^{\ep,\eta}(t, x) \psi(x) d x=& \int_{\mathbb{R}^{d}} \phi(x) \psi(x) d x+\frac{1}{2} \int_{t}^{T} \int_{\mathbb{R}^{d}} u^{\ep,\eta}(s, x) \Delta \psi(x) d x d s \notag\\
&+\int_{t}^{T} \int_{\mathbb{R}^{d}} u^{\ep,\eta}(s, x) \psi(x) \dot{W}_{\ep,\eta}(s,  x)dsdx.\label{weakep}
\end{align}
Letting $\varepsilon, \eta\to 0$ will yield
\begin{align}
\int_{\mathbb{R}^{d}} u(t, x) \psi(x) d x=& \int_{\mathbb{R}^{d}} \phi(x) \psi(x) d x+\frac{1}{2} \int_{t}^{T} \int_{\mathbb{R}^{d}} u(s, x) \Delta \psi(x) d x d s\notag \\
&+\int_{t}^{T} \int_{\mathbb{R}^{d}} u(s, x) \psi(x) {W}(ds,  x)dx,\label{weak}
\end{align}
since
\begin{equation} \lim_{\ep,\eta\rightarrow 0} \int_{t}^{T} \int_{\mathbb{R}^{d}} u^{\ep,\eta}(s, x) \psi(x) \dot{W}_{\ep,\eta}(s,  x)dsdx = \int_{t}^{T} \int_{\mathbb{R}^{d}} u(s, x) \psi(x) {W}(ds,  x)dx.
\label{e.5.11}
\end{equation}
In fact,    \eqref{e.5.11} can be deduced in  a similar way
 to   that of  Theorem \ref{yw converge}. This proves the conclusion.
\end{proof}
\begin{thm}
Suppose the same conditions as in Theorem \ref{th1} and let $(Y_s^{t,x},Z_s^{t,x})$ be  the solution pair of BSDE \eqref{marBSDE}.
Then  $u(t,x) :=Y_t^{t,x}, t\in[0,T],x\in\RR^d$  is in  $  C([0,T]\times\RR^d,\RR)$ and   is the solution of SPDE \eqref{inveranderson}.
\end{thm}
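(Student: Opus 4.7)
The plan is to identify $u(t,x) = Y_t^{t,x}$ with the Feynman--Kac functional \eqref{FKu} already analyzed in the proof of Theorem \ref{SisB}, then transfer continuity and the SPDE from that functional back to $Y^{t,x}$. By Theorem \ref{th1}(1), applied to the BSDE \eqref{marBSDE} with driving Brownian motion $B_{\cdot-t}^{t,x}$ starting at $x$ at time $t$, the first component has the explicit representation
\begin{equation*}
Y_s^{t,x} = \EE\Big[\phi(B_{T-t}^{t,x}) \exp\Big(\int_s^T W(dr,B_{r-t}^{t,x})\Big)\,\Big|\,\cF_s\Big], \qquad s\in[t,T].
\end{equation*}
Setting $s=t$ and using that the future increments $B_r-B_t$ ($r\ge t$) are independent of $\cF_t^B$ while $W(\cdot,\cdot)$ is $\cF_t$-measurable on all of $\RR_+\times\RR^d$, the conditional expectation collapses to an expectation in $B$ only, giving
\begin{equation*}
u(t,x) := Y_t^{t,x} = \EE^B\Big[\phi(B_{T-t}^{x})\exp\Big(\int_t^T W(dr,B_{r-t}^{x})\Big)\Big].
\end{equation*}
This is precisely the object $u(t,x)$ in \eqref{FKu} from the preceding proof.

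Next I would establish joint continuity of $u$ on $[0,T]\times\RR^d$. The Hölder continuity of $Y_t^{t,x}$ in $t$ at fixed $x$ follows from Theorem \ref{th1}(2) (with $a>1$ and exponent $1/2$). For continuity in $x$, I would work directly on the explicit formula: the inner random variable depends smoothly on $x$ through translations of $B$, and Proposition \ref{exp int V} combined with \eqref{estrhoR} provides a dominating integrable envelope that depends continuously on $x$, so dominated convergence yields $L^p$-continuity in $x$; Kolmogorov's continuity criterion then upgrades this to almost sure joint continuity after combining with the $t$-estimate.

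For the SPDE, I would exploit the smooth approximation \eqref{apSPDE} already set up in the proof of Theorem \ref{SisB}. For fixed $\varepsilon,\eta>0$, the classical Feynman--Kac formula \eqref{uep} gives a smooth $u^{\varepsilon,\eta}$ solving \eqref{apSPDE}, which therefore satisfies the weak formulation \eqref{weakep} against any test function $\psi\in C_c^\infty(\RR^d)$. The convergence \eqref{uconv} (modeled on Lemma \ref{Y converge}) together with the convergence of the stochastic integral term \eqref{e.5.11} (argued exactly as in Theorem \ref{yw converge}) lets me pass to the limit in \eqref{weakep} and obtain \eqref{weak} for the limit $u$. Since this $u$ coincides with $Y_t^{t,x}$ by the identification above, we conclude that $u(t,x)=Y_t^{t,x}$ solves \eqref{inveranderson} in the distributional/weak sense employed in this paper.

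The main obstacle is twofold. First, establishing the joint $(t,x)$-continuity: the Hölder estimates in Theorem \ref{th1}(2) are stated pointwise in $x$, and care is needed to obtain uniform bounds as $x$ varies so that Kolmogorov's criterion applies; the spatial decay built into $\rho$ and the estimate \eqref{estrhoR} are the key inputs. Second, and more delicate, is justifying the passage to the limit $\varepsilon,\eta\to 0$ in the stochastic integral $\int_t^T\!\int_{\RR^d} u^{\varepsilon,\eta}(s,x)\psi(x)\dot W_{\varepsilon,\eta}(s,x)\,ds\,dx$ with an $x$-dependent (rather than random-path-dependent) test function. This requires an adaptation of Theorem \ref{yw converge}: split the product via $F\cdot W(\phi)=\delta(F\phi)+\langle D^W F,\phi\rangle_\cH$, bound the Skorokhod term using the $L^2$-isometry and the uniform $\bbD^{1,2}_W$-bound on $u^{\varepsilon,\eta}(s,\cdot)$, and handle the trace term by dominated convergence using the spatial covariance estimate \eqref{R split}. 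The compact support of $\psi$ makes the integrals over $\RR^d$ finite, and Fernique-type bounds control the resulting exponentials.
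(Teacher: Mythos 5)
Your proposal is correct in substance but follows a genuinely different route from the paper's own proof of this theorem. You identify $u(t,x)=Y_t^{t,x}$ with the Feynman--Kac functional \eqref{FKu} via the explicit representation from Theorem \ref{th1}(1), and then transfer the SPDE from the approximating classical solutions $u^{\ep,\eta}$ of \eqref{apSPDE} by passing to the limit in the weak formulation \eqref{weakep}$\to$\eqref{weak}; that limit passage is indeed already carried out (self-containedly, without using the hypothesis that a solution of \eqref{inveranderson} exists) inside the proof of Theorem \ref{SisB}, so there is no circularity, and your argument delivers $u$ as a \emph{weak} solution. The paper instead works directly on the BSDE side: it applies It\^o's formula to $u^{\ep,\eta}(t+h,X_s^{t,x})$, combines it with the approximated BSDE \eqref{apmarBSDE} over a partition of $[t,T]$, uses the identity $Z_t^{\ep,\eta,t,x}=\nabla Y_t^{\ep,\eta,t,x}$ to cancel the martingale terms, and then invokes Duhamel's principle and Theorem \ref{yw converge} to conclude that $u$ is a \emph{mild} solution. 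Your route is shorter and reuses existing machinery; the paper's route is more self-contained for this direction and produces the mild formulation explicitly. Two points where your sketch is thinner than it should be: (i) the joint continuity claim needs the flow decomposition $u(t+h,x)-u(t,x)=\bigl(Y_{t+h}^{t+h,x}-Y_{t+h}^{t,x}\bigr)+\bigl(Y_{t+h}^{t,x}-Y_t^{t,x}\bigr)$, of which only the second increment is controlled by Theorem \ref{th1}(2), so the dependence on the initial time/point must be estimated separately before Kolmogorov's criterion applies (the paper is equally silent on this); (ii) the limit \eqref{e.5.11} with a deterministic spatial test function $\psi$ is not literally Theorem \ref{yw converge} (which integrates along the random path $B_s$), so the adaptation you outline via the Skorohod/trace decomposition does need to be written out.
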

\begin{proof}
Notice that $u\left(t+h, X_{t+h}^{t, x}\right)=Y_{t+h}^{t+h, X_{t+h}^{t,x}}=Y_{t+h}^{t, x}$. We still use the approximated   BSDE \eqref{apmarBSDE}. Define $u^{\ep,\eta}(t,x) :=Y_t^{\ep,\eta,t,x}, t\in[0,T],x\in\RR^d$.  We want to show that
$u^{\ep,\eta}(t,x)  $  satisfies \eqref{inveranderson}.
An application of   It\^o's formula yields  that for $h>0$
\begin{equation}\label{part}
\begin{aligned}
  u^{\ep,\eta}\left(t+h, X_{t}^{t, x}\right)-u^{\ep,\eta}\left(t+h, X_{t+h}^{t, x}\right)
=& -\int_{t}^{t+h} \frac{1}{2}\Delta u^{\ep,\eta}\left(t+h, X_{s}^{t, x}\right) d s\\
&\quad -\int_{t}^{t+h} \nabla u^{\ep,\eta}\left(t+h, X_{s}^{t, x}\right)  d B_{s}  \,.
\end{aligned}
\end{equation}
Combining this with the backward SDE satisfied by $u^{\ep,\eta}(t,x) :=Y_t^{\ep,\eta,t,x}, t\in[0,T],x\in\RR^d$ we have
\begin{equation}\label{part}
\begin{aligned}
u^{\ep,\eta}(t+h, x)-&u^{\ep,\eta}(t, x)=  u^{\ep,\eta}\left(t+h, X_{t}^{t, x}\right)-u^{\ep,\eta}\left(t+h, X_{t+h}^{t, x}\right)+u^{\ep,\eta}\left(t+h, X_{t+h}^{t, x}\right)-u^{\ep,\eta}(t, x) \\
=&-\int_{t}^{t+h} \frac{1}{2}\Delta u^{\ep,\eta}\left(t+h, X_{s}^{t, x}\right) d s-\int_{t}^{t+h} \nabla u^{\ep,\eta}\left(t+h, X_{s}^{t, x}\right)  d B_{s} \\
&-\int_{t}^{t+h}  Y_{s}^{\ep,\eta,t, x} \dot{W}_{\ep,\eta}(s,X_{s}^{t, x})ds+\int_{t}^{t+h} Z_{s}^{\ep,\eta ,t, x} d B_{s}.
\end{aligned}
\end{equation}
Thus, let $\pi_{n}$ be a partition $t=t_{0}<t_{1}<\cdots<t_{n}=T .$ By \eqref{part}, we have
\begin{equation}
\begin{aligned}
\phi(x)-u^{\ep,\eta}(t, x)=&-\sum_{i=0}^{n-1} \int_{t_{i}}^{t_{i+1}}\frac{1}{2}\Delta u^{\ep,\eta}\left(t_{i}, X_{s}^{t_i, x}\right) d s-\sum_{i=0}^{n-1} \int_{t_{i}}^{t_{i+1}} u^{\ep,\eta}(s, X_{s}^{t_i, x}) \dot{W}_{\ep,\eta}\left( s, X_{s}^{t_i, x}\right)ds\\
&+\sum_{i=0}^{n-1} \int_{t_{i}}^{t_{i+1}}\left[Z_{s}^{\ep,\eta ,t_i, x}-\nabla u^{\ep,\eta}\left(t_{i}, X_{s}^{t_i, x}\right) \right] d B_{s}. \,.
\end{aligned}
\end{equation}
On the other hand, it is elementary to show that random field $\{ Z_{s}^{\ep,\eta,t, x}, t\leq s\leq T\}$ has a continuous version (e.g., \cite[Proposition 5.2]{SSZ2019})   such that
\begin{equation}
Z_s^{\ep,\eta,t, x} = D_s^BY_s^{\ep,\eta,t, x} =\nabla Y_s^{\ep,\eta,t, x}(\nabla X_s^{t, x})^{-1},
\end{equation}
and in particular, $Z_t^{\ep,\eta,t, x} = \nabla Y_t^{\ep,\eta,t, x}$.
Thus, if we let mesh sizes of the partitions $\pi_{n}$ go to zero, then it yields
\begin{equation}
\begin{aligned}
\phi(x)-u^{\ep,\eta}(t, x)=&-   \int_{0}^{t }\frac{1}{2}\Delta u^{\ep,\eta}\left(s, x\right) d s-  \int_{0}^{t } u^{\ep,\eta}(s, x) \dot{W}_{\ep,\eta}\left(s, x\right)ds \,,
\end{aligned}
\end{equation}
or by Duhamel's principle we obtain
\begin{equation}
\begin{aligned}
\phi(x)-u^{\ep,\eta}(t, x)=&-     \int_{0}^{t }\int_{\RR^d}
p_{t-s}(x-y)  u^{\ep,\eta}(s, y)\dot{W}_{\ep,\eta}\left(s, x\right)dyds \,.
\end{aligned}
\end{equation}
 From Theorem \ref{yw converge},  letting $\ep, \eta\to 0$   we get
\begin{equation}
\begin{aligned}
\phi(x)-u^{ }(t, x)=&-     \int_{0}^{t }\int_{\RR^d}
p_{t-s}(x-y)  u^{ }(s, y) W\left(d s, y \right) dy\,.
\end{aligned}
\end{equation}
%
The above formula means that $u(t,x):= Y_t^{t,x}$ of BSDE \eqref{marBSDE} is a mild solution of  SPDE \eqref{inveranderson}.
\end{proof}

\section{Appendix}
\begin{prop}Let $Y$ be a process such that its Malliavin derivative exists and
 assume that $D_{s,y} ^WY$ is integrable with respect to $s$.  Then
\begin{equation}\begin{split}\label{F2split}
\EE\left(\int_a^b Y_s W(ds,B_s)\right)^2 &= \EE\left[\int_{[a,b]^2}Y_r  Y_s  |s-r|^{2H_0-2} q(B_r,B_s)  drds\right]\\
&\quad+  \EE\Bigg[\int_{[a,b]^2}\int_{[r,b]}\int_{[a,s]}\int_{\RR^{2d}}D_{r,y}^W Y_u \, D_{v,w}^W Y_s |u-v|^{2H_0-2} \\
&\qquad \qquad   \qquad  |s-r|^{2H_0-2} q(B_u,w) q(B_s ,y)dw dydv du drds\Bigg]\,.
\end{split}\end{equation}
\end{prop}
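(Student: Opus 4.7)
The identity \eqref{F2split} is the $L^2$-isometry for the divergence (Skorohod) integral of the $\mathcal{H}$-valued process $u(s, y) := Y_s\, \delta(B_s - y) \mathbf{1}_{[a, b]}(s)$. Since $u$ has a distributional component, the plan is to mollify, apply the standard divergence isometry to the regular approximation, and pass to the limit. Specifically, setting
\[
u^{\vare, \eta}(r, y) := \int_a^b Y_s\, \varphi_\eta(s - r)\, p_\vare(B_s - y)\, ds,
\]
the Wick formula $F \cdot W(\phi) = \delta(F\phi) + \langle D^W F, \phi\rangle_\mathcal{H}$ (applied pointwise in $s$ and combined with stochastic Fubini) gives
\[
\int_a^b Y_s \dot W_{\vare, \eta}(s, B_s)\, ds = \delta(u^{\vare, \eta}) + T^{\vare, \eta},
\]
where $T^{\vare, \eta} := \int_a^b \langle D^W Y_s, \varphi_\eta(s - \cdot)\, p_\vare(B_s - \cdot)\rangle_\mathcal{H}\, ds$ is the trace correction. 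The left-hand side converges in $L^2(\Omega)$ to $\int_a^b Y_s W(ds, B_s)$ by Definition \ref{def stra} and the approximation arguments already developed in Theorem \ref{yw converge}.

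Applying the standard divergence $L^2$-isometry to $u^{\vare, \eta}$ yields
\[
\EE[\delta(u^{\vare, \eta})^2] = \EE\|u^{\vare, \eta}\|^2_\mathcal{H} + \EE\langle Du^{\vare, \eta}, (Du^{\vare, \eta})^*\rangle_{\mathcal{H} \otimes \mathcal{H}}.
\]
Using the $\mathcal{H}$-kernel $\mathcal{K}((s, y), (r, z)) = \alpha_{H_0}|s - r|^{2H_0 - 2}q(y, z)$ and the identity $D_{v, w}^W u^{\vare, \eta}(r, y) = \int_a^b D_{v, w}^W Y_s\, \varphi_\eta(s - r)\, p_\vare(B_s - y)\, ds$ (valid because $B$ is independent of $W$), both terms on the right become multiple integrals whose mollifiers $\varphi_\eta$ and $p_\vare$ concentrate on the diagonals $r = s$ and $y = B_s$. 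A dominated-convergence argument then identifies the limits as precisely the two integrals on the right-hand side of \eqref{F2split}; the domain restrictions $u \in [r, b]$ and $v \in [a, s]$ come from the $\mathbb{F}$-adaptedness of $Y$, so that $D_{r, y}^W Y_u$ vanishes when $u < r$.

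The main obstacle is handling the full expansion $\EE[(\delta(u^{\vare, \eta}) + T^{\vare, \eta})^2] = \EE[\delta(u^{\vare, \eta})^2] + 2\EE[\delta(u^{\vare, \eta}) T^{\vare, \eta}] + \EE[(T^{\vare, \eta})^2]$, since one must show that the cross term and the quadratic trace term are either absorbed into or vanish against the two displayed limits. Under the adaptedness hypothesis and the $s$-integrability of $D_{s, y}^W Y$ assumed in the statement, the trace $T^{\vare, \eta}$ only picks up contributions from $r$ close to $s$; a second Wick expansion of $\delta(u^{\vare, \eta}) T^{\vare, \eta}$ together with bilinearity of $\langle \cdot, \cdot\rangle_\mathcal{H}$ shows that these leftover contributions match either the first or second term of \eqref{F2split} after passing to the limit, by the same dominated-convergence estimates (cf.\ \eqref{first term 2} and \eqref{second term 4}) used throughout the proof of Theorem \ref{yw converge}. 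Passing to the limit $\vare, \eta \downarrow 0$ in the isometry equality then produces \eqref{F2split}.
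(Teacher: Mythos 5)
Your overall strategy differs from the paper's: the paper does not mollify at all. It sets $F=\int_a^bY_s\,W(ds,B_s)$, identifies the second copy of $F$ with the divergence $\delta\bigl(Y_\cdot\,\delta(B_\cdot-\cdot)\bigr)$, and applies the duality $\EE[F\,\delta(u)]=\EE\langle D^WF,u\rangle_{\mathcal H}$ together with the product rule for $D^WF$; a second application of the same duality inside the resulting expression produces the second displayed integral. Your identification of the limits of $\EE\|u^{\vare,\eta}\|_{\mathcal H}^2$ and of $\EE\langle D^Wu^{\vare,\eta},(D^Wu^{\vare,\eta})^*\rangle_{\mathcal H\otimes\mathcal H}$ with the two terms of \eqref{F2split} is correct: the right-hand side of \eqref{F2split} is exactly the Skorohod isometry for $u(s,y)=Y_s\,\delta(B_s-y)\mathbf{1}_{[a,b]}(s)$, including the domain restrictions coming from adaptedness.

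The gap is in your last paragraph. Because you define the left-hand side as the Stratonovich limit $\delta(u^{\vare,\eta})+T^{\vare,\eta}$, you must account for $2\,\EE[\delta(u^{\vare,\eta})\,T^{\vare,\eta}]+\EE[(T^{\vare,\eta})^2]$, and the assertion that these contributions ``match either the first or second term of \eqref{F2split}'' is unsubstantiated and false in general. As $\vare,\eta\downarrow0$ the trace converges to $T=\int_a^b\int_{0}^{T}\int_{\RR^d} D^W_{r,y}Y_s\,|s-r|^{2H_0-2}q(y,B_s)\,dy\,dr\,ds$, which does not vanish (take $Y_s=\al_0^s$, for which $T=\int_a^b\al_0^s\int_0^s|s-r|^{2H_0-2}q(B_r,B_s)\,dr\,ds$). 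Then $\EE[T^2]$ carries the kernel $|s-r|^{2H_0-2}q(y,B_s)\,|s'-r'|^{2H_0-2}q(y',B_{s'})$ against $D^W_{r,y}Y_s\,D^W_{r',y'}Y_{s'}$, i.e.\ each derivative variable is contracted with the diagonal point of its \emph{own} factor, whereas the second term of \eqref{F2split} contracts the derivative variable $(r,y)$ of $D^W_{r,y}Y_u$ with the base point $(s,B_s)$ of the \emph{other} factor; these are different quadratic forms and neither reduces to the other. Likewise the cross term, after one further integration by parts, involves second derivatives $D^WD^WY$, which appear nowhere in \eqref{F2split}. The identity you are proving is the isometry for the divergence alone; it holds under the reading of $\int_a^bY_s\,W(ds,B_s)$ as $\delta\bigl(Y_\cdot\delta(B_\cdot-\cdot)\bigr)$ (which is how the paper's first display implicitly treats it), so you must either adopt that interpretation and remove $T^{\vare,\eta}$ from your decomposition, or actually prove that the trace contributions vanish for the processes at hand --- which they do not. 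As written, the final step does not close.
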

\begin{proof}
Recalling $W(\phi)=\int_{\RR_+\times \RR^d} \phi(t, x) W(dt, x)dx$. We have
\begin{equation}\begin{split}
\EE\left(\int_a^b Y_s W(ds,B_s)\right)^2 = \EE\left(\int_a^b\int_{\RR^d} Y_s\delta(B_s-x) W(ds,x)dx\right)^2.
\end{split}\end{equation}
Denote by $F:= \int_a^b Y_s W(ds,B_s)$ and  we shall use  $F \cdot {W}(\phi)=\delta(F\phi)+\langle D^W F, \phi\rangle_{\mathcal{H}} $. From the definition of spatial covariance
\eqref{e.1.7}, it follows
\begin{equation}\begin{split}\label{F2split1}
\EE\left(\int_a^b Y_s W(ds,B_s)\right)^2 &= \EE\left(F\cdot\int_a^b\int_{\RR^d} Y_s\delta(B_s-x) W(ds,x)dx\right)=\EE\left(\big\langle D^W F,\, Y_\cdot\,\delta(B_\cdot-\cdot) \big\rangle_\mathcal{H}\right)\\
&=\EE\left[\int_{[a,b]^2}\int_{\RR^{2d}} D_{r,y}^W F\cdot Y_s \delta(B_s-z) |s-r|^{2H_0-2} q(y,z)dydz drds\right]\\
&=\EE\left[\int_{[a,b]^2}\int_{\RR^{d}} D_{r,y}^W F\cdot Y_s  |s-r|^{2H_0-2} q(y,B_s)dy drds\right].
\end{split}\end{equation}
Note that,
\begin{equation}\begin{split}
D_{r,y}^W F&= D_{r,y}^W \left(\int_a^b\int_{\RR^d} Y_s\delta(B_s-x) W(ds,x)dx \right)\\
&=\int_r^b\int_{\RR^d} D_{r,y}^WY_s \delta(B_s-x) W(ds,x)dx + Y_r\delta(B_r-y).
\end{split}\end{equation}
Substituting this computation into\eqref{F2split}  we have
\begin{equation}
\EE\left(\int_a^b Y_s W(ds,B_s)\right)^2=I_1+I_2\,.
\label{e.i12}
\end{equation}
For $I_2$, it is easy to deduce
\begin{equation}\begin{split}
I_2=\, &\EE\left[\int_{[a,b]^2}\int_{\RR^{d}}Y_r\delta(B_r-y)\cdot Y_s  |s-r|^{2H_0-2} q(y,B_s)dy drds\right]\\
 =\,&\EE\left[\int_{[a,b]^2}Y_r  Y_s  |s-r|^{2H_0-2} q(B_r,B_s)dy drds\right]\,.
\end{split}\end{equation}
 $I_1$ has the following expression
\begin{equation}\begin{split}\label{DWyse}
I_1=\,&\EE\left[\int_{[a,b]^2}\int_{\RR^{d}}\int_r^b\int_{\RR^d} D_{r,y}^WY_s \delta(B_s-x) W(ds,x)dx\cdot Y_s  |s-r|^{2H_0-2} q(y,B_s)drdsdy\right]\\
=\,&\EE\left[\int_{[a,b]^2}\int_{\RR^{d}} I_3  |s-r|^{2H_0-2} q(y,B_s)drdsdy\right]
\end{split}\end{equation}
where
\[
I_3=\displaystyle\int_r^b\int_{\RR^d} D_{r,y}^WY_s \delta(B_s-x) W(ds,x)dx\cdot Y_s\,.
\]
Using $F \cdot {W}(\phi)=\delta(F\phi)+\langle D^W F, \phi\rangle_{\mathcal{H}} $ again we have
\begin{equation}\begin{split}
I_3=\, &\EE^W\left[\int_r^b\int_{\RR^d} D_{r,y}^WY_s \delta(B_s-x) W(ds,x)dx\cdot Y_s\right] = \EE^W\big\langle D_{r,y}^W Y_\cdot \delta(B_\cdot-\cdot),D^W Y_s \big\rangle_\mathcal{H}\\
=\,&\EE^W\left[ \int_{[r,b]}\int_{[a,s]}\int_{\RR^{2d}}D_{r,y}^W Y_u \delta(B_u-x) D_{v,w}^W Y_s |u-v|^{2H_0-2} q(x,w)dxdwdv du\right]\\
=\,&\EE^W\left[ \int_{[r,b]}\int_{[a,s]}\int_{\RR^{d}}D_{r,y}^W Y_u \, D_{v,w}^W Y_s |u-v|^{2H_0-2} q(B_u,w)dwdv du\right]\,.
\end{split}\end{equation}
Substituting this back  to \eqref{DWyse} we obtain
\begin{equation}\label{eq5.13}
\begin{split}
I_2=\,&\EE\bigg[\int_{[a,b]^2}\int_{[r,b]}\int_{[a,s]}\int_{\RR^{2d}}D_{r,y}^W Y_u \, D_{v,w}^W Y_s |u-v|^{2H_0-2}\\
&\qquad\quad |s-r|^{2H_0-2} q(B_u,w) q(y,B_s)dw dy dudv drds\bigg]\,.
 \end{split}
\end{equation}
Inserting the expressions for $I_1$ and $I_2$ into \eqref{e.i12} yields the proposition.
\end{proof}

\end{document}